\newtheorem{theorem}{Theorem}[section]
\newtheorem{lemma}[theorem]{Lemma} 
\newtheorem{claim}{Claim} 
\newtheorem{proposition}[theorem]{Proposition} 
\newtheorem{conjecture}{Conjecture} 
\newtheorem{corollary}[theorem]{Corollary}
\theoremstyle{definition}
\newtheorem{definition}[theorem]{Definition}
\newtheorem{example}[theorem]{Example}
\newtheorem{remark}[theorem]{Remark}
\newtheorem*{remark*}{Remark}
\definecolor{dg}{rgb}{0.1,0.4,0.1}
\numberwithin{equation}{section}
\newcommand{\Z}{\mathbb{Z}}
\newcommand{\R}{\mathbb{R}}
\newcommand{\btw}{\overline{tw}}
\newcommand{\GaussD}{
\raisebox{-4mm}{
\begin{picture}(24,28)
\put(12,14){\circle{24}}
\put(9,0){$\ast$}
\put(4,5){\vector(1,1){17}}
\put(20,5){\vector(-1,1){17}}
\end{picture} 
}
}
\newcommand{\Larrow}{
\raisebox{-4mm}{
\begin{picture}(24,28)
\put(12,14){\circle{24}}
\put(9,0){$\ast$}
\put(24,14){\vector(-1,0){24}}
\end{picture} 
} 
}
\newcommand{\Rarrow}{
\raisebox{-4mm}{
\begin{picture}(24,28)
\put(12,14){\circle{24}}
\put(9,0){$\ast$}
\put(0,14){\vector(1,0){24}}
\end{picture} 
} 
}
\begin{document}

\title[chirally cosmetic surgeries on special alternating knots]{Special alternating knots with sufficiently many twist regions have no chirally cosmetic surgeries}

\author{Tetsuya Ito}
\address{Department of Mathematics, Kyoto University, Kyoto 606-8502, JAPAN}
\email{tetitoh@math.kyoto-u.ac.jp}

\subjclass[2020]{Primary~57K10,57K18,57K31}

\begin{abstract}
We show that a special alternating knot with sufficiently large number (more than $63$) of twist regions has no chirally cosmetic surgeries, a pair of Dehn surgeries producing orientation-reversingly homeomorphic $3$-manifolds. In the course of proof, we provide the optimal upper bounds of the primitive finite type invariants of degree 2 and 3 that solve Willerton's conjecture. 
\end{abstract}

\maketitle
\section{Introduction}

A pair of two Dehn surgeries on the same knot $K$ with inequivalent slopes are called \emph{chirally cosmetic} if they yield \emph{orientation-reversingly} homeomorphic $3$-manifolds. When a knot $K$ is amphicheiral, for any slope $r$, its $r$ surgery and $-r$ surgery are chirally cosmetic. As a more illuminating example, for each integer $m$, $\frac{2p^{2}(2m+1)}{p(2m+1)+1}$ and $\frac{2p^{2}(2m+1)}{p(2m+1)-1}$ surgeries on the $(2,p)$-torus knots are chirally cosmetic (see \cite{ro}, \cite[Appendix]{iis1}).

Currently no other examples of chirally cosmetic surgeries for knots in $S^{3}$ are known. Therefore it is natural to expect that there are no other chirally cosmetic surgeries on knots in $S^{3}$.

\begin{conjecture}
\label{conjecture:main}
A chirally cosmetic surgery on non-trivial knots in $S^{3}$ is either $\pm r$ surgeries of amphicheiral knots, or, $\frac{2p^{2}(2m+1)}{p(2m+1)+1}$ and $\frac{2p^{2}(2m+1)}{p(2m+1)-1}$ surgeries of $(2,p)$-torus knot ($m \in \Z$).
\end{conjecture}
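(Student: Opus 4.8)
The plan is to attack the conjecture through the low-degree finite type invariants together with the Casson--Walker invariant, which is the toolkit the present paper develops. Suppose $K(p/q)$ and $K(p/q')$ are chirally cosmetic, so there is an orientation-reversing homeomorphism between them. First I would extract the homological constraint: such a homeomorphism preserves $|H_1|$, which for $p/q$ surgery equals $|p|$, so the two slopes share the same numerator, and after fixing orientations we may write $S^3_{p/q}(K) \cong -S^3_{p/q'}(K)$.

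Next I would impose the Casson--Walker invariant $\lambda$, which changes sign under orientation reversal. Combining this with the surgery formula
\begin{equation}
\lambda(S^3_{p/q}(K)) = \frac{q}{p}\,a_2(K) + c(q,p),
\end{equation}
where $a_2(K)$ is the primitive degree-$2$ finite type invariant and $c(q,p)$ is a Dedekind-sum term independent of $K$, yields the single relation
\begin{equation}
\frac{q+q'}{p}\,a_2(K) = -\bigl(c(q,p)+c(q',p)\bigr).
\end{equation}
A second constraint comes from a degree-$3$ invariant: the primitive finite type invariant $v_3$ (equivalently a suitable derivative of the Jones polynomial) behaves predictably under mirroring, so comparing its contribution on the two sides of the orientation-reversing homeomorphism produces a second Diophantine relation among $p, q, q', a_2(K)$ and $v_3(K)$. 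Supplemented by Heegaard Floer correction-term estimates of Ni--Wu type, these relations should sharply restrict the admissible slopes.

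The optimal upper bounds on $a_2$ and $v_3$ advertised in the abstract (settling Willerton's conjecture) are what make the relations bite: for a knot with many twist regions these invariants grow in a controlled fashion, so the Diophantine relations above cannot hold unless the knot is amphichiral, giving the $\pm r$ case, or a $(2,p)$-torus knot, giving the explicit family.

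The main obstacle is the final classification step. The invariant equalities only constrain the tuple $(p,q,q',a_2(K),v_3(K))$, and a general knot may realize essentially arbitrary values of these low-degree invariants, so there is no purely invariant-theoretic route from the Diophantine constraints to membership in the amphichiral or torus-knot families. This is precisely why the full conjecture remains open, and why the present work restricts to special alternating knots, where the genus, signature, and finite type invariants are all tied to the number of twist regions, so that the numerical constraints can be pushed to an outright contradiction.
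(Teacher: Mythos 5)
You have not proved the statement, and the statement in question --- Conjecture \ref{conjecture:main} --- is not proved in the paper either: it is stated as an open conjecture, and the paper's actual contribution (Theorem \ref{theorem:main}) is only the special case of special alternating knots with $tw(K)>63$. Your proposal does assemble the right machinery: the Casson--Walker surgery formula expressing $\lambda(S^3_{p/q}(K))$ in terms of $a_2(K)$ and a Dedekind-sum term, the degree-three invariant $v_3$, and Heegaard Floer correction terms are precisely the inputs behind the obstructions quoted in Theorem \ref{theorem:obstruction} (imported from \cite{iis2,va2}). But those obstructions are \emph{necessary conditions} on the tuple $(p,q,q',a_2(K),v_3(K),\dots)$, whereas the conjecture asserts a \emph{classification}: every knot carrying a chirally cosmetic pair must be amphicheiral or a $(2,p)$-torus knot. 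Your proposal contains no argument for that step, and your final paragraph concedes exactly this (``there is no purely invariant-theoretic route from the Diophantine constraints to membership in the amphichiral or torus-knot families''). A writeup whose last step is an acknowledgement that the decisive implication cannot be obtained from the stated tools is an explanation of why the problem is open, not a proof of it.

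There is also a substantive directional error in the middle of your plan: you claim the optimal (Willerton) upper bounds on $a_2$ and $4v_3$ are what make the constraints ``bite'' for knots with many twist regions. The logic in the paper runs the other way. Upper bounds on $a_2$, $a_4$, $4v_3$ are useless by themselves; they must be played against a \emph{lower} bound on $\det(K)$ that grows exponentially in the twist number (Theorem \ref{theorem:det-key}), and the comparison in Theorem \ref{theorem:obstruction} only has teeth when $4v_3(K)\neq 0$ and, for part (ii), when $\sigma(K)=2g(K)$. Both facts are supplied by positivity of a special alternating diagram, via $0<g(K)\leq a_2(K)\leq 4v_3(K)$, and both fail for general knots: $4v_3$ vanishes for every amphicheiral knot (rendering the obstruction vacuous), and $a_2$, $v_3$ can take either sign, so these invariants do not ``grow in a controlled fashion'' with the twist number in any useful sense. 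Thus even the partial, non-classification portion of your plan does not run outside the positive/special alternating setting --- which is precisely why the paper restricts to that class.
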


The conjecture has been studied and verified for several classes of knots \cite{iis1,iis2,it1,it2,va1,va2} (see \cite{iis2} for comprehensive discussion), and there are some theoretical supporting evidences.

Nevertheless, we should remark that Conjecture \ref{conjecture:main} is a bit optimistic. There are examples of chirally cosmetic surgeries on a hyperbolic knot in a lens space \cite{bhw}, or chirally cosmetic surgeries on a hyperbolic knot in a hyperbolic 3-manifold \cite{ij}. Thus when we think about chirally cosmetic surgeries on knots in general 3-manifolds, we cannot expect an analogous conjecture.

In this paper, we study chirally cosmetic surgeries for a special alternating knot, a knot represented by a special alternating diagram. Here (up to mirror image), a special alternating diagram is an alternating diagram that is also positive (all the crossings are positive). In particular, special alternating knots are never amphicheiral. 
 
We show that, when a special alternating knot $K$ is sufficiently complicated in the sense that its twist number $tw(K)$ (see Section \ref{section:twist-number} for the definition) is large, then $K$ does not admit chirally cosmetic surgeries. 

\begin{theorem}
\label{theorem:main}
Let $K$ be a special alternating knot. If $tw(K)>63$ then $K$ does not admit chirally cosmetic surgeries.
\end{theorem}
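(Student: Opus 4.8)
The plan is to obstruct chirally cosmetic surgeries using finite-type (Vassiliev) invariants, since these are the standard tools that transform well under both Dehn surgery and orientation reversal. The key principle is that if $r$-surgery on $K$ and $r'$-surgery on $K$ yield orientation-reversingly homeomorphic manifolds, then certain surgery-invariant quantities built from low-degree finite type invariants of $K$ must satisfy rigid equations. Concretely, I would follow the Casson--Walker and the degree-$2$/degree-$3$ invariant approach developed in the work of Ito--Ichihara--Saito and others: the Casson--Walker invariant of the surgered manifold is a polynomial in the surgery coefficient whose coefficients are the primitive finite type invariants $v_2(K)$ and $v_3(K)$ (suitably normalized). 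Comparing the two surgeries under orientation reversal produces Diophantine-type constraints relating $v_2(K)$, $v_3(K)$, and the surgery slopes.

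First I would set up the surgery-coefficient polynomials explicitly. Writing the candidate chirally cosmetic slopes as $p/q$ and $p/q'$ (the homology constraint forces the same $|p|$), I would expand the Casson--Walker invariant and the relevant degree-$3$ correction term of the surgered manifolds as functions of the slope with coefficients $v_2(K)$ and $v_3(K)$. Orientation reversal flips the sign of the Casson--Walker invariant and of the degree-$3$ term, so equating the two surgeries yields equations of the form $v_3(K) \cdot (\text{slope data}) = \pm v_2(K) \cdot (\text{slope data})$. These reduce to bounding the ratio $v_3(K)/v_2(K)$ or showing that for the relevant slopes no integer solution exists unless $v_2$ and $v_3$ are small. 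This is precisely where the promised optimal upper bounds on the degree-$2$ and degree-$3$ primitive finite type invariants (the resolution of Willerton's conjecture announced in the abstract) enter as the crucial input.

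Second, and this is the heart of the argument, I would exploit positivity. Because $K$ is a special alternating knot, its diagram is positive, so the finite type invariants $v_2(K)$ and $v_3(K)$ are positive and grow with the complexity of the knot; in particular I expect lower bounds linear (or better) in the number of crossings or in $tw(K)$. On the other hand, Willerton's-conjecture bounds give upper bounds for $|v_3(K)|$ in terms of $v_2(K)$ — something like $|v_3(K)| \le C\, v_2(K)^{3/2}$ in general, but for special alternating knots I would seek a sharper bound tying $v_3$ tightly to $v_2$. Combined with the surgery equations, the mismatch between how the surgery constraints force $v_3/v_2$ to lie in a narrow range and how the genus/twist-number lower bounds push $v_2$ large gives a contradiction once $tw(K)$ exceeds an explicit threshold; carrying the constant through should produce the stated bound $tw(K)>63$.

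The hard part, I expect, is twofold. The first obstacle is establishing the optimal inequalities between $v_2(K)$ and $v_3(K)$ for special alternating knots with the correct constants — loose bounds would inflate the threshold well beyond $63$ or fail entirely, so the sharpness claimed in the abstract is essential and likely requires a careful combinatorial analysis of the Seifert graph (the checkerboard/Tait graph) associated to the special alternating diagram, using the fact that $v_2$ and $v_3$ can be read off from this planar graph. The second obstacle is that the Casson--Walker and degree-$3$ surgery equations may admit sporadic small solutions (as in the genuine torus-knot examples of Conjecture \ref{conjecture:main}); ruling these out requires separating the torus-knot family, which is harmless here since a $(2,p)$-torus knot has only one twist region and thus $tw$ far below $63$, so the twist-number hypothesis automatically excludes the known exceptional family and any near-miss small cases.
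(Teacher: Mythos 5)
Your high-level frame---obstruct chirally cosmetic surgeries via Casson--Walker/degree-$3$ surgery formulas, then exploit positivity of the special alternating diagram plus sharp bounds on the degree-$2$ and degree-$3$ primitive invariants---is indeed the paper's framework, except that the paper does not re-derive the surgery constraints: it imports them as a black box (for an alternating knot with $\sigma(K)=2g(K)$ and $4v_3(K)\neq 0$, chirally cosmetic surgeries are excluded once $\frac{7a_2(K)^2-a_2(K)-10a_4(K)}{4v_3(K)} \neq \frac{1}{2}\det(K)+3g(K)-\frac{5}{2}$). The first genuine gap is that the determinant is entirely absent from your plan, and it is the engine of the proof. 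The surgery constraint does not confine $v_3/v_2$ to a ``narrow range'' independent of the knot; it equates the finite-type ratio with a quantity containing $\frac{1}{2}\det(K)$, which itself grows with the knot. What converts the hypothesis ``$tw(K)$ large'' into the needed inequality is a lower bound for $\det(K)$ that is \emph{exponential} in the twist number (Stoimenow's $\det(K)\geq 2\gamma^{tw-1}$ with $\gamma\approx 1.425$, which the paper sharpens by adding crossing-number terms), pitted against upper bounds for the ratio that are only polynomial in $c(D)$: positivity gives $0<a_2(K)\leq 4v_3(K)$ (note this is a \emph{lower} bound on $v_3$, used to cancel one factor of $a_2$ in $a_2^2$---the opposite direction from the Willerton-type upper bound you invoke), whence the ratio is at most $7a_2(K)\leq \frac{7}{8}c(D)^2$. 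No combination of linear-in-$tw$ lower bounds for $v_2$ and upper bounds for $v_3$ can produce any threshold without this determinant input.

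The second gap is that even with the exponential determinant bound, the naive estimates fail in exactly the regime your proposal never addresses: when almost all crossings lie in a single twist region (density close to $1$). There $c(D)$ is unbounded while $tw(D)$ is fixed, so $\frac{7}{8}c(D)^2$ eventually dwarfs $2\gamma^{tw(D)-1}$ and the obstruction cannot be verified. The paper spends most of its effort here: it proves a refined determinant bound with terms quadratic in $c(D)$, and then splits according to whether the maximal twist region $R$ is coherent or incoherent. In the coherent case the twist region contains a $(2,c(R))$-torus-knot sub-Gauss-diagram, forcing $4v_3(K)\gtrsim c(R)^3$ and hence a much smaller ratio; in the incoherent case the arrows of $R$ are parallel in the Gauss diagram and contribute nothing, so the estimates reduce to a small ``base diagram''. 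The threshold $63$ emerges precisely from balancing these refined bounds against the refined determinant estimate; your proposal, lacking both the determinant and this concentration-of-crossings analysis, could not reach it. (Your last point---that the $(2,p)$-torus-knot family is excluded automatically since it has $tw=1$---does agree with the paper.)
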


Since a special alternating knot with $tw(K)=1$ is precisely the $(2,p)$-torus knot, our theorem says that when $K$ is far from the $(2,p)$-torus knot then it does no admit chirally cosmetic surgery. Compared with previous results, our theorem treat  more complicated and general knots so it provides a supporting evidence for Conjecture \ref{conjecture:main} having a different flavor.

We give the organization of the paper and an outline of the proof of Theorem \ref{theorem:main}.

In Section \ref{section:constraint}, we review constraints for knots to admit chirally cosmetic surgeries, based on finite type invariants $a_2(K)$, $v_3(K)$, $a_4(K)$ and determinant. 
 
In Section \ref{section:finite-type}, using Gauss diagram formulae, we review several estimates of finite type invariants.
Most of results are known, but here we provide some minor improvements or simplifications. Among them, we give the optimal upper bound of $a_2(K)$ (Proposition \ref{proposition:optimal-a_2}) and $v_3(K)$ (Proposition \ref{prop:v3-bounds}) which solve Willerton's conjectures \cite{wi}, \cite[Conjecture 2.11]{oh}.

Section \ref{section:determinant} is devoted to an estimate of the determinant of alternating knots. In \cite{st-det} Stoimenow gave an lower bound of determinant of alternating knots in terms of the twist numbers. We give a mild improvement that takes into account of crossing numbers.

In Section \ref{section:proof} we prove our main theorem by combining inequalities established so far. The estimates developed in previous sections immediately show that a special alternating knot $K$ does not admit chirally cosmetic surgeries unless a large portion of the crossings is concentrated in a single twist region. In  a remaining special case we have better estimate of finite type invariants that leads to the non-existence of chirally cosmetic surgeries.
 
Section \ref{section:alternating} provides a concise discussion for general alternating knot case. We prove similar non-existence result under some additional assumptions. We will briefly discuss a hyperbolic geometry approach that also provide a similar non-existence result of chirally cosmetic surgeries.

\section*{Acknowledgement}
The author is partially supported by JSPS KAKENHI Grant Numbers 19K03490, 21H04428.
The author wishes to express his gratitude to Kazuhiro Ichihara for many helpful and stimulating discussions.

\section{Chirally cosmetic surgery constraint}
\label{section:constraint}

In this section we quickly review sufficient conditions for alternating knots to admit no chirally cosmetic surgeries.

We denote by $a_n(K)$ the coefficient of $z^{n}$ in the Conway polynomial $\nabla_K(z)$ of a knot $K$. Let $v_3(K)$ the finite type invariant of degree three defined by 
\[ v_3(K)=-\frac{1}{144}V'''_K(1)-\frac{1}{48}V''_K(1) \in \frac{1}{4}\Z\]
where $V_K(t)$ is the Jones polynomial of $K$.
A non-standard normalization that $v_3(K) \in \frac{1}{4}\Z$ comes from an interpretation as a coefficient of Kontsevich invariant. In the following, we mainly use $4v_3(K)$ so that $4v_3(K) \in \Z$.
In a theory of finite type invariant, $a_2$ and $4v_3$ are often called the primitive finite type invariants of degree $2$ and $3$, respectively.

\begin{theorem}
\label{theorem:obstruction}
Let $K$ be an alternating knot such that $4v_3(K)\neq 0$.
\begin{itemize}
\item[(i)] \cite[Theorem 1.9]{iis2}
If 
\[ 0< \left| \frac{7a_2(K)^2-a_2(K)-10a_4(K)}{4v_3(K)}\right| \leq \frac{1}{2}\left( \det(K)-|\sigma(K)|-1\right)\]
 then $K$ has no chirally cosmetic surgeries.
\item[(ii)] \cite[Corollary 2.8]{va2} If $\sigma(K)=2g(K)$ and 
\[ \frac{7a_2(K)^2-a_2(K)-10a_4(K)}{4v_3(K)} \ne \frac{1}{2}\det(K) + 3g(K)-\frac{5}{2}\]
then $K$ has no chirally cosmetic surgeries, unless $K$ is not the $(2,p)$ torus knot.
\end{itemize}
Here $\sigma(K)$ denote the signature of $K$.
\end{theorem}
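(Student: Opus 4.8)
The plan is to argue both parts by contrapositive, with the logical target being that the \emph{existence} of a chirally cosmetic surgery forces the ratio to lie \emph{outside} the half-open window $(0,B]$. Precisely, assume $K$ is an alternating knot with $4v_3(K)\neq 0$ that does admit a chirally cosmetic surgery, and set
\[
R:=\left|\frac{7a_2(K)^2-a_2(K)-10a_4(K)}{4v_3(K)}\right|,\qquad B:=\tfrac{1}{2}\bigl(\det(K)-|\sigma(K)|-1\bigr).
\]
I will show that $R$ is forced to be either $0$ or strictly greater than $B$. Since $R\geq 0$ always, this is exactly the negation of $0<R\leq B$, which yields (i); (ii) will follow from the same scheme with a sharper terminal computation. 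So fix an orientation-reversing homeomorphism $S^3_{p/q}(K)\cong -\,S^3_{p/q'}(K)$ between the two surgered manifolds. First I would record the homological constraints: comparing $|H_1|$ forces a common numerator $p$, while matching the linking forms (which is negated under the orientation reversal) imposes a congruence on $q$ and $q'$ modulo $p$ and rules out the two slopes being equal.

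The core is to extract two numerical equations. Every orientation-sensitive invariant of $S^3_{p/q}(K)$ equals minus its value on $S^3_{p/q'}(K)$. I would apply this to (a) the Casson--Walker invariant $\lambda$ and (b) the degree-$3$ finite type invariant of rational homology spheres (the relevant piece of the LMO/Ohtsuki invariant). Their surgery formulae express $\lambda(S^3_{p/q}(K))$ through $\tfrac{q}{p}\,a_2(K)$ plus a Dedekind-sum term depending only on $(p,q)$, and express the degree-$3$ invariant through $\tfrac{q}{p}\cdot 4v_3(K)$ together with $(\tfrac{q}{p})^2$ times a fixed combination of $a_2(K)^2$ and $a_4(K)$, again plus purely arithmetic lens-space terms. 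Setting each invariant equal to minus its $q'$-counterpart produces two equations in the data $(p,q,q')$ and the knot invariants.

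Next I would eliminate: the Casson--Walker equation gives a first relation among $p,q,q'$ and $a_2(K)$, and substituting it into the degree-$3$ equation and clearing the lens-space terms produces an identity
\[
\frac{7a_2(K)^2-a_2(K)-10a_4(K)}{4v_3(K)}=F(p,q,q'),
\]
where $F$ is an explicit rational function and the division is legitimate because $4v_3(K)\neq 0$. Thus existence of the surgery forces $R=|F(p,q,q')|$ for some admissible slope triple.

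The hardest step, carrying the topological content, is the estimate $|F(p,q,q')|>B$ for every admissible chirally cosmetic triple with $F\neq 0$. Here I would combine the linking-form congruence between $q$ and $q'$ with sharp bounds, special to alternating knots, relating the admissible surgery slopes to $\det(K)$ and $\sigma(K)$; the appearance of the ``$-1$'' and the boundary case $B=0$ for the $(2,p)$-torus knots (which genuinely \emph{do} admit chirally cosmetic surgeries, so must not be excluded) indicate the estimate is optimal. Together with the identity above this shows a chirally cosmetic surgery forces either $7a_2(K)^2-a_2(K)-10a_4(K)=0$ or $R>B$, which is precisely (i). For (ii), the hypothesis $\sigma(K)=2g(K)$ lets me replace the crude slope estimate by a precise Heegaard Floer correction-term computation, which determines $F(p,q,q')$ \emph{exactly} rather than merely bounding it; any genuinely new chirally cosmetic pair must then realize the single value $\tfrac{1}{2}\det(K)+3g(K)-\tfrac{5}{2}$, so forbidding that value forbids such pairs. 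I expect this final slope-versus-determinant bound to be the main obstacle, since it is where the combinatorics of admissible surgeries and the alternating hypothesis must be used sharply.
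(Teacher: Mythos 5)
Your proposal reconstructs, in outline, the machinery that lies \emph{behind} this theorem, but it is worth noting first that the paper itself does not prove it: part (i) is quoted from \cite{iis2} and part (ii) from \cite{va2}, and the paper's only original contribution is the subsequent remark translating those results into the stated form (replacing $\tau$ by $\sigma$ via $\sigma(K)=2\tau(K)$ for alternating knots, and upgrading \cite{va2}, which only excludes pairs of opposite-sign slopes $r\neq -r'$, to all pairs). Your pipeline --- common order $p$ of $H_1$, anti-isometric linking forms, the Casson--Walker surgery formula, the degree-$3$ LMO/Ohtsuki equation, elimination to an identity $\frac{7a_2(K)^2-a_2(K)-10a_4(K)}{4v_3(K)}=F(p,q,q')$, then a lower bound on $|F|$ from Heegaard Floer correction terms --- is indeed the skeleton of the arguments in \cite{iis1,iis2,va2} for \emph{one} of the cases that must be treated, so that part of the plan is recognizably right.

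The genuine gap is that this single pipeline only covers chirally cosmetic pairs whose slopes $p/q$, $p/q'$ have opposite signs and satisfy $q+q'\neq 0$ (the $-$-type pairs of \cite{iis2}); two other cases exist, and your scheme structurally fails on both. First, antipodal pairs $r$ and $-r$: there $q+q'=0$, your elimination step divides by zero and $F$ is undefined; this case is instead excluded by a separate LMO-type argument of \cite{it1}, which is exactly where the hypothesis $4v_3(K)\neq 0$ is needed (in your write-up that hypothesis is only used to make a division legitimate, which is a warning sign). Second, same-sign ($+$-type) pairs: the correction-term estimates you invoke are only available when the two slopes have opposite signs, so there is no direct proof of $|F|>B$ in this case; the cited papers handle it by a completely different route, namely that a $+$-type pair forces $K$ to be an L-space knot \cite{os}, and the only alternating L-space knots are the $(2,p)$-torus knots \cite{os1}. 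This detour is precisely the origin of the $(2,p)$-torus-knot exception in (ii), and of the fact that for those knots the window in (i) is empty ($B=0$); without the trichotomy $-$-type / $+$-type / antipodal, the proof does not close. Finally, even in the $-$-type case, your ``hardest step'' $|F(p,q,q')|>\frac{1}{2}\left(\det(K)-|\sigma(K)|-1\right)$ is asserted rather than proved --- it is the main content of \cite{iis2} (via $d$-invariants of rational surgeries on homologically thin knots), and likewise the exact evaluation of $F$ under $\sigma(K)=2g(K)$ is the content of \cite{va2}, so as written the proposal defers the real work to exactly the results being cited.
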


We remark that for the mirror image $\overline{K}$ of $K$, $4v_3(K)=-4v_3(\overline{K})$. Thus the assumption that $4v_3(K)\neq 0$ implies that $K$ is not amphicheiral.

\begin{remark}
The original statements are stated under the weaker assumption that $K$ is homologically thin and use the tau-invariant $\tau(K)$. For alternating knots, $\sigma(K)=2\tau(K)$ so we use more familiar and elementary invariant $\sigma(K)$.
Furthermore, \cite[Corollary 2.8]{va2} only excludes chirally cosmetic surgeries such that the slopes $r$ and $r'$ has opposite signs and $r\neq -r'$ (in \cite{iis2} we call such a chirally cosmetic surgery \emph{$-$-type}).
Our assertion follows from the \cite[Corollary 2.8]{va2}, together with the following facts.
\begin{itemize}
\item If a knot $K$ admits chirally cosmetic surgeries of the same signs of slopes (in \cite{iis2} we call such a chirally cosmetic surgery \emph{$+$-type}) then $K$ is an L-space knot \cite[Theorem 9.8]{os}.
\item An alternating knot is an L-space knot if and only if it is the $(2,p)$-torus knot
\cite[Theorem 1.5]{os1}.
\item $v_3(K)\neq 0$ implies that $r$ surgery and $-r$ surgery are not chirally cosmetic \cite{it1}.
\end{itemize} 
\end{remark}

\section{Estimate of finite type invariants from diagrams}
\label{section:finite-type}

In this section we discuss several estimates of invariants $a_2$, $a_4$ and $4v_3$ that appears in Theorem \ref{theorem:obstruction}. In particular, we provide an optimal upper bounds of $a_2$ and $4v_3$ in terms of the crossing number.
 
\subsection{Gauss diagram}

Let $(S^{1},\ast)$ be an oriented circle $S^{1}$ with a based point $\ast$.
\begin{itemize}
\item A \emph{chord} is an unordered pair $\{u,v\}$ of distinct points of $S^{1} \setminus \{\ast\}$. 
\item An \emph{(unsigned) arrow} is an ordered pair $(o,u)$ of distinct points of $S^{1} \setminus \{\ast\}$. 
\item A \emph{signed arrow} $\mathbf{a}=((o,u),\varepsilon)$ is an arrow $(o,u)$ equipped with the sign $\varepsilon \in \{\pm\}$.  
\end{itemize}

For an arrow $(o,u)$ we call $o$ and $u$ the \emph{arrow tail} and the \emph{arrow head}, respectively.

\begin{definition}
\label{definition:Gauss-diagram}
A \emph{Gauss diagram} $G$ is a set of signed arrows $\{( (o_i,u_i),\varepsilon_i)\}$ of the based circle $(S^{1},\ast)$ whose endpoints are pairwise distinct. 
%The \emph{degree} $\deg(G)$ of a Gauss diagram $G$ is the number of arrows of $G$.
A \emph{chord diagram} $C$ is defined similarly. 
\end{definition}

Two Gauss diagrams $G$ and $G'$ are the same if there exists an orientation-preserving homomorphism $f:(S^{1},\ast) \rightarrow (S^{1},\ast)$ that sends an arrow $\mathbf{a}=((o,u),\varepsilon)$ of $G$ to an arrow $f(\mathbf{a})=((f(o),f(u)),\varepsilon)$ of $G'$.

In the following, we often view a (signed/unsigned) arrow simply as a chord by forgetting additional information.
Also, by Gauss diagram we will sometime allow the case where some arrows are not signed, or, just a chord. We call such an object \emph{weak Gauss diagram}, to distinguish them from honest Gauss diagrams as defined in Definition \ref{definition:Gauss-diagram}.

As usual, we express a (weak) Gauss diagram $G$ as a diagram consisting of a circle and signed arrows by drawing an arrow from $o$ to $u$ with sign $\varepsilon$, for each signed  arrow $((o,u),\varepsilon)$ of $G$.
In the diagram expression, we always regard the circle is oriented in a counter-clockwise direction.

For a knot diagram $D$ we take a base point $\ast$. One can assign the Gauss diagram $G_D$ as follows. We view the diagram $D$ as an immersion $\gamma:S^{1} \rightarrow \R^{2}$ sending the base point $\ast$ of $S^{1}$ to the base point $\ast$ of $D$. For each double point $c$ of $D$, we assign the signed arrow $((o(c),u(c)),\varepsilon(c))$, where $o(c)$ and $u(c)$ are the preimage of the over/under arcs at the crossing $c$, and $\varepsilon(c)$ is the sign of the crossing $c$.

 For a weak Gauss diagram $A$ and a Gauss diagram $G$,  the \emph{pairing} $\langle A, G \rangle$ is defined by
\[ \langle A, G \rangle = \sum_{G' \subset G, G'=A} \varepsilon(G') \]
where the summation runs the set of all sub Gauss diagrams $G'$ of $G$ such that $G'$ is equal to $A$, and $\varepsilon(G') \in \{\pm 1\}$ is the product of the signs of all arrows of $G'$. 
Here a \emph{sub Gauss diagram} $G'$ of $G$ is a Gauss diagram whose set of signed arrows is a subset of the set of signed arrows of $G$, and `$G'$ is equal to $A$' means that after forgetting additional information of $G$ if necessary (such as, by forgetting the signs, or, by regarding an arrow as a chord) $G$ becomes the same weak Gauss diagram $A$. The pairing is naturally extended for a formal linear combination $\mathcal{C}=\sum_{i=1}a_i A_i$ of weak Gauss diagrams as
\[ \langle \mathcal{C}, G \rangle =  \sum_{i=1} a_{i}\langle A_i, G \rangle \]

For every (integer-valued) finite type invariant $v$ of knots, there exists a formal linear combination of weak Gauss diagram $\mathcal{C}$ such that
\begin{equation}
\label{eqn:Gauss-formula}
 v(K) = \langle \mathcal{C}, G_D \rangle
\end{equation}
for any diagram $D$ of a knot $K$ \cite{gpv}.
We call \eqref{eqn:Gauss-formula} a \emph{Gauss diagram formula} of the finite type invariant $v$.

\subsection{Upper bounds for the coefficient of the Conway polynomial}

Although we only need estimates of $a_2(K)$, $a_4(K)$, we give an upper bound of the coefficients $a_{2n}(K)$ of  the Conway polynomial. This is a direct generalization of Polyak-Viro's argument \cite{pv2} for estimates of $a_2(K)$.

\begin{proposition}
\label{proposition:estimate-conway}
For a knot $K$ with crossing number $c$, 
\[  |a_{2n}(K)| \leq \frac{1}{2} \left( \binom{c}{2n} - 2 \binom{\frac{c}{2}}{2n} \right)\]
holds. In particular,
\[ a_2(K) \leq \frac{c^{2}}{8}\]
\end{proposition}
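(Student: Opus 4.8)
The plan is to realize $a_{2n}(K)$ through a Gauss diagram formula and then reduce the inequality to a purely combinatorial count of sub-configurations, exactly in the spirit of Polyak--Viro's treatment of $a_2$ in \cite{pv2}. First I would fix a diagram $D$ of $K$ with $c$ crossings, so that $G_D$ has $c$ signed arrows, and invoke a Gauss diagram formula $a_{2n}(K)=\langle \mathcal{C}_{2n},G_D\rangle$ in which the formal combination $\mathcal{C}_{2n}$ is supported on arrow diagrams that all share a single underlying chord diagram $P_{2n}$ with $2n$ chords (the ``core'' configuration detected by the Conway coefficient), differing only in their arrow orientations. Expanding the pairing, every contributing term has the form $\pm\,\varepsilon(c_{i_1})\cdots\varepsilon(c_{i_{2n}})=\pm 1$ and is indexed by a $2n$-subset of crossings whose arrows can be arranged to realize $P_{2n}$. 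Passing to absolute values and discarding all cancellation then gives
\[ |a_{2n}(K)| \le \tfrac{1}{2}\,\#\{\,2n\text{-subsets of the arrows of }G_D\text{ realizing the oriented pattern }P_{2n}\,\}, \]
so the whole problem is transferred to an extremal count of matching sub-configurations.

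The second step is that combinatorial count, which is where the binomial expression originates. The essential point is that the arrow \emph{directions} must be used: forgetting them would only yield the trivial bound $\binom{c}{2n}$, since (as one sees already for two chords) arbitrarily many chord pairs can be pairwise linked. Recording the over/under data splits the $2c$ endpoints on $(S^1,\ast)$ into exactly $c$ arrow-tails and $c$ arrow-heads, and it is this balance that forces the count down. I would show, by an extremal argument over all Gauss diagrams with $c$ arrows, that the number of $2n$-subsets coherently orientable to realize $P_{2n}$ is at most
\[ \binom{c}{2n}-2\binom{c/2}{2n}, \]
the subtracted term accounting for the subsets that cannot be oriented to match $P_{2n}$ and the factor $\tfrac12$ reflecting the head/tail symmetry that identifies a configuration with its orientation-reversal. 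Combining with the previous display yields the stated bound, and specializing to $n=1$ with $\binom{c}{2}-2\binom{c/2}{2}=c^2/4$ gives $a_2(K)\le c^2/8$.

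The delicate point, and the main obstacle, is precisely this oriented count: one must pin down the core arrow diagram $P_{2n}$, verify that each term of the expanded pairing is genuinely $\pm1$ and indexed as claimed, and then prove that the head/tail balance caps the number of coherent configurations at exactly $\tfrac12(\binom{c}{2n}-2\binom{c/2}{2n})$ for \emph{every} diagram rather than merely on average. Because the bound is asserted to be optimal (solving Willerton's conjecture), the extremal step cannot afford any slack, so I would locate the extremal diagram and check the estimate against the $(2,p)$-torus knot, whose standard $p$-crossing diagram has $a_2=(p^2-1)/8$, confirming that $a_2(K)\le c^2/8$ is essentially sharp and that the orientation-sensitive counting is lossless.
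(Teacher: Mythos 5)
Your high-level strategy --- realize $a_{2n}$ by a Gauss diagram formula and bound the pairing by counting arrow subsets --- is indeed the paper's strategy, but the three places where you defer the work are exactly where your stated plan breaks down. First, the formula you posit does not exist: in the Chmutov--Khoury--Rossi formula the paper uses, $\mathcal{C}_{2n}$ is the sum of arrow diagrams $A_C$ over \emph{all} connected chord diagrams $C$ of degree $2n$, and for $n\ge 2$ these have many different underlying chord diagrams, not one core pattern $P_{2n}$. (What rescues the count is that distinct connected $C$ give distinct $A_C$, so each $2n$-subset of arrows of $G_D$ still matches at most one term.) Second, your factor $\tfrac12$ is unsupported: a subset of arrows of $G_D$ is what it is --- its ``orientation-reversal'' is generally not another subset of $G_D$ --- so discarding cancellation only gives $|a_{2n}(K)|\le \#\{\text{matching subsets}\}$, twice the claimed bound. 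The paper manufactures the $\tfrac12$ by symmetrizing through the mirror image: since $a_{2n}(K)=a_{2n}(\overline{K})$, one has $a_{2n}(K)=\tfrac12\langle \mathcal{C}_{2n}+\overline{\mathcal{C}_{2n}},G_D\rangle$ (bar meaning all arrows reversed), and because in every $A_C$ the two arrows adjacent to the base point have their \emph{tails} next to $\ast$, no $A_C$ coincides with any $\overline{A_{C'}}$; hence each subset pairs nontrivially with at most one of the two halves, contributing at most $1$ to $2a_{2n}(K)$.

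Third, the mechanism you propose for the subtracted term $2\binom{c/2}{2n}$ --- the balance of $c$ heads and $c$ tails among the $2c$ endpoints --- cannot work, since that balance holds automatically for every Gauss diagram and carries no information. The paper instead partitions the $c$ arrows into \emph{left} and \emph{right} arrows (according to the direction of the arrow relative to the base point), of sizes $c_{\leftarrow}+c_{\rightarrow}=c$; connectivity of $C$ forces every $A_C$ (and every $\overline{A_C}$) to contain at least one arrow of each kind, so monochromatic subsets pair to zero, leaving at most $\binom{c}{2n}-\binom{c_{\leftarrow}}{2n}-\binom{c_{\rightarrow}}{2n}$ contributing subsets; convexity then gives $\binom{c_{\leftarrow}}{2n}+\binom{c_{\rightarrow}}{2n}\ge 2\binom{c/2}{2n}$. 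Note that $c_{\leftarrow}$ and $c_{\rightarrow}$ need not be equal, so there is no ``extremal diagram'' to locate: the displayed bound is simply the worst case of the convexity step, uniformly over all diagrams. Finally, the sharpness you appeal to belongs to Proposition \ref{proposition:optimal-a_2} (the refined bound $(c^2-1)/8$, which is where Willerton's conjecture is settled), not to this proposition; the bound $c^2/8$ itself is never attained, so no lossless extremal analysis is required here.
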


We also observe the following estimate

\begin{proposition}
\label{proposition:a_2-vs-a_4}
If $D$ is a positive diagram with $c$ crossings,
\[ a_{2n}(K)\leq \frac{1}{(2n-2)!}c^{2n-2} a_2(K)\]
\end{proposition}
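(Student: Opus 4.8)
The plan is to re-run the Polyak--Viro Gauss diagram machinery already used for Proposition \ref{proposition:estimate-conway}, but to exploit the positivity of the diagram to convert the pairings into honest non-negative counts, and then to bound the degree-$2n$ count by the degree-$2$ count times the number of ways of inserting the extra $2n-2$ chords. First I would fix a positive diagram $D$ with $c$ crossings and pass to its Gauss diagram $G_D$, every arrow of which carries the sign $+$. Expressing the Conway coefficients through Gauss diagram formulae as in the proof of Proposition \ref{proposition:estimate-conway}, each $a_{2m}(K)$ is a pairing $\langle \mathcal{C}_{2m}, G_D\rangle$ with a fixed linear combination $\mathcal{C}_{2m}$ of chord diagrams on $2m$ chords. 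Since all arrows of $G_D$ are positive, every relevant sub-diagram $G'$ contributes $\varepsilon(G')=+1$, so $a_{2m}(K)$ reduces to a non-negative integer counting the sub-configurations of $G_D$ realizing the prescribed pattern(s).

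In particular I would write $a_2(K)=\#\mathcal{S}_2$ and $a_{2n}(K)=\#\mathcal{S}_{2n}$, where $\mathcal{S}_{2m}$ denotes the set of $2m$-element subsets of the crossings of $D$ whose arrows form the degree-$2m$ pattern (for $a_2$ this is the single Polyak--Viro linked pair). The combinatorial heart of the argument is the claim that every configuration $S\in\mathcal{S}_{2n}$ contains a sub-pair $\{x,y\}\subset S$ lying in $\mathcal{S}_2$; that is, the degree-$2n$ chord pattern restricts, on some two of its chords, to the linked pair computing $a_2$.

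Granting this claim, I would choose for each $S$ a canonical pair $P(S)\in\mathcal{S}_2$ with $P(S)\subset S$ (for instance the lexicographically smallest such pair). The assignment $S\mapsto\bigl(P(S),\,S\setminus P(S)\bigr)$ is injective, since $S$ is recovered as $P(S)\cup(S\setminus P(S))$, and $S\setminus P(S)$ is a set of $2n-2$ crossings disjoint from $P(S)$. Counting images then yields
\[ a_{2n}(K)=\#\mathcal{S}_{2n}\le \#\mathcal{S}_2\cdot\binom{c-2}{2n-2}\le a_2(K)\cdot\frac{c^{2n-2}}{(2n-2)!}, \]
where the last step uses that $\binom{c-2}{2n-2}$ is a product of $2n-2$ factors each at most $c$, divided by $(2n-2)!$. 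This is exactly the asserted inequality, and it specializes correctly at $n=1$ to the identity $a_2(K)\le a_2(K)$.

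The main obstacle is the combinatorial claim of the second paragraph: I must read off the explicit degree-$2n$ chord pattern (or linear combination of patterns) from the Conway Gauss diagram formula and verify that it always contains the $a_2$ linked pair as a sub-pattern. A secondary point to confirm is that positivity genuinely collapses $\langle\mathcal{C}_{2m},G_D\rangle$ to a single non-negative count rather than a signed difference; this should hold because on an all-positive Gauss diagram the contributing patterns enter with non-negative coefficient, consistent with $a_{2m}(K)\ge 0$ for positive knots. Should isolating one distinguished pair prove awkward, I would fall back on an inductive route, establishing $a_{2n}(K)\le \frac{c^2}{(2n-2)(2n-3)}\,a_{2n-2}(K)$ by inserting two chords at a time and iterating down to $a_2(K)$, which telescopes to the same constant $\frac{1}{(2n-2)!}c^{2n-2}$.
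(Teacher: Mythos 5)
Your overall strategy coincides with the paper's: use the Chmutov--Khoury--Rossi formula $a_{2n}(K)=\langle \mathcal{C}_{2n},G_D\rangle$, observe that positivity of $D$ turns the pairing into a genuine non-negative count of sub-diagrams, show each degree-$2n$ configuration contains a sub-pair realizing the $a_2$ pattern, and count via the injection $S\mapsto (P(S),S\setminus P(S))$ to get $a_{2n}(K)\le a_2(K)\binom{c-2}{2n-2}\le \frac{1}{(2n-2)!}c^{2n-2}a_2(K)$. The counting scaffolding is fine. However, the claim you yourself flag as ``the main obstacle'' --- that every degree-$2n$ pattern occurring in the Conway formula contains the linked pair $\GaussD$ as a sub-pattern --- is precisely the entire mathematical content of the proposition, and you do not prove it. It is not a generic fact about chord patterns (a diagram of $2n$ pairwise non-crossing chords contains no linked pair at all); it holds only because the patterns in $\mathcal{C}_{2n}$ are the arrow diagrams $A_C$ of \emph{connected} chord diagrams, with arrows oriented by first passage along the doubled curve $X_C$ starting at $\ast$. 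The paper's argument is: by that construction the two endpoints adjacent to the base point are arrow tails, so the two arrows adjacent to $\ast$ either cross --- giving $\GaussD$ directly --- or are parallel, in which case connectedness of $C$ forces some left arrow to intersect the base-point-adjacent right arrow, and such a crossing pair is again of type $\GaussD$. Without this (or some equivalent) use of connectedness and the base-point convention, the inequality does not follow.

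Your proposed fallback is also not a repair: the inductive step $a_{2n}(K)\le \frac{c^2}{(2n-2)(2n-3)}a_{2n-2}(K)$ would require that deleting two chords from a degree-$2n$ pattern yields a pattern appearing in $\mathcal{C}_{2n-2}$, but a connected chord diagram minus two chords need not be connected, so the restricted configuration may simply not be counted by $a_{2n-2}$. The reduction directly to degree $2$ is the one that works, exactly because the degree-$2$ connected pattern is forced by connectivity plus the tails-at-the-base-point property; that verification is the missing step you need to supply.
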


To prove theorem, we review a Gauss diagram formula of $a_{2n}$. A chord diagram $C$ is \emph{connected} if the curve $X_C$ obtained by taking parallel doubling at each chord 
as 
\[
\raisebox{-2mm}{
\begin{picture}(40,20)
\multiput(0,10)(2,0){20}
    {\line(1,0){1}}
\put(0,0){\line(0,1){20}}
\put(40,0){\line(0,1){20}}
\end{picture} } 
\rightarrow
\raisebox{-2mm}{
\begin{picture}(40,20)
\put(0,0){\line(0,1){5}}
\put(0,5){\line(1,0){40}}
\put(40,0){\line(0,1){5}}
\put(0,15){\line(0,1){5}}
\put(0,15){\line(1,0){40}}
\put(40,15){\line(0,1){5}}
\end{picture} } 
\]
is connected. For a connected chord diagram $C$ having $2n$ chords, we assign a weak Gauss diagram $A_C$ as follows. From the based point $\ast$, we walk along the circle $X_C$. When we first pass the portion of the circle coming from a chord $\{o,u\}$, we assign the direction according to the orientation of $X_C$.

Using the notion of connected chord diagram, the coefficient of the Conway polynomial is described as follows: 
\begin{theorem}\cite{ckr}
Let $\mathcal{C}_{2n}= \sum_{C} A_C$ be a linear combination of unsigned Gauss diagram, where the summation runs over the set of all connected chord diagrams $C$ with degree $2n$. Then for a diagram $D$ of a knot $K$,
\[ a_{2n}(K) = \langle \mathcal{C}_{2n}, G_D \rangle \]
\end{theorem}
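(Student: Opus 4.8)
The plan is to realize $\nabla_K(z)$ as a determinant whose entries are read directly off the Gauss diagram $G_D$, and then to expand that determinant so that the surviving monomials are indexed exactly by the connected sub-chord-diagrams carrying the ascending decoration.

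First I would pass from $D$ to a Seifert surface via Seifert's algorithm and invoke the classical identity $\nabla_K(z) = \det\!\left(t^{1/2}V - t^{-1/2}V^{T}\right)$ with $z = t^{1/2}-t^{-1/2}$, where $V$ is the resulting Seifert matrix. The essential observation is that the entries of $V$ and of the intersection form $V - V^{T}$ are linking numbers that are computed by interlacements of crossings, so up to a change of basis the whole pencil can be re-encoded by a matrix $M = M(G_D)$ indexed by the chords of $G_D$, each entry being an interlacement indicator of two chords weighted by a sign coming from the $\varepsilon(c)$. Setting up this dictionary between the Seifert data and the interlacement data of $G_D$ is the first technical point, and it is where the based circle and the arrow directions must be tracked carefully.

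Next I would extract the coefficient of $z^{2n}$ from the determinant and expand it combinatorially over the chords. The aim is to show that, after expansion, the contributing terms are precisely the sub-Gauss-diagrams $G'\subseteq G_D$ on $2n$ chords whose underlying chord diagram $\overline{G'}$ is connected and whose arrows agree with the ascending pattern $A_{\overline{G'}}$, each weighted by $\varepsilon(G')$; summing over connected $C$ this is exactly $\langle \mathcal{C}_{2n}, G_D\rangle$. Connectedness should enter through the cycle/component structure of the permutation expansion: a chord that interlaces nothing in the chosen subset should force a vanishing factor, and more generally I expect the expansion to organize so that only subsets whose interlacement graph is connected—equivalently, whose doubling $X_{\overline{G'}}$ is a single circle—survive, with the walk along that single circle producing the ascending orientation and hence fixing the arrow directions of $A_{\overline{G'}}$.

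The main obstacle, and the step I would spend the most effort on, is precisely this sign-and-connectedness bookkeeping: verifying that the permutation signs in the determinant expansion, combined with the diagonal conventions of $M$, reproduce the ascending arrow pattern (and not its reverse) together with the sign product $\varepsilon(G')$, and that the disconnected configurations genuinely cancel or reorganize rather than contributing spurious monomials. As sanity checks I would first run the argument for $n=1$ to recover Polyak--Viro's formula for $a_2$, where the connected degree-$2$ diagrams are exactly the single interlacing pair, and I would note that diagram-independence of the final formula is automatic since the left-hand side $a_{2n}(K)$ is already a genuine knot invariant. As an alternative route I would keep in reserve an induction on the number of crossings that checks both sides obey the Conway skein relation, at the cost of first extending the Gauss-diagram pairing to links so that the oriented smoothing can be handled.
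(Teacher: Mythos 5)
You should first be aware that the paper you are working against contains no proof of this statement at all: it is quoted from \cite{ckr}, where it is established by skein-theoretic/state-sum methods (as a simplification of a special case of Jaeger's state model for the HOMFLYPT polynomial, in the same circle of ideas as \cite{cp}), not via Seifert forms. Judged on its own terms, your proposal has two genuine gaps. The first is the step you call the ``dictionary'': re-encoding the pencil $t^{1/2}V-t^{-1/2}V^{T}$, up to change of basis, as a matrix $M(G_D)$ indexed by the chords of $G_D$ with interlacement-indicator entries. This is asserted, not established, and there are concrete obstructions to it: for a knot diagram with $c(D)$ crossings and $s(D)$ Seifert circles the Seifert matrix has size $c(D)-s(D)+1$, so it is not even of a size that could be indexed by chords, and no change of basis repairs a size mismatch; moreover its entries are linking numbers of cycles that run through several crossings, and these are not functions of pairwise chord interlacement. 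Since every later step expands this hypothetical matrix, the argument has no foundation as written; an honest version would have to start from a genuinely crossing-indexed presentation (for instance a Wirtinger/Fox-calculus Alexander matrix) and then carry out all the combinatorics there, which is exactly the work the proposal defers.

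The second gap is that your combinatorial target is provably misidentified. You assert that connectedness of the interlacement graph of a sub-chord-diagram is \emph{equivalent} to its doubling $X_{C}$ being a single circle, but the theorem sums over the latter class, and the two notions differ in both directions. By the Cohn--Lempel lemma (the mechanism behind Zulli's trip matrix), the number of circles of $X_C$ equals one plus the nullity over $\mathrm{GF}(2)$ of the interlacement matrix of $C$. For three pairwise-interlacing chords the interlacement graph is a connected triangle, yet the interlacement matrix has even determinant, so $X_C$ has two components (one can also trace this by hand). Conversely, for two disjoint interlacing pairs of chords the interlacement graph is disconnected, yet the interlacement matrix is nonsingular over $\mathrm{GF}(2)$ and $X_C$ is a single circle; chord diagrams of this type genuinely contribute to $a_4$. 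So an expansion organized by interlacement-graph connectivity would both retain spurious terms and omit necessary ones. Finally, the route you keep ``in reserve'' --- extending the pairing to link diagrams, proving it satisfies the Conway skein relation, and establishing a vanishing/normalization statement on descending diagrams as the base of an induction --- is in fact the viable approach and the one closest in spirit to the proof in \cite{ckr}; but those three lemmas are precisely the substance of the theorem, and the proposal develops none of them.
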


Thus $a_{2n}(K)\geq 0$ for a positive knot $K$. Indeed, recently it is shown that for a positive knot $K$ and for $n = 1,\ldots,g(K)$ \cite{is}, a stronger inequality
\begin{equation}
\label{eqn:a2n-positive}
a_{2n}(K)\geq \binom{g(K)}{n} >0   
\end{equation}
holds.

\begin{example}[Gauss diagram formula of $a_2$]
 There are three chord diagrams 
\[ C= \raisebox{-4mm}{
\begin{picture}(24,28)
\put(12,14){\circle{24}}
\put(9,0){$\ast$}
\put(21,22){\line(-1,-1){17}}
\put(3,22){\line(1,-1){17}}
\end{picture} 
}, C'=
\raisebox{-4mm}{
\begin{picture}(24,28)
\put(12,14){\circle{24}}
\put(9,0){$\ast$}
\put(19,5){\line(0,1){19}}
\put(5,5){\line(0,1){19}}
\end{picture}
},
C''=
\raisebox{-4mm}{
\begin{picture}(24,28)
\put(12,14){\circle{24}}
\put(9,0){$\ast$}
\put(1,10){\line(1,0){22}}
\put(1,18){\line(1,0){22}}
\end{picture}
}
\]
of degree two. $C$ is connected whereas $C'$ and $C''$ are not. From the connected chord diagram $C$ we get a weak Gauss diagram \GaussD so 
\begin{equation}
\label{eqn:a2}
a_{2}(K)= \langle \GaussD, G_D \rangle.
\end{equation}
\end{example}

We remark that Gauss diagram formula is far from unique. For example, using the fact that $a_2(K)$ is invariant under the mirror image, $a_2(K)$ has a different Gauss diagram formula
\begin{equation}
\label{eqn:a2-formula}
a_2(K) =  \langle
\raisebox{-4mm}{
\begin{picture}(24,28)
\put(12,14){\circle{24}}
\put(9,0){$\ast$}
\put(21,22){\vector(-1,-1){17}}
\put(3,22){\vector(1,-1){17}}
\end{picture} 
}, G_D \rangle. \end{equation}

\begin{proof}[Proof of Proposition \ref{proposition:estimate-conway}]
For an unsigned Gauss diagram $A$, we denote by $\overline{A}$ the Gauss diagram obtained by reversing all the arrows.

Let $\overline{D}$ be the mirror image of $D$. Then the Gauss diagram $G_{\overline{D}}$ of $\overline{D}$ is obtained from $G_{D}$ by reversing all the arrows and signs.
Thus
\[a_{2n}(K)= a_{2n}(\overline{K}) = \langle \mathcal{C}_{2n},G_{\overline{D}} \rangle =  \langle \overline{\mathcal{C}_{2n}}, G_{D} \rangle \]
Here $\overline{\mathcal{C}_{2n}} = \sum_{C} \overline{A_{C}}$.
Thus we conclude
\[ a_{2n}(K)=\frac{1}{2} \langle  \mathcal{C}_{2n} + \overline{\mathcal{C}_{2n}},G_{D} \rangle. \]

We say that an arrow of a Gauss diagram is a \emph{left arrow} (resp. a \emph{right arrow}) if it is of the form $\Larrow$ (resp. $\Rarrow$).
We denote by  $c_{\leftarrow}$ and $c_{\rightarrow}$ the number of left arrow and right arrows of $G_D$, respectively.

By the construction, the unsigned arrow diagram $A_C$ from a connected chord diagram $C$ has the following properties:
\begin{itemize}
\item[(i)] The endpoints of two arrows adjacent to the base point $\ast$ are arrow tails. 
\item[(ii)] $A_C \neq A_{C'}$ if two chord diagrams $C$, $C'$ are different. 
\item[(iii)] $A_C$ contains at least one left arrows and at least one right arrow. 
\end{itemize}

The property (i) says that $A_C \neq \overline{A_{C'}}$ for any connected chord diagrams $C$ and $C'$. Thus for each sub Gauss diagram $G'$ of $G_D$ having $2n$ chords,
$|\langle \mathcal{C}_{2n}+\overline{\mathcal{C}_{2n}}, G' \rangle| \leq 1$.
Moreover, the property (iii) says that  $|\langle \mathcal{C}_{2n}, G' \rangle| =0$ if all the arrows of $G'$ are left arrows or right arrows.

The number of sub Gauss diagrams of degree $2n$ having at least one left arrow and one right arrow is $\binom{c}{2n} -\binom{c_{\leftarrow}}{2n} - \binom{c_{\rightarrow}}{2n}$.
Therefore
\begin{align*}
a_{2n}(K) &\leq \frac{1}{2}\left(\binom{c}{2n} -\binom{c_{\leftarrow}}{2n} - \binom{c_{\rightarrow}}{2n} \right) \\
& \leq \frac{1}{2} \left( \binom{c}{2n} - 2 \binom{\frac{c}{2}}{2n} \right) 
\end{align*}

\end{proof}

In this opportunity we point out the following best-possible estimate\footnote{This was implicit in \cite[Remark 1.G]{pv2}, though they did not state explicitly.} of $a_2(K)$, which was conjectured by Willerton \cite{wi} (we remark that for the $(2,p)$-torus knot $K$ $a_2(K)=\frac{c^{2}-1}{8})$.

\begin{proposition}[Optimal bound of $a_2(K)$]
\label{proposition:optimal-a_2}
For a non-trivial knot $K$,
\[ a_2(K) \leq \frac{c(K)^{2}-1}{8} \]
\end{proposition}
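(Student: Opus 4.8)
The plan is to refine the very last step in the proof of Proposition \ref{proposition:estimate-conway}. Specialized to $n=1$, that argument shows, for a diagram $D$ of $K$ with $c$ crossings and a chosen base point $\ast$,
\[ a_2(K) = \tfrac12\langle \GaussD + \GaussDrev, G_D\rangle \le \tfrac12\left(\binom{c}{2}-\binom{c_\leftarrow}{2}-\binom{c_\rightarrow}{2}\right) = \tfrac12\, c_\leftarrow c_\rightarrow, \]
since $c_\leftarrow + c_\rightarrow = c$. The only slack in the original estimate $a_2 \le c^2/8$ is the bound $c_\leftarrow c_\rightarrow \le c^2/4$, which is sharp exactly when $c_\leftarrow = c_\rightarrow = c/2$. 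The point is that the counts $c_\leftarrow, c_\rightarrow$ depend on where $\ast$ sits, whereas $a_2(K)$ does not: I would first record that the Gauss diagram formula \eqref{eqn:a2} is valid for every placement of the base point (a standard feature of this formula; alternatively one checks directly that $\langle \GaussD, G_D\rangle$ is unchanged as $\ast$ crosses an arrow endpoint). Consequently $a_2(K) \le \tfrac12\min_{\ast} c_\leftarrow(\ast)\,c_\rightarrow(\ast)$, and it suffices to produce one base point with $c_\leftarrow \ne c/2$.

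The key new observation is that $c_\leftarrow$ cannot remain equal to $c/2$ at every base point. As $\ast$ travels once around the circle it crosses each of the $2c$ arrow endpoints exactly once, and each such crossing reverses the left/right type of the unique chord incident to that endpoint, hence changes $c_\leftarrow$ by exactly $\pm 1$. Over the full loop each chord is flipped precisely twice and returns to its original type, so among the $2c$ steps there are exactly $c$ increments and $c$ decrements of $c_\leftarrow$. Because $K$ is non-trivial, $c \ge 1$, so both an up-step and a down-step occur; thus $c_\leftarrow$ is a non-constant function of $\ast$ and attains at least two distinct integer values.

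Combining these two inputs finishes the proof. If $c$ is odd, then $c_\leftarrow \ne c_\rightarrow$ at every base point, so $c_\leftarrow c_\rightarrow \le \tfrac{c-1}{2}\cdot\tfrac{c+1}{2} = \tfrac{c^2-1}{4}$. If $c$ is even, the non-constancy of $c_\leftarrow$ yields a base point with $c_\leftarrow \ne c/2$, and then $c_\leftarrow c_\rightarrow = \tfrac{c^2}{4}-\bigl(c_\leftarrow - \tfrac{c}{2}\bigr)^2 \le \tfrac{c^2}{4}-1 \le \tfrac{c^2-1}{4}$. In both cases $\min_\ast c_\leftarrow c_\rightarrow \le \tfrac{c^2-1}{4}$, and therefore $a_2(K) \le \tfrac{c^2-1}{8}$.

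I expect the main obstacle to be the even crossing-number case. There the naive bound $c^2/8$ is itself an integer, so no parity argument on $a_2$ alone can shave it down, and the entire improvement rests on exploiting the freedom in the base point. The two places needing care are therefore (a) the base-point independence of the value of \eqref{eqn:a2}, which justifies optimizing the right-hand side over $\ast$, and (b) the step-counting that certifies $c_\leftarrow$ genuinely varies (equal numbers of up- and down-steps as $\ast$ goes around). As consistency checks I would place $\ast$ generically away from all endpoints, and confirm sharpness against the extremal family: for the $(2,p)$-torus knot $c=p$ is odd and the bound $\tfrac{p^2-1}{8}$ is attained.
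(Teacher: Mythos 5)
Your proof is correct, and in the decisive case (even crossing number) it takes a genuinely different route from the paper's. Both arguments start from the same specialization of Proposition \ref{proposition:estimate-conway}: for any based diagram, $a_2(K)\le \tfrac12\bigl(\tbinom{c}{2}-\tbinom{c_{\leftarrow}}{2}-\tbinom{c_{\rightarrow}}{2}\bigr)=\tfrac12\,c_{\leftarrow}c_{\rightarrow}$, and both settle odd $c$ the same way (the paper also notes that case is anyway immediate from $a_2(K)\in\Z$). For even $c$, the paper keeps the base point fixed and brings in a realizability constraint: in the Gauss diagram of a knot diagram every arrow intersects an even number of other arrows, so the number of intersecting pairs is at most $\tfrac{c(c-2)}{2}$, which leads to $a_2(K)\le\tfrac{c^2-2c}{8}$. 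You instead exploit the freedom of the base point: since formula \eqref{eqn:a2} computes $a_2(K)$ for every based diagram of $K$, you may minimize $\tfrac12\,c_{\leftarrow}c_{\rightarrow}$ over positions of $\ast$, and your step-counting argument --- each of the $2c$ endpoint passages flips the type of exactly one arrow, so a full loop consists of exactly $c$ up-steps and $c$ down-steps of $c_{\leftarrow}$ --- correctly shows that $c_{\leftarrow}$ cannot be identically $c/2$, whence some base point gives $a_2(K)\le\tfrac{c^2-4}{8}$. Both bounds are below $\tfrac{c^2-1}{8}$, so either argument proves the proposition; the paper's constant is marginally sharper ($c^2-2c\le c^2-4$ for $c\ge 2$), while yours needs less input: no parity property of realizable Gauss diagrams, only the fact (standard, and correctly invoked --- moving $\ast$ along $D$ produces another based diagram of the same knot, to which the formula applies) that the Gauss diagram formula is valid for every placement of the base point, together with elementary counting. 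The two points you flag as delicate, base-point independence and the equal number of up- and down-steps per loop, are both sound as you state them.
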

\begin{proof}
This is done by a slightly careful re-examination of the proof of Proposition \ref{proposition:estimate-conway}.
Let $D$ be a diagram of $K$ with $c=c(K)$ crossings.

First assume that $c(K)$ is odd. Although this case the assertion is obvious because $a_2(K)$ is integer, here we give a slightly different argument that can be used for general cases. Since $c_{\leftarrow}$ and $c_{\rightarrow}$ are integers 
\begin{align*}
a_{2}(K) &
\leq \frac{1}{2}\left(\binom{c}{2} -\binom{c_{\leftarrow}}{2} - \binom{c_{\rightarrow}}{2} \right) \\
& \leq \frac{1}{2} \left(  \binom{c}{2} - \binom{\frac{c+1}{2}}{2} - \binom{\frac{c-1}{2}}{2} \right) = \frac{c^{2}-1}{8}.
\end{align*}
Next we assume that $c(K)$ is even. Recall that for a Gauss diagram $G_D$ from knot diagram, for each arrow $\mathbf{a}$, the number of arrows that intersects with $\mathbf{a}$ is even. Thus when $c(K)$ is even, the number of sub (weak) Gauss diagrams of $G_D$ of the form $\raisebox{-4mm}{
\begin{picture}(24,28)
\put(12,14){\circle{24}}
\put(9,0){$\ast$}
\put(21,22){\line(-1,-1){17}}
\put(3,22){\line(1,-1){17}}
\end{picture} 
}$ is at most $\frac{c(c-2)}{2}$. Therefore we conclude 
\begin{align*}
a_{2}(K) &
\leq \frac{1}{2}\left(\frac{c(c-2)}{2} -\binom{c_{\leftarrow}}{2} - \binom{c_{\rightarrow}}{2} \right) \\
& \leq \frac{1}{2} \left( \frac{c(c-2)}{2} - 2 \binom{\frac{c}{2}}{2} \right) = \frac{c^{2}-2c}{8} < \frac{c^{2}-1}{8}
\end{align*}
\end{proof}

\begin{remark}
A similar argument can be used to give a slight improvement of Proposition \ref{proposition:estimate-conway}, but it looks to be far from optimal. Although exploring the optimal upper bound of $a_4$ or $a_{2n}$ is of independent interest (see \cite[Problem 1.17]{oh}), we do not pursue this.
\end{remark}

\begin{proof}[Proof of Proposition \ref{proposition:a_2-vs-a_4}]

For a connected Gauss diagram $C$ and its unsigned Gauss diagram $A_C$, two arrows adjacent to the base point forms a sub Gauss diagram of the form either $\GaussD$ or $\raisebox{-4mm}{
\begin{picture}(24,28)
\put(12,14){\circle{24}}
\put(9,0){$\ast$}
\put(4,5){\vector(0,1){17}}
\put(20,5){\vector(0,1){17}}
\end{picture} 
}$. For the latter case, since $C$ is connected, there must exists a left arrow that intersects with the right arrow $\raisebox{-4mm}{
\begin{picture}(24,28)
\put(12,14){\circle{24}}
\put(9,0){$\ast$}
\put(4,5){\vector(0,1){17}}
\end{picture}}$. 
Thus in both cases, $A_C$ contains $\GaussD$ as its sub Gauss diagram.
 Since the number of degree two sub Gauss diagrams of $G_D$ which are equal to $\GaussD$ is $a_2(K)$, we conclude 
\[ a_{2n}(K) \leq \binom{c-2}{2n-2}\langle \GaussD, G_D \rangle  \leq  \frac{1}{(2n-2)!}c^{2n-2} a_2(K).\] 
\end{proof}

\subsection{Bounds for $4v_3$}

We review some estimates of $4v_3(K)$. Most results are taken from \cite{st}\footnote{We remark that the invariant $v_3$ in Stoimenow's paper \cite{st} is equal to $16v_3(K)$ in our paper.}, but here we make arguments simpler or to make slight improvements, by using the following Gauss diagram formula.
\begin{proposition}\cite[Page 493, last line]{cp}
\[ 4v_3(K) = \langle \mathcal{C}_{A} +\mathcal{C}_{B}, G_D \rangle \]
where
\begin{align*}
\mathcal{C}_A & = 
\raisebox{-4mm}{
\begin{picture}(24,28)
\put(12,14){\circle{24}}
\put(9,0){$\ast$}
\put(21,22){\vector(-1,-1){17}}
\put(3,22){\vector(1,-1){17}}
\put(23,17){\line(-1,0){23}}
\end{picture} 
}
+
\raisebox{-4mm}{
\begin{picture}(24,28)
\put(12,14){\circle{24}}
\put(9,0){$\ast$}
\put(21,22){\vector(-1,-1){17}}
\put(3,22){\vector(1,-1){17}}
\put(10,26){\vector(1,-1){14}}
\end{picture} 
}
+
\raisebox{-4mm}{
\begin{picture}(24,28)
\put(12,14){\circle{24}}
\put(9,0){$\ast$}
\put(21,22){\vector(-1,-1){17}}
\put(3,22){\vector(1,-1){17}}
\put(14,26){\vector(-1,-1){14}}
\end{picture} 
}
+
\raisebox{-4mm}{
\begin{picture}(24,28)
\put(12,14){\circle{24}}
\put(9,0){$\ast$}
\put(19,24){\vector(-1,-1){17}}
\put(5,24){\vector(1,-1){17}}
\qbezier(23,18)(14,14)(16,3)
\put(16,3){\vector(0,-1){0}}
\end{picture} 
},\\
\mathcal{C}_B & =
\raisebox{-4mm}{
\begin{picture}(24,28)
\put(12,14){\circle{24}}
\put(9,0){$\ast$}
\put(22,20){\scriptsize $+$}
\put(-5,20){\scriptsize  $+$}
\put(21,22){\vector(-1,-1){17}}
\put(3,22){\vector(1,-1){17}}
\end{picture} 
}
-
\raisebox{-4mm}{
\begin{picture}(24,28)
\put(12,14){\circle{24}}
\put(9,0){$\ast$}
\put(22,20){\scriptsize  $-$}
\put(-5,20){\scriptsize  $-$}
\put(21,22){\vector(-1,-1){17}}
\put(3,22){\vector(1,-1){17}}
\end{picture}.
}
\end{align*}
\end{proposition}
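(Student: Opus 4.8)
The plan is to verify the stated identity by the standard route for Gauss diagram formulae: show that the right-hand side $\langle \mathcal{C}_A + \mathcal{C}_B, G_D \rangle$ defines a knot invariant, that this invariant is of finite type of degree $3$, and that both its degree-$3$ symbol and its lower-degree part coincide with those of $4v_3$. Because the subdiagrams occurring in $\mathcal{C}_A$ carry three arrows and those in $\mathcal{C}_B$ carry two arrows, each term pairs against at most three crossings; hence, once invariance under the Reidemeister moves is known, the resulting invariant has degree at most $3$ automatically, and no separate degree estimate is needed.

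First I would prove Reidemeister invariance of $\langle \mathcal{C}_A + \mathcal{C}_B, G_D \rangle$. For each local move one tracks how the set of signed arrows of $G_D$ changes and compares the induced variations of the two pairings. The first move introduces or deletes a single isolated arrow, and the second move introduces or deletes a cancelling pair of opposite sign; here the signed two-arrow diagrams comprising $\mathcal{C}_B$ are arranged precisely so that the degree-$2$ variation they contribute cancels the variation that the degree-$3$ part $\mathcal{C}_A$ acquires when its subdiagrams gain or lose an endpoint on the newly created arrows. The third Reidemeister move is the crux: it permutes three mutually crossing arrows without altering their signs or their number, and one must enumerate the finitely many positions of the base point relative to the three participating strands and check that the signed count furnished by the four arrow diagrams in $\mathcal{C}_A$ is left unchanged. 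This case analysis is where the particular choice of $\mathcal{C}_A$, as opposed to any other combination with the same top-degree symbol, becomes essential, and I expect it to be the main obstacle.

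Having established that the right-hand side is a knot invariant of degree $\le 3$, I would pin it down by matching symbols and then normalizing. The group of primitive $\mathbb{Z}$-valued finite type invariants of degree $3$ is one-dimensional and is spanned by $4v_3$, so it suffices to evaluate the degree-$3$ symbol of $\langle \mathcal{C}_A + \mathcal{C}_B, G_D \rangle$ on the generator of the relevant space of chord diagrams and confirm that it equals the symbol of $4v_3$; by construction the top-degree part of $\mathcal{C}_A$ is the descending lift of exactly this symbol. Any remaining discrepancy then lies in degree $\le 2$, i.e. is an integer combination of $a_2$ and the constant invariant, which I would eliminate by evaluating both sides on the unknot and on the two trefoils, using that $4v_3$ changes sign under mirror imaging while $a_2$ and the constant do not. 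This removes all ambiguity and yields the claimed equality. Alternatively, and this is the route actually taken in the text, one may simply quote the computation of \cite{cp}, from which the formula is taken verbatim.
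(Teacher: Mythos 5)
Your proposal and the paper diverge in an essential way: the paper does not prove this proposition at all. It is imported verbatim from Chmutov--Polyak \cite{cp} (the citation ``Page 493, last line'' is the proof, so to speak), and your closing sentence correctly guesses this. What you outline instead is the standard from-scratch verification scheme for Gauss diagram formulae in the style of Goussarov--Polyak--Viro \cite{gpv}: prove Reidemeister invariance of $\langle \mathcal{C}_A+\mathcal{C}_B, G_D\rangle$, note that the degree bound $\leq 3$ then comes for free because every diagram in $\mathcal{C}_A+\mathcal{C}_B$ has at most three arrows (the usual cancellation argument over switched crossings), match the degree-$3$ symbol against that of $4v_3$, and kill the residual $\mathbb{Z} a_2 + \mathbb{Z}\cdot 1$ ambiguity by evaluating on the unknot and the two trefoils, using $4v_3(\overline{K})=-4v_3(K)$. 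This is a sound and genuinely different route, and it is also not how \cite{cp} actually proceeds: there the formula falls out of their Gauss diagram formulas for the low-degree coefficients of the HOMFLYPT polynomial, rather than from an invariance-plus-normalization argument. What your route buys is self-containedness and a clear explanation of why such a formula \emph{must} hold once invariance is checked; what the citation buys is exactly what you concede is missing from your sketch --- the Reidemeister III case analysis (together with basepoint-independence and the R1/R2 interaction between $\mathcal{C}_A$ and the signed diagrams $\mathcal{C}_B$), which is the entire mathematical content and which you name as ``the main obstacle'' without carrying it out. As written, your text is a correct blueprint rather than a proof; since the truth of the formula for this \emph{particular} $\mathcal{C}_A+\mathcal{C}_B$ is precisely what that deferred case analysis would establish, quoting \cite{cp}, as the paper does, is the legitimate shortcut.
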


This formula leads to an slight improvement of \cite[Proposition 7.2]{st}.
\begin{lemma}
\label{lemma:stoimenow-improvement}
Let $K$ be a knot represented by a positive diagram $D$, 
\[ a_2(K) \leq 4v_3(K)\]
\end{lemma}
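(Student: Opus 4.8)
The plan is to read the inequality straight off the Gauss diagram formula $4v_3(K)=\langle \mathcal{C}_A+\mathcal{C}_B, G_D\rangle$ of the preceding proposition, exploiting the single structural feature of a positive diagram: every crossing is positive, so \emph{every} arrow of $G_D$ carries the sign $+$. This makes $\varepsilon(G')=+1$ for any subdiagram $G'$ whatsoever, which is exactly what turns the signed count into an unsigned (hence nonnegative) count. I would treat the two summands $\mathcal{C}_A$ and $\mathcal{C}_B$ separately.

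First I would dispose of $\mathcal{C}_A$. It is a sum, with all coefficients equal to $+1$, of four \emph{unsigned} weak Gauss diagrams of degree $3$. For a positive diagram, any subdiagram $G'\subset G_D$ realizing one of these four shapes contributes $\varepsilon(G')=(+1)(+1)(+1)=+1$. Hence every term of $\langle \mathcal{C}_A, G_D\rangle$ is nonnegative, and so $\langle \mathcal{C}_A, G_D\rangle\geq 0$.

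Next I would evaluate $\langle \mathcal{C}_B, G_D\rangle$ exactly. The two diagrams in $\mathcal{C}_B$ share the single underlying arrow configuration $\GaussDrev$, which is precisely the shape occurring in the alternative formula \eqref{eqn:a2-formula} for $a_2$; they differ only in that the first carries the sign $+$ on both arrows while the second carries $-$ on both. Against $G_D$, the all-$+$ diagram matches exactly the subdiagrams of shape $\GaussDrev$ (all of whose arrows are automatically positive), each contributing $+1$, so its pairing equals $\langle \GaussDrev, G_D\rangle = a_2(K)$ by \eqref{eqn:a2-formula}. The all-$-$ diagram, on the other hand, requires both matched arrows to be negative, of which $G_D$ has none; so its pairing is $0$. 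Therefore $\langle \mathcal{C}_B, G_D\rangle = a_2(K)-0 = a_2(K)$.

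Combining the two contributions gives $4v_3(K)=\langle \mathcal{C}_A, G_D\rangle+\langle \mathcal{C}_B, G_D\rangle\geq 0+a_2(K)=a_2(K)$, which is the claim. The step I expect to require the most care — and the one I would double-check — is the bookkeeping for $\mathcal{C}_B$: correctly identifying its arrow shape with the one in \eqref{eqn:a2-formula}, and verifying that positivity of $D$ makes the $+$-term collapse to the unsigned count $a_2(K)$ while simultaneously annihilating the $-$-term. Once positivity is used to force every sign product to $+1$, both the nonnegativity of $\langle \mathcal{C}_A, G_D\rangle$ and the exact value of $\langle \mathcal{C}_B, G_D\rangle$ follow without further computation.
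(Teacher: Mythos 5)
Your proof is correct and is essentially the paper's own argument, just written out in more detail: the paper likewise observes that positivity forces $\langle \mathcal{C}_A, G_D\rangle \geq 0$ (all sign products are $+1$) and that $\langle \mathcal{C}_B, G_D\rangle = a_2(K)$ via \eqref{eqn:a2-formula}, since the all-$-$ term vanishes on a positive diagram. Your careful bookkeeping for $\mathcal{C}_B$ is exactly the justification the paper leaves implicit.
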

\begin{proof}
For a positive diagram $D$, $\langle \mathcal{C}_A, G_D \rangle \geq 0$ and $\langle \mathcal{C}_B, G_D \rangle =a_2(K)$ by \eqref{eqn:a2-formula}. Therefore $4v_3(K) \geq a_2(K)$. 
\end{proof}

In particular, by \eqref{eqn:a2n-positive}, for a non-trivial positive knot $K$
\begin{equation}
\label{eqn:v3-positive} 0 < g(K) \leq a_2(K) \leq 4v_3(K).
\end{equation}
(We remark that $a_2(K)\geq g(K)$ and $4v_3(K)\geq g(K)$ were first proven in \cite[Theorem 6.2]{st} and \cite[Theorem 5.1]{st}, respectively).

We point out the following optimal estimate of $4v_3(K)$ conjectured by Willerton \cite{wi}, \cite[Conjecture 2.11]{oh} (we remark that for the $(2,p)$-torus knot $K$, $4v_3(K)=\textsf{sgn}(p)\frac{c(K)^{3}-c(K)}{24})$.

\begin{proposition}[Optimal bound of $4v_3(K)$]
\label{prop:v3-bounds}
\[ -\frac{1}{24}(c(K)^{3}-c(K)) \leq 4v_3(K) \leq \frac{1}{24}(c(K)^{3}-c(K))\]
\end{proposition}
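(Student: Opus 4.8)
The plan is to evaluate the Gauss diagram formula $4v_3(K)=\langle \mathcal C_A+\mathcal C_B, G_D\rangle$ on a minimal-crossing diagram $D$ of $K$, so that $G_D$ has exactly $c=c(K)$ arrows, and to estimate the degree-three part $\langle \mathcal C_A, G_D\rangle$ and the degree-two part $\langle \mathcal C_B, G_D\rangle$ separately, in the spirit of the proof of Proposition~\ref{proposition:optimal-a_2}. Because $4v_3(\overline K)=-4v_3(K)$ while $c(\overline K)=c(K)$, it is enough to prove the upper bound $4v_3(K)\le \tfrac1{24}(c^3-c)$; applying this to $\overline K$ then yields the matching lower bound. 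In every estimate I bound a matched pair or triple of arrows by its absolute contribution $\pm 1$, so the bounds ignore the actual signs and are saturated precisely when all contributions agree in sign — the situation realized by the standard diagram of the $(2,p)$-torus knot.

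For the degree-two term, $\langle\mathcal C_B,G_D\rangle$ is the signed count of the crossing core $\GaussDrev$ appearing in the formula \eqref{eqn:a2-formula} for $a_2$, with its two arrows both positive minus both negative. Hence $|\langle \mathcal C_B, G_D\rangle|$ is at most the total (unsigned) number of sub diagrams of $G_D$ of this form, which the counting in the proof of Proposition~\ref{proposition:optimal-a_2} bounds by $\tfrac{c^2-1}{8}$, using the realizability fact that every arrow of a knot Gauss diagram crosses an even number of arrows. This contribution is of lower order, and its role is to furnish the additive correction that upgrades the leading term to the exact polynomial $\tfrac1{24}(c^3-c)$.

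The heart of the argument is the degree-three term. Each of the four diagrams constituting $\mathcal C_A$ is the crossing core $\GaussDrev$ together with one additional arrow, so a naive estimate — at most $\tfrac{c^2-1}{8}$ choices of core and at most $c-2$ positions for the third arrow — gives roughly $\tfrac18 c^3$, which overshoots the target by a factor of $3$. The gain comes from the fact that a triple of arrows contributing to $\langle\mathcal C_A,G_D\rangle$ is counted once there, whereas the ``core plus third arrow'' bookkeeping records it up to three times, once for each of the three pairs inside the triple that can serve as the core. Discounting this multiplicity converts $\tfrac18 c^3$ into $\tfrac1{24}c^3$, which is the correct leading behaviour, and combining it with the degree-two correction above is designed to produce exactly $\tfrac1{24}(c^3-c)$.

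The main obstacle is to make the overcounting multiplicity exactly $3$ and to carry the realizability and base-point constraints through with full precision, so that the final bound is the sharp polynomial $\tfrac1{24}(c^3-c)$ rather than merely $\tfrac1{24}c^3+O(c^2)$. Concretely, one must verify that for every triple of arrows realizing one of the four $\mathcal C_A$-patterns each of its three pairs indeed presents a legitimate core (so that the factor $3$ is neither over- nor under-counted), and that the even-crossing condition trims the count by the same lower-order amount seen in the even case of Proposition~\ref{proposition:optimal-a_2}. A final check that the standard $(2,p)$-torus knot diagram saturates every inequality confirms optimality.
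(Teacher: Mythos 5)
Your plan has two genuine gaps, and the second is fatal to sharpness even if the first were repaired. The multiplicity-$3$ mechanism is wrong both in logical direction and in fact. In direction: to convert the count of pairs (core, third arrow) into an \emph{upper} bound on the number of matched triples, you need every matched triple to contain \emph{at least} three sub-pairs that are legitimate $a_2$-cores; your statement that each triple is ``counted up to three times'' bounds the multiplicity from above, which yields only a lower bound on the triple count. In fact: inspection of $\mathcal{C}_A$ shows the claim you propose to verify is false. In the second, third and fourth diagrams of $\mathcal{C}_A$ the extra chord crosses exactly one of the two core arrows (this is visible from the cyclic order of the endpoints), and two chords that do not cross can never form the crossing core; so a triple matching these patterns contains at most two valid cores, not three. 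In the first diagram the extra chord is undirected, so whether the other two pairs are valid cores depends on the direction of the arrow of $G_D$ matching it. Hence no uniform factor $3$ exists, and dividing the core count by $3$ undercounts precisely the triples coming from three of the four patterns.

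Even granting the factor $3$, the arithmetic does not close: your $\mathcal{C}_A$-estimate is $\frac{1}{3}\cdot\frac{c^{2}-1}{8}(c-2)$, and adding the $\mathcal{C}_B$-estimate $\frac{c^{2}-1}{8}$ gives $\frac{(c^{2}-1)(c+1)}{24}=\frac{c^{3}-c}{24}+\frac{c^{2}-1}{24}$, overshooting the target by a term of order $c^{2}$. Note also that $\frac{1}{24}(c^{3}-c)$ is \emph{smaller} than $\frac{1}{24}c^{3}$, so no positive degree-two ``correction'' can upgrade a $\frac{1}{24}c^{3}$-type bound to the sharp polynomial; it can only worsen it. The paper's proof avoids triple-counting and sign bookkeeping entirely: for a minimal crossing diagram $D$ it passes to the positivization $D_+$, invokes Stoimenow's results \cite{st} that $a_2$ and $v_3$ do not decrease when passing from $D$ to $D_+$ and that $4v_3 \leq \frac{c}{3}a_2$ holds for positive diagrams, and then applies Proposition \ref{proposition:optimal-a_2} to $D_+$ to get $4v_3(K)\leq 4v_3(D_+)\leq \frac{c}{3}\cdot\frac{c^{2}-1}{8}=\frac{c^{3}-c}{24}$; the mirror-image symmetry supplies the lower bound exactly as you propose. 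The factor $\frac{1}{3}$ you are trying to extract combinatorially is precisely the content of Stoimenow's inequality for positive diagrams, and that is where the real work lies; recovering it by a direct count of the $\mathcal{C}_A$-patterns would require controlling sign cancellations among the four patterns, not just absolute contributions.
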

\begin{proof}
For a diagram $D$ let $D_+$ be the positive diagram obtained from $D$ by suitably changing over-under information at each crossings. It is known that $a_2(D)\leq a_2(D_+)$ and $v_3(D)\leq v_3(D_+)$ \cite[Theorem 5.2]{st}. 
Let $D$ be a minimum crossing diagram of a knot $K$. By \cite[Theorem 7.2]{st},  $4v_3(K) \leq \frac{c(D_+)}{3}a_2(K)$. Therefore by Proposition \ref{proposition:optimal-a_2} we conclude that 
\[ 4v_3(K) \leq 4v_3(D_+) \leq \frac{c(K)}{3}a_2(D_+) \leq \frac{c(K)^3-c(K)}{24}. \]
The lower bound follows from the property $4v_3(\overline{K})=-4v_3(K)$.
\end{proof}

\section{Determinant estimate}
\label{section:determinant}

In this section we develop a lower bound of the determinant of alternating knots.

\subsection{Twist regions and twist number}
\label{section:twist-number}

A \emph{twist region} $R$ of a knot diagram $D$ is a maximum non-empty sub-diagram that consists of non-trivial twists of two parallel strands as 
\[ 
\begin{picture}(90,30)
\put(0,23){\line(1,0){20}}
\put(0,7){\line(1,0){20}}
\put(20,0){\framebox(30,30){}}
\put(30,10){\LARGE $R$}
\put(50,23){\line(1,0){16}}
\put(50,7){\line(1,0){16}}
\put(75,10){$=$}
\end{picture}
\begin{picture}(120,30)
\put(0,3){\line(1,1){24}}
\put(10,17){\line(-1,1){10}}
\put(14,13){\line(1,-1){10}}
\put(24,27){\line(1,-1){10}}
\put(24,3){\line(1,1){24}}
\put(38,13){\line(1,-1){10}}
\put(52,13){$\cdots$} 
\put(68,3){\line(1,1){24}}
\put(68,27){\line(1,-1){10}}
\put(82,13){\line(1,-1){10}}
\put(102,15){\mbox{or}}
\end{picture} 
\begin{picture}(100,30)
\put(0,3){\line(1,1){10}}
\put(14,17){\line(1,1){10}}
\put(0,27){\line(1,-1){24}}
\put(24,27){\line(1,-1){24}}
\put(24,3){\line(1,1){10}}
\put(38,17){\line(1,1){10}}
%\put(48,0){\line(1,1){24}}
%\put(48,24){\line(1,-1){10}}
%\put(62,10){\line(1,-1){10}}
\put(52,13){$\cdots$} 
\put(68,27){\line(1,-1){24}}
\put(68,3){\line(1,1){10}}
\put(82,17){\line(1,1){10}}
\end{picture} \]
Here maximum means that $R$ contains as many crossings as possible.
We denote by $c(R)$ the number of crossings in the twist region $R$.
Two twist regions $R$ and $R'$ are \emph{equivalent} if
there exists a circle $C$ in the projection plane $\R^{2}$ such that $C$ is disjoint from $D$, except $C$ traverses $R$ and $R'$ once as in 
\[
\begin{picture}(200,50)
\put(0,33){\line(1,0){20}}
\put(0,17){\line(1,0){20}}
\put(20,10){\framebox(30,30){}}
\put(30,20){\LARGE $R$}
\put(50,33){\line(1,0){16}}
\put(50,17){\line(1,0){16}}
\put(100,33){\line(1,0){20}}
\put(100,17){\line(1,0){20}}
\put(120,10){\framebox(30,30){}}
\put(130,20){\LARGE $R'$}
\put(150,33){\line(1,0){16}}
\put(150,17){\line(1,0){16}}
\put(85,25){\oval[5](100,50)}
\put(85,2){$C$}
\end{picture} 
\]
We call such a circle $C$ an \emph{equivalence circle} between $R$ and $R'$.

The number of twist regions of the diagram $D$ is called the \emph{twist number} and denoted by $tw(D)$. We denote by $\overline{tw}(D)$ the number of equivalence classes of twist regions of a diagram $D$.
% and we call $\overline{tw}(D)$ the \emph{reduced twist number} of $D$.

\begin{definition}
A diagram $D$ is \emph{twist-reduced} if no two distinct twist regions are equivalent. 
\end{definition}

Thus for a twist-reduced diagram $D$, $tw(D)=\overline{tw}(D)$. When two twist regions $R$ and $R'$ are equivalent then their equivalence circle shows that $R$ and $R'$ are consolidated into a single twist region by flype 
\[
\begin{picture}(200,30)
\put(0,3){\line(1,1){24}}
\put(10,17){\line(-1,1){10}}
\put(14,13){\line(1,-1){10}}
\put(50,15){\circle{30}}
\put(45,10){\LARGE $Q$}
\put(24,27){\line(1,0){17}}
\put(24,3){\line(1,0){17}}
\put(59,27){\line(1,0){17}}
\put(59,3){\line(1,0){17}}
\put(80,13){$\longleftrightarrow$}
\put(104,27){\line(1,0){17}}
\put(104,3){\line(1,0){17}}
\put(130,15){\circle{30}}
\put(125,18){\LARGE \rotatebox{180}{$Q$}}
\put(140,27){\line(1,0){17}}
\put(140,3){\line(1,0){17}}
\put(157,3){\line(1,1){24}}
\put(167,17){\line(-1,1){10}}
\put(171,13){\line(1,-1){10}}
\end{picture} 
\]
(and Redemeister move II, if necessary).
In particular, every alternating diagram $D$ can be made twist-reduced by applying flypes.  

\begin{definition}
The \emph{twist number} of an alternating knot $K$ is defined by
\begin{align*}
 tw(K) & = \min\{\overline{tw}(D) \: | \: D \mbox{ is an alternating diagram of }K\}\\
 &= \min\{tw(D) \: | \: D \mbox{ is a reduced, twist-reduced alternating diagram of }K\}.
\end{align*}
\end{definition}

\subsection{Twist number, crossing number and determinant}

In \cite[Theorem 4.3]{st-det} Stoimenow showed that for a reduced alternating diagram $D$ of a link $L$, the inequality
\begin{equation}
\label{eqn:st-det}
\det(L) \geq 2\gamma^{\btw(D)-1} 
\end{equation}
holds. Here $\gamma=1.425...$ is the inverse of the positive root of $x^{5}+2x^{4}+x^{3}-1=0$. In practice, since $\gamma \approx \sqrt{2}=1.412...$ one can use $\sqrt{2}$ as an approximation of the constant $\gamma$. 

By taking into account of the crossing numbers we get an improvement of \eqref{eqn:st-det}.

\begin{theorem}
\label{theorem:det-key}
Let $D$ be a reduced alternating diagram of a link $L$ with $\btw(D)\geq 2$. 
Then
\[ \det(L) > 2\gamma^{-1}\left(\gamma^{\btw(D)} + (c(D)-\btw(D))\gamma^{(\btw(D)-1)/2} \right). \]
Moreover, if $D$ contains a twist region $R$ that contains $c(R)>2$ crossings, then
\begin{align*}
\det(L) &> 2\gamma^{-1}\Bigl(\gamma^{\btw(D)} + (c(D)-\btw(D))\gamma^{(\btw(D)-1)/2}\\ & \qquad \qquad + (c(R)-2)(c(D)-\btw(D)-c(R))\gamma^{(\btw(D)-3)/4}\Bigr)
\end{align*}
\end{theorem}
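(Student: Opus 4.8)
The plan is to reinterpret the determinant as a weighted spanning‑tree count and then expand in the twist multiplicities. For a reduced alternating diagram the determinant equals the number of spanning trees of a checkerboard (Tait) graph; after combining flype‑equivalent twist regions I work with the reduced graph $\overline{G}$, a connected, bridgeless, simple planar graph with exactly $\btw(D)$ edges, where the edge $e$ produced by a twist region $R_e$ carries the integer weight $c(R_e)\ge 1$. Writing $\tau(H)$ for the weighted spanning‑tree count of a weighted graph $H$, one has $\det(L)=\tau(\overline{G})$ (by the weighted matrix–tree identity, since replacing an edge of weight $w$ by $w$ parallel edges multiplies the tree‑count through that edge by $w$). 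Substituting $c(R_e)=1+(c(R_e)-1)$ and expanding the product over the edges of each spanning tree gives
\[ \det(L)=\sum_{S}\Bigl(\prod_{e\in S}(c(R_e)-1)\Bigr)\,\tau(\overline{G}/S), \]
the sum running over all forests $S\subseteq\overline{G}$ and $\overline{G}/S$ denoting contraction. Since each $c(R_e)-1\ge 0$, every summand is non‑negative, so I obtain lower bounds by retaining only a convenient sub‑collection of forests.

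The key observation is that contraction stays inside the alternating world. If $e=uv$ is a twist‑region edge, then $\overline{G}/e$ is again the reduced Tait graph of a reduced alternating link $L'$, whose twist number equals $\btw(D)-1-t$, where $t$ is the number of common neighbours of $u$ and $v$ in $\overline{G}$ (each common neighbour creates exactly one bigon, merging two edges into a single twist region). Because $e$ and the $2t$ edges joining $u,v$ to their common neighbours are pairwise distinct, $2t+1\le\btw(D)$, i.e. $t\le(\btw(D)-1)/2$; feeding $L'$ into Stoimenow's bound \eqref{eqn:st-det} yields
\[ \tau(\overline{G}/e)=\det(L')\ge 2\gamma^{\,\btw(D)-2-t}\ge 2\gamma^{(\btw(D)-3)/2}=2\gamma^{-1}\gamma^{(\btw(D)-1)/2}. \]
Retaining in the expansion the empty forest and the singletons $S=\{e\}$, the empty forest contributes $\tau(\overline{G})\ge 2\gamma^{-1}\gamma^{\btw(D)}$ by \eqref{eqn:st-det}, while the singletons contribute $\sum_e(c(R_e)-1)\tau(\overline{G}/e)\ge(c(D)-\btw(D))\,2\gamma^{-1}\gamma^{(\btw(D)-1)/2}$, using $\sum_e(c(R_e)-1)=c(D)-\btw(D)$. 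This is exactly the first inequality.

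For the refined bound I additionally keep the two‑edge forests $S=\{e_R,f\}$, where $e_R$ is the edge of the designated twist region $R$ and $f$ ranges over the twist‑region edges that survive the contraction of $e_R$ as genuine, pairwise non‑parallel edges of the resulting reduced alternating graph. Iterating the estimate above through the two successive contractions and applying $t\le(\cdot)/2$ at each stage, the twist number drops only to at least $(\btw(D)-3)/4$, so that $\tau(\overline{G}/\{e_R,f\})\ge 2\gamma^{-1}\gamma^{(\btw(D)-3)/4}$. The admissibility restriction on $f$ and the replacement of the exact expansion coefficient $(c(R)-1)(c(R_f)-1)$ by a uniform weight are precisely what force the slightly diminished factors $(c(R)-2)$ and $(c(D)-\btw(D)-c(R))$; summing these retained terms and adding them to the previous two families produces the second inequality. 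Strictness throughout follows from comparing the integrality of $\det(L)\in\Z$ with the irrationality of the right‑hand side, a positive combination of half‑ and quarter‑integer powers of the algebraic irrational $\gamma$.

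The step I expect to be the main obstacle is making the two reduction estimates airtight at the level of diagrams rather than abstract multigraphs: one must verify that contracting a twist‑region edge genuinely corresponds to a reduced alternating link (no nugatory crossing is created, and connectivity together with the alternating structure survives the smoothing), and that the common‑neighbour count $t$ equals exactly the number of twist regions lost. For the sharper inequality the delicate bookkeeping is to pin down which edges $f$ survive the first contraction as pairwise non‑parallel twist regions, so that the second application of \eqref{eqn:st-det} is legitimate; this is the source of the reduced coefficients $(c(R)-2)$ and $(c(D)-\btw(D)-c(R))$, and obtaining their exact form — rather than a merely asymptotic version — is the most error‑prone part of the argument.
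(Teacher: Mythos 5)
Your argument collapses at its foundation: the identity $\det(L)=\tau(\overline{G})$, with every twist region treated as a weighted (i.e.\ parallel-multiplicity) edge, is false. Relative to a fixed checkerboard colouring, twist regions come in two types: if the bigons of $R$ are unshaded, its crossings are $c(R)$ parallel edges of the Tait graph, and the weighted matrix--tree interpretation you use is correct for that edge; but if the bigons are shaded, the crossings of $R$ form a \emph{series path}, and then the multiplicity $c(R)$ multiplies the count of spanning trees \emph{omitting} the reduced edge, not those containing it. Both types occur simultaneously in a single Tait graph, and this is not a fringe phenomenon: for the standard diagram of the figure-eight knot the Tait graph is a triangle with one doubled edge, so your reduced graph is two vertices joined by two edges of weight $2$ (one of each type), and your formula gives $\tau(\overline{G})=2+2=4$, whereas $\det(4_1)=5$. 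The correct identity is the mixed one, $\det(L)=\sum_{T}\bigl(\prod_{e\in T,\ \mathrm{parallel}}c(R_e)\bigr)\bigl(\prod_{e\notin T,\ \mathrm{series}}c(R_e)\bigr)$, which for the figure-eight gives $2\cdot 2+1=5$. Since your forest expansion $\det(L)=\sum_{S}\bigl(\prod_{e\in S}(c(R_e)-1)\bigr)\tau(\overline{G}/S)$ and all the subsequent contraction estimates (the common-neighbour count $t$, the bound $\tau(\overline{G}/e)\ge 2\gamma^{-1}\gamma^{(\btw(D)-1)/2}$, the two-edge forests) are built on the all-parallel identity, the proof does not survive: for series-type regions the relevant minor is a \emph{deletion} $\overline{G}-e$, not a contraction, and controlling twist numbers of deletions is a different (and harder) problem. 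Repairing this forces you into a genuine deletion--contraction induction, which is in essence what the paper does, diagrammatically, via $\det(L)=\det(D_0)+\det(D_\infty)$ for a crossing resolved inside a twist region, with a careful case analysis of how $\btw$ behaves under each resolution.

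Two further points would still need work even after that repair. First, strictness: your retained-terms argument bottoms out in Stoimenow's non-strict bound \eqref{eqn:st-det}, and when all $c(R_e)=1$ your retained terms give exactly $2\gamma^{\btw(D)-1}$, so strict inequality cannot come from dropped positive terms; the appeal to ``irrationality of the right-hand side'' is not automatic for a sum of half- and quarter-integer powers of $\gamma$ and would itself require an argument (the paper instead gets strict inequalities from explicitly computed base cases propagated through the induction). Second, as you yourself flag, one must verify that contracting (or deleting) a twist-region edge yields the reduced Tait graph of a \emph{reduced} alternating diagram and that the twist number drops by exactly the amount claimed; the paper's Claims \ref{claim:1} and \ref{claim:2} show this bookkeeping is delicate (non-twist-reducedness genuinely occurs, e.g.\ when $c(R)=2$, and the pretzel diagrams are exceptional), so it cannot be waved through.
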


We remark that assertion is not true when $\btw(D)=1$, namely. $D$ is the $(2,p)$-torus knot/link diagrams.

\begin{proof}
With no loss of generality we may assume that $D$ is twist-reduced so $\btw(D)=tw(D)$. 

By direct computations one can check that the assertion holds for the case $tw(D)=2$, the double twist knot/link diagram case, or the 3-strand pretzel knot/link diagram case. In the following we assume that $tw(D)>2$ and $D$ is not the 3-strand pretzel knot/link diagrams.

We prove the assertion by induction on $((c(D)-tw(D),c(D))$.
If $c(D)-tw(D)=0$, then $R$ contains exactly one crossing so $c(R)=1$. Thus in this case the desired inequality is nothing but \eqref{eqn:st-det}.

Assume that $c(D)-tw(D)>0$.
By taking the mirror image if necessary, we may assume that the twist region $R$ consists of the crossings of the form 
$\raisebox{-3mm}{
\begin{picture}(24,24)
\put(0,0){\line(1,1){24}}
\put(14,10){\line(1,-1){10}}
\put(0,24){\line(1,-1){10}}
\end{picture} }$.

Take a crossing $c$ of $R$ and let $D_{0}$ be the diagram obtained by resolving the crossing $c$ as 
$ 
\raisebox{-3mm}{
\begin{picture}(24,24)
\put(0,0){\line(1,1){24}}
\put(14,10){\line(1,-1){10}}
\put(0,24){\line(1,-1){10}}
\end{picture} }
 \longrightarrow \raisebox{-3mm}{
\begin{picture}(24,24)
\qbezier(0,0)(12,14)(24,0)
\qbezier(0,24)(12,10)(24,24)
\end{picture}}$. Similarly, let $D_{\infty}$ be the diagram obtained by resolving the crossing $c$ as
$ \raisebox{-3mm}{
\begin{picture}(24,24)
\put(0,0){\line(1,1){24}}
\put(14,10){\line(1,-1){10}}
\put(0,24){\line(1,-1){10}}
\end{picture} }
 \longrightarrow \raisebox{-3mm}{
\begin{picture}(24,24)
\qbezier(0,0)(14,12)(0,24)
\qbezier(24,0)(10,12)(24,24)
\end{picture} }$ and removing the trivial kinks in $R$ by Reidemeister move I.
Thus $D_{\infty}$ is the diagram obtained by resolving the twist region $R$ as
\[ 
\begin{picture}(90,30)
\put(0,23){\line(1,0){20}}
\put(0,7){\line(1,0){20}}
\put(20,0){\framebox(30,30){}}
\put(30,10){\LARGE $R$}
\put(50,23){\line(1,0){16}}
\put(50,7){\line(1,0){16}}
\put(75,10){$\longrightarrow$}
\end{picture} \quad \raisebox{1mm}{
\begin{picture}(24,24)
\qbezier(0,0)(14,12)(0,24)
\qbezier(24,0)(10,12)(24,24)
\end{picture} } \]

The diagram $D_{0}$ is reduced. The diagram $D_{\infty}$ is also reduced because of a crossing $c$ of $D_{\infty}$ is nugatory then the crossing $c$ in $D$ must be contained in the twist region in $R$. As for the crossing numbers we have $c(D_0)=c(D)-1$ and $c(D_\infty)=c(D)-c(R)$. \\

\textbf{Case 1: $D_0$ is twist-reduced}\\

In this case $\overline{tw}(D_0)=tw(D_0)=\overline{tw}(D)-1$.
In the diagram $D_0$, the twist region $R$ contains $c(R)-1$ crossings so by induction
\begin{align*}
\det(D_0) & > 2\gamma^{-1} \Bigl(\gamma^{\btw(D)} + ((c(D)-1)-\btw(D))\gamma^{(\btw(D)-1)/2} \\
& \qquad \qquad + (c(R)-3)(c(D)-c(R)-\btw(D))\gamma^{(\btw(D)-3)/4}) \Bigr) 
\end{align*}

To understand $\btw(D_\infty)$, we observe the following (we remark that this claim does not use the assumption that $D_0$ is twist-reduced, and the claim will be used to the Case 2 below).

\begin{claim}
\label{claim:1}
For each twist region $R_0$ of $D_{\infty}$ there is at most one twist region $R_1(\neq R_0)$ of $D_{\infty}$ which is twist equivalent to $R_0$.
\end{claim}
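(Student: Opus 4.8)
The plan is to argue by contradiction, exploiting the hypothesis that $D$ (hence $D_\infty$, after the reduction $\btw(D)=tw(D)$ already in force) is twist-reduced, so that \emph{no} two distinct twist regions of $D$ are equivalent. The passage from $D$ to $D_\infty$ only deletes the $c(R)$ crossings of the single twist region $R$, replacing that region by its smoothing: two disjoint arcs, say $\alpha$ and $\beta$, joining the four legs of $R$ in pairs. The first step is a locality observation: if $R_0$ and $R_1$ are distinct twist regions of $D_\infty$ with an equivalence circle $C$ whose flype disk is disjoint from the former site of $R$ (so it avoids both $\alpha$ and $\beta$), then restoring the crossings of $R$ changes nothing inside or outside $C$, so $C$ witnesses $R_0\sim R_1$ already in $D$ — contradicting twist-reducedness. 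Hence \emph{every} equivalence in $D_\infty$ is genuinely new and its flype disk must meet the smoothing site; concretely, one of the two strands joining $R_0$ and $R_1$ inside $C$ must be one of the arcs $\alpha,\beta$.

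Next I would bound the number of new equivalences each arc can produce. An arc, say $\alpha$, has exactly two endpoints, which are two of the four legs of $R$; following $D_\infty$ outward from these endpoints reaches two (not necessarily distinct) twist regions. Since the connecting strand of an equivalence runs from one twist region to the other and $\alpha$ has fixed endpoints, $\alpha$ can serve as such a connecting strand for at most the single pair of twist regions reached along its two ends. Thus $\alpha$ mediates at most one new equivalent pair, and likewise $\beta$, so that $D_\infty$ acquires at most two new equivalent pairs in total. In particular a twist region $R_0$ could fail the claim only by appearing in \emph{both} of these pairs.

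The crux, and the step I expect to be the main obstacle, is to rule that out. If $R_0$ belonged to both new pairs, it would lie at an endpoint of $\alpha$ and at an endpoint of $\beta$; since both arcs emanate from the former site of $R$, this forces $R_0$ to be joined to $R$ in $D$ by two strands, one running toward each arc. I would then invoke the planar structure of the reduced alternating diagram together with the flype definition of equivalence to conclude that two twist regions joined by two such strands are equivalent, so $R_0\sim R$ in $D$, again contradicting twist-reducedness. The delicate part is precisely this last implication: making rigorous, via the planar topology of $D$, that adjacency of $R_0$ to the two distinct legs of $R$ feeding $\alpha$ and $\beta$ yields a bona fide two-strand (flype) connection between $R_0$ and $R$, while correctly disposing of the degenerate configurations — an arc with both ends leading back to the same region, or the two putative partners of $R_0$ coinciding. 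Once the smoothing-arc bookkeeping of the first two paragraphs is in place, these cases are the only real content, and handling them completes the proof that each twist region of $D_\infty$ has at most one equivalent partner, as asserted in Claim \ref{claim:1}.
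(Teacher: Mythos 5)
Your opening locality observation is correct: an equivalence circle of $D_\infty$ that misses the former site of $R$ is already an equivalence circle of $D$, so (since $D$ is twist-reduced) every equivalence in $D_\infty$ comes from a circle that runs through the smoothed region, i.e.\ through the channel between $\alpha$ and $\beta$. But the very next step --- ``concretely, one of the two strands joining $R_0$ and $R_1$ inside $C$ must be one of the arcs $\alpha,\beta$'' --- is a genuine gap, and it is the pivot on which everything else in your proposal rests. What the locality argument actually gives is only that $C$ separates $\alpha$ from $\beta$ (equivalently, that in $D$ the circle $C$ traverses $R$ as a \emph{third} twist region). It does not say that the tangle inside the flype disk consists of two parallel strands, nor that the strand of $D_\infty$ containing $\alpha$ (or $\beta$) runs crossing-free from $R_0$ to $R_1$. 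Concrete counterexample: let $D$ be the pretzel diagram $P(R_0,R,R_1)$ (reduced, twist-reduced, alternating), modified by tying a small alternating local knot into the strand joining $R_0$ to $R$ and another into the strand joining $R$ to $R_1$; this stays reduced and twist-reduced. In $D_\infty$ the new equivalence $R_0\sim R_1$ still exists (the equivalence circle runs through the channel, skirting the two local-knot tangles), yet the strand through $\alpha$ ends at a crossing of the first local knot and the strand through $\beta$ at a crossing of the second, so \emph{neither} arc joins $R_0$ to $R_1$. Your bookkeeping (``each of $\alpha,\beta$ mediates at most one new pair, read off from the twist regions reached at its two ends'') would here predict only the pairs $\{\mathrm{LK}_1,R_1\}$ and $\{R_0,\mathrm{LK}_2\}$ and would miss the actual new pair $\{R_0,R_1\}$ entirely. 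With that premise gone, the bound ``at most two new pairs'' and the final exclusion step have no support. (Your last step is in any case only a sketch by your own admission, and its key sub-claim --- two twist regions joined by two crossing-free strands are equivalent --- is itself false unless the strands attach at opposite ends of each region, which is exactly the kind of configuration analysis you defer.)

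For comparison, the paper's proof goes a different and more robust way: assuming $R_1\neq R_2$ are both equivalent to $R_0$ in $D_\infty$, with equivalence circles $C_1$ and $C_2$, it performs an appropriate connected sum (surgery) of $C_1$ and $C_2$ along their common passages, producing a single circle that traverses $R_1$ and $R_2$ while avoiding $R_0$ and the strands of $R$; this is then an equivalence circle between $R_1$ and $R_2$ \emph{in $D$ itself}, contradicting twist-reducedness of $D$. No classification of where the smoothing arcs lead is needed, which is precisely why that argument survives the examples that break yours.
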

\begin{proof}[Proof of Claim \ref{claim:1}]
Assume to the contrary that there are more than one twist regions $R_1$, $R_2$ which is twist equivalent to $R_0$, in $D_{\infty}$. Let $C_i$ ($i=1,2$) be equivalence circles between $R_0$ and $R_i$. Then by taking an appropriate connected sum of $C_1$ and $C_2$ yields an equivalence circle $C$ between $R_1$ and $R_2$ in $D$ as
\[
\begin{picture}(300,80)
\put(0,43){\line(1,0){20}}
\put(0,27){\line(1,0){20}}
\put(20,20){\framebox(30,30){}}
\put(30,30){\LARGE $R_1$}
\put(50,43){\line(1,0){16}}
\put(50,27){\line(1,0){16}}
\put(100,43){\line(1,0){20}}
\put(100,27){\line(1,0){20}}
\put(120,20){\framebox(30,30){}}
\put(130,30){\LARGE $R_0$}
\put(150,43){\line(1,0){16}}
\put(150,27){\line(1,0){16}}
\put(85,35){\oval[5](90,50)}
\put(85,12){$C_1$}
\put(200,43){\line(1,0){20}}
\put(200,27){\line(1,0){20}}
\put(220,20){\framebox(30,30){}}
\put(230,30){\LARGE $R_2$}
\put(250,43){\line(1,0){16}}
\put(250,27){\line(1,0){16}}
\put(185,35){\oval[5](90,50)}
\put(185,12){$C_2$}
\put(135,35){\oval[5](210,70)}
\put(245,5){$C$}
\end{picture} 
\]
This contradicts the assumption that $D$ is twist reduced.
\end{proof}
By Claim \ref{claim:1}
\[ \btw(D_\infty) > \frac{tw(D_\infty)}{2} = \frac{\btw(D)-1}{2}.\]
Since we have assumed that $tw(D)=\btw(D)>2$, $\btw(D_{\infty})>1$.
Therefore by induction
\[ \det(L_\infty) \geq 2\gamma^{-1}\left( \gamma^{(\btw(D)-1)/2} + ((c(D)-c(R))-\btw(D))\gamma^{(\btw(D)-3)/4} \right) \]
Therefore
\begin{align*}
\det(L) &= \det(D_0) + \det(D_\infty)\\
&> 2\gamma^{-1} \Bigl(\gamma^{\btw(D)} + (c(D)-\btw(D))\gamma^{(\btw(D)-1)/2} \\
& \qquad \qquad + (c(R)-2)(c-\btw(D)-c(R))\gamma^{(\btw(D)-1)/4}\Bigr).\\
\end{align*}

\textbf{Case 2: $D_0$ is not twist-reduced} \\

This can happen only if $c(R)=2$. In the diagram $D_0$, the twist region $R$ consists of a single crossing that is twist equivalent to the other twist region $R'$, by changing how to view the direction of twisting as  
\[
\begin{picture}(150,130)
\put(0,120){\line(1,0){10}}
\put(0,100){\line(1,0){10}}
\put(10,120){\line(1,-1){5}}
\put(30,100){\line(-1,1){5}}
\put(10,100){\line(1,1){20}}
\put(30,100){\line(1,1){20}}
\put(30,120){\line(1,-1){5}}
\put(50,100){\line(-1,1){5}}
\put(10,95){\framebox(40,30){}}
\put(23,105){\LARGE $R$}
\put(50,120){\line(1,0){10}}
\put(50,100){\line(1,0){10}}
\put(20,60){\line(0,1){10}}
\put(40,60){\line(0,1){10}}
\put(15,30){\framebox(30,30){}}
\put(20,20){\line(0,1){10}}
\put(40,20){\line(0,1){10}}
\put(25,50){\rotatebox{270}{\LARGE $R'$}}
\put(0,0){Diagram $D$}
\put(100,120){\line(1,0){15}}
\put(100,100){\line(1,0){15}}
\put(115,100){\line(1,1){20}}
\put(115,120){\line(1,-1){5}}
\put(135,100){\line(-1,1){5}}
\put(110,95){\framebox(30,30){}}
\put(120,115){\rotatebox{270}{\LARGE $R$}}
\put(135,120){\line(1,0){15}}
\put(135,100){\line(1,0){15}}
\put(115,60){\line(0,1){10}}
\put(135,60){\line(0,1){10}}
\put(110,30){\framebox(30,30){}}
\put(115,20){\line(0,1){10}}
\put(135,20){\line(0,1){10}}
\put(120,50){\rotatebox{270}{\LARGE $R'$}}
\put(95,0){Diagram $D_0$}
\put(125,75){\oval[10](80,70)}
\put(155,30){$C_0$}
\end{picture} 
\]
Let $C_0$ be the equivalence circle between $R$ and $R'$ in $D_{0}$.

In this case, $\overline{tw}(D_0)= tw(D)-1$ because when two twist regions $R_0$ and $R_1$ in $D_0$ are equivalent and $(R_0,R_1) \neq (R,R')$, then their equivalence circle $C$ yields and equivalence circle between corresponding twist regions in $D$. In particular, $\btw(D_0)>2$ since we are assuming $\btw(D)>2$.\\

In this case, we observe that unlike Case 1, $\overline{tw}(D_{\infty})$ can decrease by at most two.
\begin{claim}
\label{claim:2}
$\overline{tw}(D_{\infty})\geq tw(D)-2$. Moreover, when $D$ is not the 3-strand pretzel knot/link diagram, then $\btw(D_{\infty})>1$.
\end{claim}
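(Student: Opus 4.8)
The plan is to prove both assertions by comparing $D_{\infty}$ with $D_0$ rather than with $D$ directly, using what Case 2 already gives us: $\btw(D_0)=tw(D)-1$, and that $R$ and $R'$ share a single twist-equivalence class of $D_0$, witnessed by the equivalence circle $C_0$. First I would record the precise relation between the two diagrams: $D_{\infty}$ is obtained from $D_0$ by deleting the single remaining crossing of $R$, i.e.\ by capping off the one-crossing twist region $R$ of $D_0$ with the prescribed smoothing. Thus $D_{\infty}$ and $D_0$ agree outside a disk $B$ that is a regular neighbourhood of $R$, and inside $B$ the two strands of $R$ are replaced by two boundary-parallel arcs.

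For the first assertion I would show that this capping lowers the number of twist-equivalence classes by at most one, whence $\btw(D_{\infty})\ge \btw(D_0)-1=tw(D)-2$. The key observation is that $R$ does not carry its own class in $D_0$: its class also contains $R'$, so deleting the crossing of $R$ does not by itself destroy a class, since that class survives through $R'$. Hence any further drop in $\btw$ must come from a new phenomenon localized at $B$: either two twist regions fusing into one maximal twist region, or two twist regions becoming flype-equivalent only after the capping. Both are controlled by the two arcs created inside $B$; any new equivalence circle must pass through $B$, and two such circles could be spliced together along $B$ (exactly as in the proof of Claim \ref{claim:1}) to produce an equivalence already present in $D_0$, contradicting the fact that $R\sim R'$ is the unique coincidence of classes in $D_0$. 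This forces at most one new fusion or equivalence.

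The step that really uses the Case 2 hypothesis, and that I expect to be the main obstacle, is ruling out a more destructive collapse: a priori the two caps created at $B$ could simultaneously close off the two strands running toward $R'$ and the two strands running to the neighbouring region on the far side of $R$, trivializing two twist regions at once and dropping $\btw$ by two. This is where the flype circle $C_0$ is essential: the equivalence $R\sim R'$ forces $R$ and $R'$ to sit locally as two parallel bands joined by two parallel strands, and in this model capping $R$ necessarily splices $R'$ to the outer neighbour of $R$ (a ``cross'' cap) instead of self-capping either region. Making this local analysis precise---tracking the four endpoints of $R$, their identification with endpoints of $R'$ through $C_0$, and the effect of the fixed smoothing---is the technical heart of the argument.

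For the second assertion, suppose $\btw(D_{\infty})=1$. The first part gives $tw(D)-2\le \btw(D_{\infty})=1$, and since $tw(D)>2$ this forces $tw(D)=3$. Then $D$ has exactly three twist regions $R,R',R''$, and capping $R$ must fuse $R'$ and $R''$ into a single twist region on two strands. I would finish by showing that the parallel-band picture near $R$ supplied by $C_0$, together with the fact that deleting $R$ rejoins $R'$ and $R''$ at top and bottom into one $(2,n)$-type region, forces $D$ to be the $3$-strand pretzel diagram, contrary to the standing assumption. Identifying this surviving configuration as a genuine pretzel, rather than merely a diagram with one twist region, is the second place where a careful local-to-global argument is needed.
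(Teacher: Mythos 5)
Your reduction --- pass from $D_0$ to $D_{\infty}$ by smoothing the last crossing of $R$, show that at most one new fusion/equivalence of twist regions can appear, and deduce the ``moreover'' part from $tw(D)=3$ plus a configuration analysis --- has the same overall accounting as the paper's proof. But the one step that carries all the weight is wrong as stated. You claim that two new equivalence circles in $D_{\infty}$ ``could be spliced together along $B$ exactly as in the proof of Claim \ref{claim:1}'' to yield an equivalence already visible in $D_0$. The splice in Claim \ref{claim:1} is legitimate only because the two circles there traverse a common \emph{twist region} $R_0$, so the spliced curve again meets exactly two twist regions. Your two circles share only the empty corridor of the capped disk $B$ (which contains no crossings of $D_{\infty}$); splicing them there produces a single closed curve traversing \emph{four} twist regions, which is not an equivalence circle and contradicts nothing about $D_0$, whose only coincidence of classes is $R\sim R'$. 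Note that Claim \ref{claim:1} already rules out two new pairs sharing a region, so the case your argument must handle is precisely two new equivalences among four distinct regions --- and that is exactly the case where your splice fails. Consequently ``at most one new phenomenon'' is not established, and both assertions of Claim \ref{claim:2} collapse with it.

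The missing idea, which is the actual content of the paper's proof, is to show first that \emph{every} new equivalence circle $C$ in $D_{\infty}$ must have $R'$ as one of its two traversed regions. The paper gets this from $C_0$ and the Case 2 geometry: any new $C$ must cross the disk of $R$ through the corridor between the two caps, i.e.\ transversally to the band of $R$, whereas $C_0$ crosses that disk in the perpendicular direction (this perpendicular reading of the single remaining crossing is what makes $R$ equivalent to $R'$ in $D_0$). If neither region traversed by $C$ were $R'$, then $C$ and $C_0$ would be forced to cross inside the disk of $R$, producing the configuration drawn in the paper, which is impossible. Once every new equivalence involves $R'$, Claim \ref{claim:1} applied to $R'$ gives at most one new pair --- the statement you wanted --- and both assertions follow as you outlined. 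Your third paragraph does gesture toward extracting a local model of $R\cup R'$ from $C_0$, but you explicitly defer that analysis (and the pretzel identification) as the ``technical heart''; since that deferred step is exactly the missing argument above, the proposal as written is an outline whose critical steps are, respectively, incorrect and unproven.
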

\begin{proof}[Proof of Claim \ref{claim:2}] 
Assume that two twist regions $R_0$ and $R_1$ in $D_{\infty}$ are equivalent and let $C$ be its equivalence circle. Since $R_0$ and $R_1$ are not equivalent in $D$, $C$ must traverse the twist region $R$.
 Since the twist region $R$ and $R'$ become equivalent in $D_0$, this implies that either $R_0=R'$ or $R_1 = R'$ because otherwise in the diagram $D_{\infty}$ two equivalence circles $C$ and $C_0$ intersects in the twist region $R$ hence they form a configuration 
\[
\begin{picture}(180,100)
\put(0,90){\line(1,0){10}}
\put(0,70){\line(1,0){10}}
\put(10,65){\framebox(30,30){}}
\put(40,90){\line(1,0){10}}
\put(40,70){\line(1,0){10}}
\put(18,75){\LARGE $R_0$}
\put(0,40){\line(1,0){10}}
\put(0,20){\line(1,0){10}}
\put(10,15){\framebox(30,30){}}
\put(40,40){\line(1,0){10}}
\put(40,20){\line(1,0){10}}
\put(18,25){\LARGE $R_1$}
\put(70,90){\line(1,0){10}}
\put(70,70){\line(1,0){10}}
\put(80,65){\framebox(30,30){}}
\put(110,90){\line(1,0){10}}
\put(110,70){\line(1,0){10}}
\put(88,75){\LARGE $R$}
\put(60,55){\oval[10](70,100)}
\put(100,55){\oval[10](75,50)}
\put(15,-5){$C$}
\put(125,15){$C_0$}
%

%\put(100,140){\line(1,0){15}}
%\put(100,120){\line(1,0){15}}
%\put(115,120){\line(1,1){20}}
%\put(115,140){\line(1,-1){5}}
%\put(135,120){\line(-1,1){5}}
%\put(110,115){\framebox(30,30){}}
%\put(120,135){\rotatebox{270}{\LARGE $R$}}
%\put(135,140){\line(1,0){15}}
%\put(135,120){\line(1,0){15}}
%\put(115,80){\line(0,1){10}}
%\put(135,80){\line(0,1){10}}
%\put(110,50){\framebox(30,30){}}
%\put(115,40){\line(0,1){10}}
%\put(135,40){\line(0,1){10}}
%\put(120,70){\rotatebox{270}{\LARGE $R'$}}
%\put(95,20){Diagram $D_0$}
%\put(125,95){\oval[5](80,70)}
%\put(155,50){$C_0$}
\end{picture} 
\]
This is impossible.

By Claim \ref{claim:1}, this implies that there are at most one pair of twist regions of $D_{\infty}$ that is twist equivalent. In particular, if $\btw(D_{\infty})=1$ happens, then it means that $D$ must be the 3-strand pretzel knot/link diagram.
\end{proof}

When $tw(D_{\infty})=tw(D)-2$ by induction
\begin{align*}
\det(L) &= \det(D_0) + \det(D_\infty)\\
&>  2\gamma^{-1}\left(\gamma^{tw(D)-1} + (c(D)-tw(D))\gamma^{(tw(D)-2)/2} \right)\\
& \qquad +  2\gamma^{-1}\left(\gamma^{tw(D)-2} + (c(D)-tw(D))\gamma^{(tw(D)-3)/2} \right)\\
& = 2\gamma^{-1}\left(\gamma^{tw(D)}(\gamma^{-1}+\gamma^{-2}) + (\gamma^{-1/2}+\gamma^{-1})(c(D)-tw(D)) \gamma^{(tw(D)-1)/2} \right)
\end{align*}
Since $(\gamma^{-1}+\gamma^{-2})>1$ and $\gamma^{-1/2}+\gamma^{-1}>1$, we get
\[ \det(L) >  2\gamma^{-1}\left(\gamma^{tw(D)}+ (c-tw(D))\gamma^{(tw(D)-1)/2} \right). \]
Similarly, when $tw(D_{\infty})=tw(D)-1$ by induction $tw(D_{\infty})=tw(D)-1$
\begin{align*}
\det(L) &= \det(D_0) + \det(D_\infty)\\
&>  2\gamma^{-1}\left(\gamma^{tw(D)-1} + (c(D)-tw(D))\gamma^{(tw(D)-2)/2} \right)\\
& +  2\gamma^{-1}\left(\gamma^{tw(D)-1} + (c(D)-1-tw(D))\gamma^{(tw(D)-2)/2} \right)\\
& = 2\gamma^{-1}\left( 2\gamma^{tw(D)-1}+(2\gamma^{-1/2}(c(D)-tw(D))-\gamma^{-1/2})\gamma^{(tw(D)-1)/2} \right)
\end{align*}If $c(D)-tw(D)>1$ then 
\[ (2\gamma^{-1/2}(c(D)-tw(D))-\gamma^{-1/2}) > 3\gamma^{-1/2}>1 \]
so we conclude 
\[ \det(L) > 2\gamma^{-1}\left(\gamma^{tw(D)}+ (c(D)-tw(D))\gamma^{(tw(D)-1)
)/2} \right). \]
When $c(D)-tw(D)=1$, then
\begin{align*}
\det(L) &= 2\gamma^{-1}\left( 2\gamma^{tw(D)-1} + \gamma^{(tw(D)-2)/2} \right)\\
&=  2\gamma^{-1}\left( \gamma^{tw(D)} + (2-\gamma)\gamma^{tw(D)-1} +\gamma^{-1/2}\gamma^{(tw(D)-1)/2} \right)\\
&= 2\gamma^{-1}\left( \gamma^{tw(D)} + ((2-\gamma)\gamma^{(tw(D)-1)/2}+ \gamma^{-1/2})\gamma^{(tw(D)-1)/2} \right)\\
&>  2\gamma^{-1}\left( \gamma^{tw(D)} + \gamma^{(tw(D)-1)/2} \right).
\end{align*}
\end{proof}

To represent to what extent a twist region contains crossings we introduce the following quantity.

\begin{definition}
The \emph{density} $d(D)$ of diagram $D$ by
\[ d(D)=\max\left\{ \frac{c(R)}{c(D)} \: \middle| \: R \mbox{ is a twist region of }D \right\}. \]
The \emph{maximum twist region} $R$ is the twist region $R$ that attains the density, namely, $c(R)$ is the largest among the twist regions of $D$.
\end{definition}

We will use the following obvious estimate of the density in terms of the crossing number and twist numbers.

\begin{lemma}
\label{lemma:tw-density}
For a reduced diagram $D$, 
\[ \frac{1}{tw(D)} \leq d(D), \mbox { and, } \ \frac{tw(D)-1}{c(D)} \leq 1-d(D) \]
\end{lemma}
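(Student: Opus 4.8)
The plan is to exploit the observation that the twist regions of $D$ partition its crossings: every crossing belongs to exactly one twist region (a lone crossing that is not part of any longer twist constitutes its own trivial twist region), and each twist region is non-empty by definition. Consequently $c(D) = \sum_{R} c(R)$, where the sum runs over all $tw(D)$ twist regions of $D$. Write $R_{\max}$ for a maximum twist region, so that $d(D) = c(R_{\max})/c(D)$ by definition of the density.

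For the first inequality I would simply use that the maximum of a finite collection of numbers is at least their average. Since there are $tw(D)$ twist regions whose crossing counts sum to $c(D)$, we get $c(R_{\max}) = \max_{R} c(R) \geq c(D)/tw(D)$. Dividing by $c(D)$ yields $d(D) \geq 1/tw(D)$, as claimed.

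For the second inequality I would rewrite $1 - d(D) = (c(D) - c(R_{\max}))/c(D)$ and observe that $c(D) - c(R_{\max}) = \sum_{R \neq R_{\max}} c(R)$ is a sum over the remaining $tw(D) - 1$ twist regions. Each of these is non-empty and hence contributes at least one crossing, so $c(D) - c(R_{\max}) \geq tw(D) - 1$. Dividing by $c(D)$ gives $1 - d(D) \geq (tw(D)-1)/c(D)$.

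There is essentially no genuine obstacle here, which is why the statement is described as obvious; the only point meriting a line of care is the justification that the twist regions genuinely partition the crossings and that each is non-empty, both of which follow immediately from the definition of a twist region as a maximal non-empty block of twists. Granting this, both estimates reduce to elementary averaging and pigeonhole arguments.
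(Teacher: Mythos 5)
Your proposal is correct and follows essentially the same argument as the paper: the paper likewise lets $R$ be the maximum twist region and deduces $\frac{c(D)}{tw(D)} \leq c(R) = d(D)c(D)$ (max at least average) and $c(R) + (tw(D)-1) \leq c(D)$ (each of the remaining twist regions contributes at least one crossing). Your write-up merely makes explicit the partition of crossings into twist regions, which the paper leaves implicit.
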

\begin{proof}
Let $R$ be the maximum twist region.
Since every twist region contains at least one crossing, we have $\frac{c(D)}{tw(D)} \leq c(R) =d(D)c(D)$ and $c(R)+ (tw(D)-1) \leq c(D)$.
\end{proof}

Using the density, we get the following estimate which is quadratic with respect to $c(D)$.

\begin{proposition}
\label{prop:det-density}
Let $D$ be a reduced diagram of a knot $K$. If $\btw(D)>1$ then
\[ \det(K) \geq 2\frac{(1-d(D))\gamma^{(\btw(D)-7)/4}}{\btw(D)}c(D)^2 \]
\end{proposition}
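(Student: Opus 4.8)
The plan is to combine the two determinant estimates of Theorem~\ref{theorem:det-key} with the elementary density inequalities of Lemma~\ref{lemma:tw-density}. As in the proof of that theorem we may assume $D$ is twist-reduced, so $\btw(D)=tw(D)$; write $t=\btw(D)$, $c=c(D)$, $d=d(D)$, and let $R$ be the maximum twist region, so $c(R)=dc$. The structural observation driving everything is that the quadratic summand of the second estimate already carries the exponent appearing in the statement: absorbing the prefactor $2\gamma^{-1}$ into the power $\gamma^{(t-3)/4}$ and using $(t-3)/4-1=(t-7)/4$ turns
\[ 2\gamma^{-1}(c(R)-2)(c-t-c(R))\gamma^{(t-3)/4}=2(c(R)-2)(c-t-c(R))\gamma^{(t-7)/4}. \]
Thus the product $(c(R)-2)(c-t-c(R))$, a product of two quantities each linear in $c$, is precisely the source of the claimed quadratic growth, and the proof reduces to comparing it with the target core $\tfrac{(1-d)c^2}{t}=\tfrac{(c-c(R))c}{t}$. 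Here Lemma~\ref{lemma:tw-density} supplies exactly the two conversions needed: $d\geq 1/t$ gives $c(R)\geq c/t$, while $(t-1)/c\leq 1-d$ gives $c-c(R)\geq t-1$.

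I would then split into cases according to how concentrated the diagram is. In the main case $c(R)\geq 3$ and $c(R)\leq c-t$ (so $c-t-c(R)\geq 0$), I use the second estimate. The elementary identity
\[ (c-c(R))c(R)-(c(R)-2)(c-t-c(R))=t\,c(R)+2(c-t-c(R)) \]
shows that the product term falls short of $(c-c(R))c(R)$ only by a quantity linear in $c$; this deficit is absorbed by the two remaining positive summands $\gamma^{t}$ and $(c-t)\gamma^{(t-1)/2}$ of the estimate (after dividing by $\gamma^{(t-7)/4}$ these grow like $\gamma^{(3t+3)/4}$ and $(c-t)\gamma^{(t+1)/4}$, respectively). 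Finally $c(R)\geq c/t$ converts $(c-c(R))c(R)$ into the target $\tfrac{(c-c(R))c}{t}$, closing the inequality.

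The remaining cases are exactly those where the product term is useless, and there the first estimate suffices because the target is small. If $c(R)\leq 2$ then every twist region has at most two crossings, whence $c\leq 2t$ and $\tfrac{(1-d)}{t}c^2\leq 2c$, a linear bound beaten by $(c-t)\gamma^{(t-1)/2}$ together with $\gamma^{t}$. If instead $c(R)=c-t+1$ — the only way to have $c-t-c(R)<0$ given $c-c(R)\geq t-1$ — then the diagram is maximally concentrated, so $1-d=(t-1)/c$ and the target collapses to $\tfrac{2(t-1)}{t}c\,\gamma^{(t-7)/4}$, again linear in $c$ and handled by the first estimate.

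The main obstacle is precisely that the product term $(c(R)-2)(c-t-c(R))$ need not be nonnegative: the density bound only forces $c-t-c(R)\geq -1$, so it cannot be invoked as a black box. The crux is to recognise that its negativity occurs \emph{only} in the maximally concentrated regime $c(R)=c-t+1$, and that there the factor $1-d$ makes the target merely linear, so the always-available first estimate takes over. A secondary nuisance is that the scalar inequalities obtained after dividing by $\gamma^{(t-7)/4}$ (comparing $\gamma^{(3t+3)/4}+(c-t)\gamma^{(t+1)/4}$ against terms like $t\,c(R)$) degrade for the smallest values of $t$; as with the pretzel and $tw=2$ cases in Theorem~\ref{theorem:det-key}, I expect to dispose of $t=2,3$ by direct inspection.
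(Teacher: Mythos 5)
Your overall plan is the same as the paper's---apply the refined estimate of Theorem~\ref{theorem:det-key} to the maximum twist region and then convert via Lemma~\ref{lemma:tw-density}---but the absorption step in your main case is false, and not only ``for the smallest values of $t$.'' Your chain requires
\[
\gamma^{(3t+3)/4}+(c-t)\gamma^{(t+1)/4}\ \geq\ t\,c(R)+2\bigl(c-t-c(R)\bigr),
\]
whose right-hand side grows in $c$ with coefficient $d(t-2)+2$, which approaches $t$ when $d$ is near its main-case maximum $1-t/c$, while the left-hand side grows with coefficient $\gamma^{(t+1)/4}$, and $\gamma^{(t+1)/4}<t$ for every $t\leq 40$. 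Hence for each fixed $t\leq 40$ the claimed absorption fails once $c$ is large. Concretely, the reduced, twist-reduced alternating pretzel diagram $P(90,3,3,3,3,3,3,3,3,3)$ (a knot, since exactly one entry is even) has $t=10$, $c=117$, $c(R)=90$ and lies in your main case; there the deficit is $10\cdot 90+2\cdot 17=934$, while $\gamma^{33/4}+107\gamma^{11/4}\approx 19+284=303$. The conclusion is not in danger for this diagram: what rescues it is exactly the slack you discard at the very end, since $(c-c(R))c(R)=2430$ vastly exceeds the target $(c-c(R))c/t\approx 316$.

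The root cause is the order in which you invoke $c(R)\geq c/t$. The paper applies it asymmetrically and first: only to the factor $c(R)-2$, keeping the other factor $c-t-c(R)=(1-d)c-t$ exact, so that the product term is at least $\frac{1-d}{t}c^2+(2d-3)c+2t$ and the deficit to be absorbed is at most $3c$, uniformly in $t$ and $d$; this the summand $(c-t)\gamma^{(t+1)/4}$ handles once $\gamma^{(t+1)/4}\geq 3$, i.e.\ roughly $t\geq 12$. Your route instead trades the product for $(c-c(R))c(R)$ and only then applies $c(R)\geq c/t$, throwing away the surplus $(c-c(R))\bigl(c(R)-c/t\bigr)$, which is precisely what is needed when $d$ is large; to repair the argument you must either retain that surplus or adopt the paper's asymmetric substitution. (Your absorption does hold once $\gamma^{(t+1)/4}\geq t$, i.e.\ $t\geq 41$, which would suffice for the application in Theorem~\ref{theorem:main}, but not for the proposition as stated.) Finally, deferring $t=2,3$ to ``direct inspection'' cannot close the argument: for $t=2$ the statement itself fails---the reduced, twist-reduced alternating diagram $C(4,3)$ of the knot $7_3$ has $c=7$, $d=4/7$, yet $\det=13<21\gamma^{-5/4}\approx 13.5$---a defect inherited from Theorem~\ref{theorem:det-key}, whose refined estimate already fails on that diagram. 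On this last point your proposal and the paper's proof are in the same boat, and to your credit you do treat the cases $c(R)\leq 2$ and $c-t-c(R)<0$, which the paper's proof silently ignores; but that care does not save your main case.
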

\begin{proof}
Let $R$ be the maximum twist region. Then 
\begin{align*}
\det(K) &> 2\gamma^{-1}\Biggl(\gamma^{\btw(D)} + (c(D)-\btw(D))\gamma^{(\btw(D)-1)/2}\\ & \qquad \qquad + (c(R)-2)(c(D)-\btw(D)-c(R))\gamma^{(\btw(D)-3)/4}\Biggr) \\
& > 2\gamma^{-1}\Biggl(\gamma^{\btw(D)} + (c(D)-\btw(D))\gamma^{(\btw(D)-1)/2}\\ & \qquad \qquad + \Bigl(\frac{c(D)}{tw(D)}-2 \Bigr)\Bigl( (1-d(D))c(D)-tw(D) \Bigr)\gamma^{(\btw(D)-3)/4}\Biggr)\\
& > 2\gamma^{-1}\Biggl(\gamma^{\btw(D)} + (c(D)-\btw(D))\gamma^{(\btw(D)-1)/2}\\ & \qquad \qquad + \Bigl(\frac{1-d(D)}{tw(D)}c(D)^2+ (-3+2d(D))c(D) + 2tw(D) \Bigr)\gamma^{(\btw(D)-3)/4}\Biggr)\\
& \geq 2\frac{(1-d(D))\gamma^{(\btw(D)-7)/4}}{tw(D)}c(D)^2 
\end{align*}
\end{proof}

\section{Proof of Theorem \ref{theorem:main}}
\label{section:proof}

In this section we prove Theorem \ref{theorem:main}.
Since $\sigma(K)=2g(K)$ holds for a positive knot $K$, we may use Theorem \ref{theorem:obstruction} (ii) to show non-existence of chirally cosmetic surgeries. 

\begin{remark}
Although we use Theorem \ref{theorem:obstruction} (ii), with a bit additional effort one can use Theorem \ref{theorem:obstruction} (i) instead, because we essentially uses an estimate of $\det(K)$, and the absolute value of the right-hand side in Theorem \ref{theorem:obstruction} (i) can be removed when $K$ is a positive knot, as we have mentioned in \cite[Remark 1.11]{iis2}.
\end{remark}

As a warm-up, we observe the non-existence of chiraly cosmetic surgery when the density is not close to $1$.

\begin{proposition}
\label{prop:non-large-density}
Let $D$ be a reduced, twist-reduced special alternating diagram.
If $d(D)\leq 1-\frac{7tw(D)}{8}\gamma^{(7-tw(D))/4}$ then $K$ does not admit chirally cosmetic surgery.
\end{proposition}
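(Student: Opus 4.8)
The plan is to invoke Theorem~\ref{theorem:obstruction}(ii). Since $K$ is special alternating, up to replacing $K$ by its mirror image (which affects neither the existence of chirally cosmetic surgeries nor the quantities $tw(D)$ and $d(D)$) we may assume that $D$ is a positive diagram, so $K$ is a positive knot and $\sigma(K)=2g(K)$. By \eqref{eqn:v3-positive} we have $4v_3(K)\geq g(K)\geq 1>0$, so the standing hypotheses of Theorem~\ref{theorem:obstruction} hold. Writing
\[ L=\frac{7a_2(K)^2-a_2(K)-10a_4(K)}{4v_3(K)}, \qquad M=\tfrac12\det(K)+3g(K)-\tfrac52, \]
it suffices to prove the strict inequality $L<M$: then $L\neq M$, and Theorem~\ref{theorem:obstruction}(ii) gives the conclusion once we check that $K$ is not a $(2,p)$-torus knot.

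First I would bound $L$ from above. If the numerator $7a_2(K)^2-a_2(K)-10a_4(K)$ is non-positive then $L\leq 0<M$ and there is nothing to prove, so assume it is positive. Since $a_2(K),a_4(K)\geq 0$ for a positive knot, the numerator is at most $7a_2(K)^2$, while Lemma~\ref{lemma:stoimenow-improvement} gives $4v_3(K)\geq a_2(K)>0$. Hence, using the optimal bound of Proposition~\ref{proposition:optimal-a_2} and the fact that $c(K)=c(D)$ for a reduced alternating diagram,
\[ 0<L\leq \frac{7a_2(K)^2}{4v_3(K)}\leq 7a_2(K)\leq 7\cdot\frac{c(D)^2-1}{8}<\frac{7}{8}c(D)^2 . \]

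Next I would bound $M$ from below via the determinant. The hypothesis forces $tw(D)>1$: if $tw(D)=1$ then $1-\frac{7tw(D)}{8}\gamma^{(7-tw(D))/4}$ is negative while $d(D)>0$, a contradiction. In particular $\btw(D)=tw(D)>1$, so Proposition~\ref{prop:det-density} applies, and the assumption $d(D)\leq 1-\frac{7tw(D)}{8}\gamma^{(7-tw(D))/4}$ rearranges to $\frac{(1-d(D))\gamma^{(tw(D)-7)/4}}{tw(D)}\geq \frac78$. Therefore
\[ \det(K)\geq 2\,\frac{(1-d(D))\gamma^{(\btw(D)-7)/4}}{\btw(D)}\,c(D)^2\geq \frac{7}{4}c(D)^2 . \]
Since $g(K)\geq 1$ gives $3g(K)-\frac52\geq\frac12>0$, we obtain $M>\tfrac12\det(K)\geq \frac{7}{8}c(D)^2>L$, which is the desired strict inequality.

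Finally, $K$ is not the $(2,p)$-torus knot, since for special alternating knots $tw(K)=1$ characterises these torus knots while $tw(K)=tw(D)>1$ for our reduced, twist-reduced diagram. Hence Theorem~\ref{theorem:obstruction}(ii) shows $K$ admits no chirally cosmetic surgery. The argument is mostly bookkeeping of the earlier estimates; the only delicate point is that the constant $\tfrac78$ appearing in the hypothesis is calibrated exactly so that the determinant bound of Proposition~\ref{prop:det-density} yields $\det(K)\geq\frac74 c(D)^2$, precisely matching the upper bound $L<\frac78 c(D)^2$, with strictness guaranteed by the $-1$ in the optimal $a_2$-estimate.
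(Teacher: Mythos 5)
Your proof is correct and follows essentially the same route as the paper: bound the ratio above by $7a_2(K)\leq\frac{7}{8}c(D)^2$ via positivity, Lemma~\ref{lemma:stoimenow-improvement} and the $a_2$-estimate, then use the density hypothesis together with Proposition~\ref{prop:det-density} to dominate this by $\frac{1}{2}\det(K)$, and conclude with Theorem~\ref{theorem:obstruction}(ii). Your extra bookkeeping (checking $tw(D)>1$ so that Proposition~\ref{prop:det-density} applies, securing strictness, and ruling out the $(2,p)$-torus knots) only makes explicit details the paper leaves implicit.
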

\begin{proof}
This is a consequence of estimates of $a_2,v_3,a_4$ and $\det$ established so far.
\begin{align*}
&\frac{7a_2(K)^2-a_2(K)-10a_4(K)}{4v_3(K)}\\
&\quad \leq \frac{7a_2^2(K)}{4v_3(K)} & (\because \eqref{eqn:a2n-positive}, \eqref{eqn:v3-positive}) \\
&\quad \leq 7a_2(K) & (\because \mbox{Lemma } \ref{lemma:stoimenow-improvement}) \\
&\quad \leq  \frac{7}{8}c(D)^{2} & (\because \mbox{Proposition } \ref{proposition:estimate-conway})\\
&\quad\leq \frac{(1-d)\gamma^{(\btw(D)-7)/4}}{tw(D)}c(D)^2 & (\because \mbox{Assumption})\\
&\quad\leq \frac{1}{2}\det(K) & (\because \mbox{Proposition } \ref{prop:det-density})\\
&\quad \leq \frac{1}{2}\det(K) +3g(K)-\frac{5}{2}
\end{align*}
Thus by Theorem \ref{theorem:obstruction} (ii), $K$ admits no chirally cosmetic surgeries.
\end{proof}

For later use we restate Proposition \ref{prop:non-large-density} in terms of a condition on $c(R)$.

\begin{corollary}
\label{cor:non-large-density-crossing}
Let $D$ be a reduced, twist-reduced special alternating diagram and let $R$ be its maximum twist region. If $c(R) \leq (tw(D)-1)\left( 1-\frac{7tw(D)}{8}\gamma^{(7-tw(D))/4}\right)$ then $K$ admits no chirally cosmetic surgeries. 
\end{corollary}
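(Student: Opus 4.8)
The plan is to derive Corollary \ref{cor:non-large-density-crossing} directly from Proposition \ref{prop:non-large-density} by translating the density hypothesis $d(D) \le 1 - \frac{7\,tw(D)}{8}\gamma^{(7-tw(D))/4}$ into the stated bound on $c(R)$. Since $d(D) = c(R)/c(D)$ by definition of the density and of the maximum twist region $R$, the density hypothesis is equivalent to $c(R) \le \bigl(1 - \frac{7\,tw(D)}{8}\gamma^{(7-tw(D))/4}\bigr)c(D)$. The task thus reduces to showing that the crossing hypothesis $c(R) \le (tw(D)-1)\bigl(1-\frac{7\,tw(D)}{8}\gamma^{(7-tw(D))/4}\bigr)$ implies the density hypothesis.

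The key observation is the elementary relation between $c(R)$, $tw(D)$, and $c(D)$ supplied by Lemma \ref{lemma:tw-density}: since every twist region contains at least one crossing, the maximum twist region and the remaining $tw(D)-1$ twist regions together give $c(R) + (tw(D)-1) \le c(D)$, so that $c(D) \ge c(R) + tw(D) - 1$. First I would write $\lambda = 1 - \frac{7\,tw(D)}{8}\gamma^{(7-tw(D))/4}$ for the common factor appearing in both hypotheses, so the density hypothesis reads $c(R) \le \lambda\, c(D)$ and the crossing hypothesis reads $c(R) \le \lambda\,(tw(D)-1)$. Substituting the lower bound $c(D) \ge c(R) + tw(D)-1$ into the target inequality $c(R) \le \lambda\, c(D)$, it suffices to check that $c(R) \le \lambda\bigl(c(R) + tw(D)-1\bigr)$, and after rearranging (using $\lambda < 1$, hence $1-\lambda > 0$) this is equivalent to $c(R) \le \frac{\lambda}{1-\lambda}(tw(D)-1)$.

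Here the only subtlety is to confirm that the hypothesis $c(R) \le \lambda(tw(D)-1)$ of the corollary indeed forces $c(R) \le \frac{\lambda}{1-\lambda}(tw(D)-1)$; but since $0 < \lambda < 1$ we have $\frac{\lambda}{1-\lambda} \ge \lambda$, so the crossing hypothesis is in fact the stronger of the two and the implication holds immediately. (If instead one interprets the corollary as directly imposing $c(R) \le \lambda(tw(D)-1)$ and wishing to conclude $c(R)\le \lambda\,c(D)$, the same chain $c(R)\le \lambda(tw(D)-1) \le \lambda(c(D)-c(R)) \le \lambda\,c(D)$ using $c(R)+(tw(D)-1)\le c(D)$ closes the argument even more directly.) Either way, once $c(R) \le \lambda\, c(D)$ is established, dividing by $c(D) > 0$ yields $d(D) = c(R)/c(D) \le \lambda$, which is precisely the hypothesis of Proposition \ref{prop:non-large-density}.

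I expect the main (and only genuine) obstacle to be bookkeeping rather than mathematical depth: one must keep $\lambda$ positive for the argument to make sense, i.e. verify that $1 - \frac{7\,tw(D)}{8}\gamma^{(7-tw(D))/4} > 0$ in the relevant range of $tw(D)$ (otherwise the hypotheses of the corollary are vacuous and the statement holds trivially, so no harm is done). With that sign condition in hand, invoking Proposition \ref{prop:non-large-density} concludes that $K$ admits no chirally cosmetic surgeries, completing the proof.
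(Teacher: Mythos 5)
Your proposal is correct and takes essentially the same route as the paper: both convert the crossing-count hypothesis into the density hypothesis of Proposition \ref{prop:non-large-density} using $d(D)=c(R)/c(D)$ together with the counting relation $c(R)+(tw(D)-1)\le c(D)$. If anything, your bookkeeping (including the observation that the statement is vacuous when $1-\frac{7tw(D)}{8}\gamma^{(7-tw(D))/4}\le 0$) is laid out more carefully than the paper's own one-line inequality chain.
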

\begin{proof}
Since $c(R)+(tw(D)-1) \leq c(D) = \frac{c(R)}{d(D)}$, we have $tw(D)-1 \leq c(R) \frac{1-d(D)}{d(D)}$. Thus by assumption
\begin{align*}
d(D) & < \frac{d(D)}{1-d(D)} \frac{c(R)}{tw(D)-1} \leq 1-\frac{7tw(D)}{8}\gamma^{(7-tw(D))/4}
\end{align*}
so $K$ admits no chirally cosmetic surgeries.
\end{proof}

To study the remaining case where the density $d(D)$ is close to $1$, we need somewhat finer estimate of $a_2(K)$ and $v_3(K)$. To this end, we distinguish two cases of twist regions. We say that a twist region $R$ with $c(R)>1$ is \emph{incoherent} if two strands of $R$ are oppositely oriented like $\raisebox{-3mm}{\begin{picture}(100,30)
\put(0,0){\line(1,1){24}}
\put(10,14){\vector(-1,1){10}}
\put(14,10){\line(1,-1){10}}
\put(24,24){\line(1,-1){10}}
\put(24,0){\line(1,1){24}}
\put(38,10){\line(1,-1){10}}
\put(52,10){$\cdots$} 
\put(68,0){\vector(1,1){24}}
\put(68,24){\line(1,-1){10}}
\put(82,10){\line(1,-1){10}}
\end{picture}}$ 
Otherwise, when two strands of $R$ are oriented in the same direction like 
$\raisebox{-3mm}{\begin{picture}(100,30)
\put(24,24){\vector(-1,-1){24}}
\put(10,14){\vector(-1,1){10}}
\put(14,10){\line(1,-1){10}}
\put(24,24){\line(1,-1){10}}
\put(24,0){\line(1,1){24}}
\put(38,10){\line(1,-1){10}}
\put(52,10){$\cdots$} 
\put(68,0){\line(1,1){24}}
\put(68,24){\line(1,-1){10}}
\put(82,10){\line(1,-1){10}}
\end{picture}}$ we say that the twist region $R$ is \emph{coherent}.

\begin{lemma}
\label{lemma:coherent}
Let $D$ be a positive alternating diagram and put $c=c(D)$ and $d=d(D)$. Assume that the maximum twist region $R$ is coherent.
If
\[  \frac{(c(R)-1)(c(R)^{2}-2c(R))}{24c(R)^{3}} \geq X \]
for some $X>0$
then
\[ \frac{7a_2(K)^2-a_2(K)-10a_4(K)}{4v_3(K)} \leq  \frac{7c(D)}{64Xd(D)^3} %\leq 21c(D)tw(D)^{3} 
\]
\end{lemma}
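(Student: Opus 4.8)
The plan is to bound the numerator crudely by $7a_2(K)^2$ and reduce the whole inequality to a single lower bound for $4v_3(K)$ that comes entirely from the coherent twist region $R$. Since $K$ is positive we have $a_2(K)>0$, $a_4(K)\geq 0$ and $4v_3(K)>0$ by \eqref{eqn:v3-positive}, so $7a_2(K)^2-a_2(K)-10a_4(K)\leq 7a_2(K)^2$; if this quantity is negative the asserted inequality is immediate, so assume it is positive. Writing $c=c(D)$ (an alternating diagram realizes the crossing number) and using $a_2(K)\leq c^2/8$ from Proposition \ref{proposition:estimate-conway}, I get
\[
\frac{7a_2(K)^2-a_2(K)-10a_4(K)}{4v_3(K)}\leq \frac{7a_2(K)^2}{4v_3(K)}\leq \frac{7c^4}{64\cdot 4v_3(K)}.
\]
Since $d=c(R)/c$, the target right-hand side is $\frac{7c}{64Xd^3}=\frac{7c^4}{64X\,c(R)^3}$, so the lemma reduces to proving $4v_3(K)\geq X\,c(R)^3$. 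The hypothesis gives $X\leq \frac{(c(R)-1)(c(R)^2-2c(R))}{24\,c(R)^3}$, and the latter bound multiplied by $c(R)^3$ is exactly $\frac{c(R)(c(R)-1)(c(R)-2)}{24}$, so it suffices to establish the clean estimate
\[
4v_3(K)\geq \frac{c(R)(c(R)-1)(c(R)-2)}{24}. \qquad (\star)
\]

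The heart is $(\star)$, which I would extract from the Gauss diagram formula $4v_3(K)=\langle \mathcal{C}_A+\mathcal{C}_B,G_D\rangle$. For a positive diagram every crossing has sign $+$, so $\varepsilon(G')=+1$ for every sub-diagram $G'$; moreover the negatively-signed term of $\mathcal{C}_B$ never matches and the coefficients of $\mathcal{C}_A$ are all $+$, hence every sub-diagram contributes non-negatively to the pairing. Splitting the sum according to whether all participating arrows lie in $R$ therefore yields
\[
4v_3(K)\geq \langle \mathcal{C}_A+\mathcal{C}_B,\,G_R\rangle,
\]
where $G_R$ is the sub-Gauss diagram spanned by the $c(R)$ arrows of $R$. (One may place the basepoint just before $R$, which is legitimate because the left-hand side is a basepoint-independent invariant and the displayed inequality holds for every basepoint.)

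This is where coherence is used. Because the two strands of $R$ are coherently oriented, tracing the knot through $R$ visits its $c(R)$ crossings in two successive passes, so on the circle the corresponding arrows occupy two consecutive blocks and are pairwise linked; with the basepoint placed just before $R$ this is precisely the Gauss diagram of the closure of the $2$-braid $\sigma_1^{c(R)}$, i.e.\ the standard $(2,c(R))$-torus diagram. For odd $c(R)$ this is a knot, so $\langle \mathcal{C}_A+\mathcal{C}_B,G_R\rangle=4v_3(T(2,c(R)))=\frac{c(R)^3-c(R)}{24}$, which is $\geq \frac{c(R)(c(R)-1)(c(R)-2)}{24}$ because $c(R)+1\geq c(R)-2$; this proves $(\star)$. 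For even $c(R)$ the identical combinatorial pairing is evaluated on the same two-block arrow diagram (now the $(2,c(R))$-torus link) and is again at least $\frac{c(R)(c(R)-1)(c(R)-2)}{24}$. I expect this final step—verifying that the arrows of a coherent twist region realize exactly the two-block linked pattern of the $2$-braid closure, together with the even-index computation—to be the main obstacle; the reductions in the first two paragraphs are routine given the estimates already proved, and note in particular that the degree-three part $\mathcal{C}_A$ is genuinely needed, since the $a_2$ contribution alone grows only quadratically in $c(R)$.
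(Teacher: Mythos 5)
Your proposal is correct and follows essentially the same route as the paper: drop $-a_2-10a_4$ by positivity, bound $a_2(K)\leq c^2/8$, and bound $4v_3(K)$ from below by the contribution of the coherent twist region, using that every term in the Gauss diagram pairing $\langle \mathcal{C}_A+\mathcal{C}_B,G_D\rangle$ is non-negative for a positive diagram. The one step you flag as the main obstacle (even $c(R)$) is handled in the paper by exactly the restriction principle you already invoke: it sets $c'_R$ equal to the largest odd integer with $c'_R\leq c(R)$, discards one arrow of $R$ if necessary, and pairs against the remaining mutually crossing arrows, which form the Gauss diagram of the standard $(2,c'_R)$-torus knot diagram, giving $4v_3(K)\geq \frac{c'_R\left((c'_R)^2-1\right)}{24}\geq \frac{c(R)(c(R)-1)(c(R)-2)}{24}$ --- which is precisely why the hypothesis is written with $(c(R)-1)(c(R)^2-2c(R))$, so your inequality $(\star)$ holds with no separate even-index computation needed.
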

\begin{proof}
In the following, we denote by $c'_R$ the maximum odd integer such that $c'_R \leq c(R)$ (so $c(R)-1 \leq c'_R \leq c(R)$). 
 
We take a base point $\ast$ near the maximum twist region $R$.
When the maximum twist region $R$ is coherent, in the Gauss diagram $G_D$, the arrows from $R$ forms a sub Gauss-diagram of the form
\[\begin{picture}(100,45)
\put(80,45){\line(-1,0){80}}
\put(2,2){$\ast$}
\put(0,5){\line(1,0){80}}
\put(70,45){\vector(-6,-4){60}}
\put(20,5){\vector(1,1){40}}
\put(50,45){\vector(-1,-2){20}}
\put(40,10){$\cdots$}
\put(25,35){$\cdots$}
\put(70,5){\vector(-6,4){60}}
\end{picture}
\]
In particular, $G_D$ contains a sub-Gauss diagram $G_{T_{2,c'_R}}$ where $T_{2,c'_R}$ is the standard $c'_R$-crossing diagram of the $(2,c'_R)$-torus knot. Thus by assumption
\begin{align*}
4v_3(K) & \geq v_3(T_{2,c'_R}) = \frac{c'_R(c'_R{}^{\!2}-1)}{24} \\
& \geq \frac{(c(R)-1)(c(R)^{2}-2c(R))}{24} & (\because c'_R\geq c(R)-1)\\
& \geq Xc(R)^{3} & (\because \mbox{Assumption})\\
\end{align*}
%Since {\color{dg} $c_R>10$}, $\frac{(c(R)-1)(c(R)^{2}-2c(R))}{24} \geq {\color{dg} X c(R)^{3}}$ so
%\begin{equation}
%\label{eqn:v3-estimate}
%4v_3(K) \geq {\color{dg} X c_R^{3}}
%\end{equation}
Therefore
\begin{align*}
\frac{7a_2(K)^2-a_2(K)-10a_4(K)}{4v_3(K)} 
&\leq \frac{7a_2(K)^2}{4v_3(K)} & (\because \eqref{eqn:a2n-positive}, \eqref{eqn:v3-positive}) \\
& \leq \frac{7c(D)^{4}}{64 X c(R)^{3}}& (\because \mbox{Proposition } \ref{proposition:estimate-conway}%\eqref{eqn:v3-estimate}
)\\
& = \frac{7c(D)}{64X d(D)^{3}} & (\because c(R)=c(D)d(D))\\
%& \leq 21c(D)tw(D)^{3} & (\because \mbox{Lemma }\ref{lemma:tw-density})
\end{align*}

\end{proof}

\begin{lemma}
\label{lemma:incoherent}
Let $D$ be a positive alternating diagram of a knot $K$ and put $c=c(D)$ and $d=d(D)$. Assume that the maximum twist region $R$ is incoherent and that
\[ \frac{16c(R)}{(c(R)-2)(c(R)-4)} \leq Y \]
for some $Y>0$.
Then
\[ \frac{7a_2(K)^2-a_2(K)-10a_4(K)}{4v_3(K)} \leq \frac{7+Y}{2}(1-d(D))^{2}c(D)^{2}\]
\end{lemma}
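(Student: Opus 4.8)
The plan is to imitate the template of the coherent case (Lemma \ref{lemma:coherent}): place the base point $\ast$ next to the maximal twist region $R$, discard the non-positive part of the numerator by positivity, and then pit an upper bound for $a_2(K)$ against a lower bound for $4v_3(K)$. Concretely, since $K$ is positive we have $a_2(K),a_4(K)\ge 0$ by \eqref{eqn:a2n-positive}, so $7a_2(K)^2-a_2(K)-10a_4(K)\le 7a_2(K)^2$, and it suffices to bound $\dfrac{7a_2(K)^2}{4v_3(K)}$. The whole point is that incoherence of $R$ produces a Gauss-diagram picture opposite to the coherent one, which simultaneously shrinks $a_2$ and degrades the available lower bound for $v_3$ from cubic to quadratic in $c(R)$.

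First I would isolate the combinatorial input. With $\ast$ adjacent to $R$, each of the two strands of $R$ is traversed once, so the $c(R)$ arrows of $R$ occupy two arcs $B_1,B_2$ of the based circle and the rest of $K$ is traversed along two complementary arcs $E_1,E_2$. Because $R$ is incoherent the two strands run oppositely, and a direct check of the resulting endpoint order shows that the $R$-arrows are pairwise \emph{nested}; in particular no two of them cross (this is exactly opposite to the coherent case, where they are mutually crossing and give the $(2,k)$-torus pattern). Since the formula \eqref{eqn:a2} for $a_2(K)$ counts signed pairs of \emph{crossing} chords, every contributing pair must involve at least one of the $c(D)-c(R)$ arrows lying outside $R$. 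Moreover an outside arrow can cross an $R$-arrow only if it joins $E_1$ to $E_2$ (a \emph{spanning} arrow), and in that case it crosses the entire nested family. This yields the dichotomy on which the argument turns.

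If there is no spanning arrow, then there are no outside–$R$ crossing pairs at all, so $a_2(K)\le \binom{c(D)-c(R)}{2}$, which is of order $(1-d(D))^2c(D)^2$; the desired inequality then follows immediately from $4v_3(K)\ge a_2(K)$ (Lemma \ref{lemma:stoimenow-improvement}) and $c(R)=d(D)c(D)$. If a spanning arrow exists, the count of crossing pairs is dominated by outside–$R$ pairs and is of order $(c(D)-c(R))\,c(R)$, so $a_2(K)$ only gains a single factor $(1-d(D))$ rather than two; to compensate I need the matching lower bound $4v_3(K)\gtrsim c(R)^2$. This is the crux, and it is where the hypothesis $\tfrac{16c(R)}{(c(R)-2)(c(R)-4)}\le Y$ is meant to be used. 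Here I cannot quote a torus knot; instead I would use the Gauss formula $4v_3=\langle \mathcal{C}_A+\mathcal{C}_B,G_D\rangle$, where for a positive diagram $\langle\mathcal{C}_B,G_D\rangle=a_2(K)\ge0$ and $\langle\mathcal{C}_A,G_D\rangle\ge0$, each pattern of $\mathcal{C}_A$ consisting of two crossing arrows plus a third arrow. Taking a fixed spanning arrow together with one $R$-arrow as the two crossing arrows, and letting the third arrow run over the remaining $R$-arrows, produces on the order of $(c(R)-2)(c(R)-4)$ admissible $\mathcal{C}_A$-configurations, whence $4v_3(K)\gtrsim (c(R)-2)(c(R)-4)$.

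Finally I would substitute the two estimates into $\dfrac{7a_2(K)^2}{4v_3(K)}$ and rewrite everything via $c(R)=d(D)c(D)$ and $c(D)-c(R)=(1-d(D))c(D)$; the quotient then takes the form (constant)$\cdot(1-d(D))^2c(D)^2$, and the hypothesis on $Y$ is precisely what converts the ratio $c(R)^2/\big((c(R)-2)(c(R)-4)\big)$ together with the lower-order error terms into the stated constant $\tfrac{7+Y}{2}$. I expect the main obstacle to be this quadratic lower bound for $v_3$: one must pin down exactly which of the four $\mathcal{C}_A$-patterns the spanning-arrow/$R$-arrow/$R$-arrow triples realize (and with what signs), so as to make the constant in $4v_3(K)\gtrsim (c(R)-2)(c(R)-4)$ large enough to dominate the square of the $a_2$-estimate and land at $\tfrac{7+Y}{2}$ rather than a weaker constant.
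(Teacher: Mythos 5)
Your combinatorial reading of the incoherent twist region matches the paper's exactly: the $R$-arrows are pairwise parallel/nested, so any contribution to $a_2$ or $4v_3$ involving $R$ must use an outside arrow crossing the whole family, and the quadratic-in-$c(R)$ gain for $4v_3$ comes from $\mathcal{C}_A$-configurations built from one such arrow plus two parallel $R$-arrows. (Your ``no spanning arrow'' case is harmless but vacuous: in a reduced diagram every arrow is crossed by some other arrow, which is why the paper can assert $\ell>0$.) The genuine gap is in the spanning case, and it is not the sign/pattern bookkeeping you flag as the main obstacle: it is that your lower bound for $4v_3(K)$ comes from a \emph{single} fixed spanning arrow, so it does not scale with the number $s$ of spanning arrows, while your upper bound for $a_2(K)$ contains the term $s\,c(R)$. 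Squaring and dividing, your method produces the term $7s^2c(R)^2/(4v_3(K))$, which is at least $7s^2$ even under the most generous one-arrow count (and about $56\,s^2$ with the realistic count $\binom{c(R)/2-1}{2}\approx c(R)^2/8$); since $s$ can be as large as $c(D)-c(R)=(1-d)c(D)$, you land at roughly $56\,(1-d)^2c(D)^2$, never below $7(1-d)^2c(D)^2$. The statement requires the constant $\tfrac{7+Y}{2}$, and $Y$ is small precisely in the regime of interest (the minimal admissible $Y$ tends to $0$ as $c(R)\to\infty$, and $Y<\tfrac14$ in the proof of Theorem \ref{theorem:main}), so this route cannot reach the claimed inequality. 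A second instance of the same defect: your outside--outside term $\binom{c(D)-c(R)}{2}^2/(4v_3(K))$, divided by a bound of order $c(R)^2$, is of order $\tfrac{(1-d)^4}{d^2}c(D)^2$, which also exceeds $\tfrac{7+Y}{2}(1-d)^2c(D)^2$ unless $d$ is close to $1$ --- a hypothesis the lemma does not have.

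The paper avoids both problems by decomposing \emph{both} sides and matching terms, rather than pitting one aggregate bound against another. It introduces the base diagram $D_{base}$ ($D$ with the twist region $R$ shrunk as far as possible while staying a twist-reduced knot diagram) and proves $a_2(K)\le a_2(K_{base})+\ell\,c(R)/2$ together with $4v_3(K)\ge 4v_3(K_{base})+\ell\binom{c(R)/2-1}{2}$, where $\ell$ (essentially your $s$) counts \emph{all} left arrows meeting $R$, so the $v_3$-gain is proportional to $\ell$. In the expansion of $\bigl(a_2(K_{base})+\ell c(R)/2\bigr)^2$, the pure base term is divided by $4v_3(K_{base})$ and bounded by $a_2(K_{base})\le\tfrac18 c(D_{base})^2\approx\tfrac18(1-d)^2c(D)^2$ via Lemma \ref{lemma:stoimenow-improvement} applied to the positive diagram $D_{base}$ --- this is what keeps the leading constant at $7$ for all $d$ --- while the cross terms are divided by $\ell\binom{c(R)/2-1}{2}$, where the factors of $\ell$ cancel and the hypothesis $\frac{16c(R)}{(c(R)-2)(c(R)-4)}\le Y$ converts them into at most $Y a_2(K_{base})$ plus lower-order terms. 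To repair your argument you would need to (i) let the spanning arrow vary, making your $v_3$ bound proportional to $s$, and (ii) replace the crude bound $\binom{c(D)-c(R)}{2}$ by $a_2(K_{base})$ and play it against $4v_3(K_{base})$ via Lemma \ref{lemma:stoimenow-improvement}; at that point you will have reconstructed the paper's proof.
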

\begin{proof}

We take a base point $\ast$ near the maximum twist region $R$.
When the maximum twist region $R$ is coherent, in the Gauss diagram $G_D$, the arrows from $R$ forms parallel arcs as 
\[\begin{picture}(100,45)
\put(80,45){\line(-1,0){80}}
\put(2,2){$\ast$}
\put(0,5){\line(1,0){80}}
\put(10,45){\vector(0,-1){40}}
\put(20,5){\vector(0,1){40}}
\put(30,45){\vector(0,-1){40}}
\put(70,45){\vector(0,-1){40}}
\put(40,20){$\cdots$}
\end{picture}
\]
Thus the sub-Gauss diagram of $G_D$ from two or three arrows from $R$ does not contribute to $a_2(K)$ or $v_3(K)$.

For a positive alternating diagram $D$ that represents a knot, the \emph{base diagram} $D_{base}$ with respect to the twist region $R$ is the knot diagram having the following properties;
\begin{itemize}
\item[(i)] $D_{base}$ is the same as $D$ except at the twist region $R$.
\item[(ii)] $D_{base}$ is twist-reduced and represents a knot.
\item[(iii)] The number of crossings in the twist region $R$ is minimum among all diagrams satisfying (i) and (ii). 
\end{itemize}
Roughly speaking, $D_{base}$ is a knot diagram obtained from $D$ by removing crossings in the twist region $R$ as possible, preserving the twist-reducedness.
In particular, $D_{base}$ is also a positive and alternating diagram.
Let $K_{base}$ be the knot represented by $D_{base}$. 
%Then
%\begin{equation}
%a_2(K_{base})\geq g(K_{base}) = g(K) \geq \frac{c(K)}{4}
%\end{equation}

Let $\ell$ be the number of left arrows that intersects with arrows from $R$. Since $D$ is reduced and positive, $\ell> 0$. 

First we note that since all arrows from the twist regions are parallel, in the Gauss diagram formula \eqref{eqn:a2-formula} sub-Gauss diagram from two arrows in the twist region has no contribuition.

Since the number of the left arrows that forms a Gauss diagram $
\raisebox{-4mm}{
\begin{picture}(24,28)
\put(12,14){\circle{24}}
\put(9,0){$\ast$}
\put(21,22){\vector(-1,-1){17}}
\put(3,22){\vector(1,-1){17}}
%\put(10,26){\vector(1,-1){14}}
\end{picture} 
}
$
together with a right arrow coming from the crossings in the twist region $R$ is $\ell$, we get
\begin{equation}
\label{eqn:a2-base}
a_2(K) \leq a_2(K_{base}) + \ell \frac{c(R)}{2}
\end{equation}

Similarly, by counting the number of sub-Gauss diagram 
$
\raisebox{-4mm}{
\begin{picture}(24,28)
\put(12,14){\circle{24}}
\put(9,0){$\ast$}
\put(21,22){\vector(-1,-1){17}}
\put(3,22){\vector(1,-1){17}}
\put(10,26){\vector(1,-1){14}}
\end{picture} 
}
$
in the Gauss diagram formula of $4v_3$ so that two paralell right arrow $
\raisebox{-4mm}{
\begin{picture}(24,28)
\put(12,14){\circle{24}}
\put(9,0){$\ast$}
%\put(21,22){\vector(-1,-1){17}}
\put(3,22){\vector(1,-1){17}}
\put(10,26){\vector(1,-1){14}}
\end{picture} 
}
$ comes from the crossings in the twist region $R$ that does not belong to $D_{base}$ (there are at most one such crossings) we get

\begin{equation}
\label{eqn:v3-base}
4v_3(K) \geq v_3(K_{base}) + \ell \binom{c(R)/2-1}{2} 
\end{equation}

Thus
\begin{align*}
&\frac{7a_2(K)^{2}-a_2(K)-10a_4(K)}{4v_3(K)} \\
&\quad \leq \frac{7a_2(K)^{2}}{4v_3(K)} & (\because \eqref{eqn:a2n-positive}, \eqref{eqn:v3-positive}) \\
&\quad  \leq 7\frac{(a_2(K_{base}) + \ell c(R)/2)^2}{4v_3(K)} & (\because \eqref{eqn:a2-base})\\
&\quad =  7 \left( \frac{a_2(K_{base})^2}{4v_3(K)} + \frac{a_2(K)\ell c(R) + c(R)^2/4}{4v_3(K)}  \right) \\
&\quad  \leq 7 \left(  \frac{a_2(K_{base})^2}{4v_3(K_{base})} + \frac{a_2(K_{base})\ell c(R) + c(R)^2/4}{ \ell \binom{c(R)/2 -1}{2}} \right) & (\because \eqref{eqn:v3-base})\\
&\quad  \leq 7 \left(a_{2}(K_{base}) + a_2(K_{base})\frac{8c(R)}{(c(R)-2)(c(R)-4)} + \frac{c(R)}{\ell(c(R)-2)}\right)& (\because \mbox{Proposition } \ref{proposition:estimate-conway})\\
\end{align*}
Since 
\begin{align*}
&a_2(K_{base})\frac{8c(R)}{(c(R)-2)(c(R)-4)} + \frac{c(R)}{\ell(c(R)-2)} = a_2(K)\frac{c(R)}{(c(R)-2)}\left( \frac{8}{c(R)-4} + \frac{1}{\ell a_2(K)}\right)\\
&\quad \leq a_2(K_{base})\frac{c(R)}{(c(R)-2)}\left( \frac{8}{c(R)-4} + \frac{4}{c(R)}\right)\\
&\quad \leq a_2(K_{base})\frac{16c(R)}{(c(R)-2)(c(R)-4)}\\
&\quad \le Ya_2(K_{base})
\end{align*}
we get 
\begin{align*}
&\frac{7a_2(K)^{2}-a_2(K)-10a_4(K)}{4v_3(K)} \leq (7+Y)a_2(K_{base}) \\
&\quad  \leq \frac{7+Y}{8}c(D_{base})^2  &(\because \mbox{Proposition } \ref{proposition:estimate-conway})\\
&\quad  \leq \frac{7+Y}{8}\bigl( (1-d(D))c(D)+3\bigr) ^{2} &(\because c(D_{base}) \leq c(D)-c(R)+3 )\\
& \quad \leq   \frac{7+Y}{4}(1-d(D)^{2})c(D)^{2} % &(\because 3 \leq (1-d(D))c(D))
\end{align*}
Here the last inequality follows from 
\[ 3 \leq (\sqrt{2}-1)8 \leq (\sqrt{2}-1)(tw(D)-1) \leq (\sqrt{2}-1)(1-d(D))c(D) \]
since we assume $tw(D)\geq 9$.
\end{proof}

These estimate completes the proof of our main theorem.

\begin{proof}[Proof of Theorem \ref{theorem:main}]
Let $D$ be a reduced, twist-reduced positive alternating diagram of $K$ such that $tw(D)=\btw(D)=tw(K)$. 

Let $R$ be the maximum twist region. By Proposition \ref{prop:non-large-density} and Corollary \ref{cor:non-large-density-crossing} we may assume that
\begin{equation}
\label{eqn:1-d-bound}
1-d(D) < \frac{7tw(D)}{8}\gamma^{(3-tw(D))/4} 
\end{equation}
and that
\begin{equation}
\label{eqn:cR-bound}
c(R) > (tw(D)-1)\left( 1-\frac{7tw(D)}{8}\gamma^{(7-tw(D))/4}\right)
\end{equation}
Since we are assuming that $tw(D)>63$, by
\eqref{eqn:1-d-bound} and \eqref{eqn:cR-bound} $d(D)>\frac{9}{16}$, $c(R)>31$.
Consequently, $X>\frac{1}{28}$ and $Y<\frac{1}{4}$. Thus under the assumption that $tw(D)>63$ the following two inequalities hold.
\begin{equation}
\label{eqn:X}
\frac{7X}{64}\left(1 - \frac{7tw(D)}{8}\gamma^{(3-tw(D))/4}\right)^{-3} \leq \frac{(tw(D)-1)\gamma^{(\btw(D)-7)/4}}{tw(D)}.
\end{equation}
\begin{equation}
\label{eqn:Y}
\frac{7+Y}{4}\frac{7tw(D)}{8}\gamma^{(3-tw(D))/4} \leq \frac{\gamma^{(\btw(D)-7)/4}}{tw(D)}.
\end{equation}
If $R$ is coherent
\begin{align*}
& \frac{7a_2(K)^2-a_2(K)-10a_4(K)}{4v_3(K)} \leq \frac{7X c(D)}{64d(D)^{3}} & (\because \mbox{Lemma } \ref{lemma:coherent})\\
& \quad \leq \frac{7X}{64}\left(1 - \frac{7tw(D)}{8}\gamma^{(3-tw(D))/4}\right)^{-3}\!c(D)
& (\because \eqref{eqn:1-d-bound})\\
& \quad \leq \frac{(tw(D)-1)\gamma^{(\btw(D)-7)/4}}{tw(D)}c(D) & (\because \eqref{eqn:X})\\
& \quad \leq \frac{(1-d(D))\gamma^{(\btw(D)-7)/4}}{tw(D)}c(D)^2 & (\because \mbox{Lemma }\ref{lemma:tw-density})\\
& \quad < \frac{1}{2}\det(K)+3g(K)-\frac{5}{2} & (\because \mbox{Proposition }
\ref{prop:det-density})
\end{align*}

Similarly, when $R$ is incoherent then
\begin{align*}
&\frac{7a_2(K)^2-a_2(K)-10a_4(K)}{4v_3(K)} \\
& \quad \leq \frac{7+Y}{4}(1-d(D))^{2}c(D)^{2} & (\because \mbox{Lemma }\ref{lemma:incoherent})\\
& \quad \leq \frac{7+Y}{4}(1-d(D))\frac{7tw(D)}{8}\gamma^{(3-tw(D))/4}c(D)^{2} & (\because \eqref{eqn:1-d-bound})\\
&  \quad\leq \frac{(1-d(D))\gamma^{(\btw(D)-7)/4}}{tw(D)}c(D)^2 & (\because \eqref{eqn:Y})\\
& \quad < \frac{1}{2}\det(K)+3g(K)-\frac{5}{2}& (\because \mbox{Proposition } \ref{prop:det-density})
\end{align*}

Therefore $K$ does not admit chirally cosmetic surgery by Theorem \ref{theorem:obstruction} (ii).
\end{proof}

\section{Alternating knots}
\label{section:alternating}

\subsection{Knot invariant arguments}

At many points, our argument uses that the diagram $D$ is positive. Nevertheless,  we can use similar arguments to exclude chirally cosmetic surgeries for many alternating knots if add additional assumptions.

As a demonstration we observe that when the crossing number is not large compared with twist number (so the density is small) and $v_3(K)\neq 0$, then $K$ has no chirally cosmetic surgery.

\begin{proposition}
\label{proposition:alt}
Let $D$ be a reduced, twist reduced alternating diagram of a knot $K$ such that $4v_3(K)\neq 0$. If $c(D)^4 \leq 3\gamma^{tw(D)-1}$ then $K$ admits no chirally cosmetic surgery.
\end{proposition}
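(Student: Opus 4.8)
The plan is to bound the key obstruction quantity from Theorem \ref{theorem:obstruction}(i), namely $|7a_2(K)^2 - a_2(K) - 10a_4(K)| \big/ |4v_3(K)|$, and show that under the hypothesis $c(D)^4 \leq 3\gamma^{tw(D)-1}$ it stays within the window $\bigl(0, \tfrac12(\det(K) - |\sigma(K)| - 1)\bigr]$. Unlike the main theorem, here $D$ is only alternating (not necessarily positive), so I must use Theorem \ref{theorem:obstruction}(i) rather than (ii). Since $4v_3(K) \neq 0$ by assumption and $4v_3(K) \in \Z$, we have $|4v_3(K)| \geq 1$, which removes the denominator as an obstacle.

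First I would estimate the numerator. The quantity $|7a_2(K)^2 - a_2(K) - 10a_4(K)|$ must be controlled purely in terms of the crossing number $c=c(D)$, using the crossing-number bounds rather than the positivity bounds. By Proposition \ref{proposition:estimate-conway} (or Proposition \ref{proposition:optimal-a_2}) we have $|a_2(K)| \leq \tfrac{c^2}{8}$, and applying Proposition \ref{proposition:estimate-conway} with $n=2$ gives a bound of order $c^4$ for $|a_4(K)|$. The dominant term is $7a_2(K)^2 \leq \tfrac{7}{64}c^4$, so the whole numerator is $O(c^4)$; combined with $|4v_3(K)|\geq 1$, the ratio is bounded by a constant times $c^4$, and I expect the clean bound to take the form
\[
\left| \frac{7a_2(K)^2 - a_2(K) - 10a_4(K)}{4v_3(K)} \right| \leq \frac{1}{2}\gamma^{tw(D)-1}
\]
after plugging in $c^4 \leq 3\gamma^{tw(D)-1}$, where the factor $3$ in the hypothesis is precisely calibrated to absorb the numerical constants coming from the three Conway-coefficient estimates.

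Next I would produce a matching lower bound for $\tfrac12(\det(K)-|\sigma(K)|-1)$. The clean input is Stoimenow's bound \eqref{eqn:st-det}, $\det(K) \geq 2\gamma^{\btw(D)-1}$, which for a twist-reduced diagram reads $\det(K) \geq 2\gamma^{tw(D)-1}$. I must then check that the $|\sigma(K)|+1$ correction is negligible relative to the exponentially large $\det(K)$; since $|\sigma(K)| \leq c(D)$ for any diagram and $c(D)^4 \leq 3\gamma^{tw(D)-1}$ forces $c(D)$ to be small compared to $\gamma^{tw(D)-1}$, the subtracted terms are swamped, giving $\tfrac12(\det(K)-|\sigma(K)|-1) \geq \tfrac12\gamma^{tw(D)-1}$ (perhaps after a mild adjustment of constants). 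Chaining the two bounds places the ratio strictly between $0$ and $\tfrac12(\det(K)-|\sigma(K)|-1)$, so Theorem \ref{theorem:obstruction}(i) applies and $K$ has no chirally cosmetic surgery; strict positivity of the ratio follows from $4v_3(K) \neq 0$ together with the positive-knot lower bound argument or directly from the obstruction's hypothesis.

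The main obstacle I anticipate is controlling the signature term $|\sigma(K)|$ without the positivity assumption that was available in the main theorem (where $\sigma = 2g$ and the $3g - \tfrac52$ slack could be exploited). For a general alternating knot I cannot cleanly relate $\sigma$ to the genus, so I must settle for the crude but sufficient estimate $|\sigma(K)| \leq c(D)$ and verify that the hypothesis $c(D)^4 \leq 3\gamma^{tw(D)-1}$ really does make $c(D)$ exponentially smaller than $\det(K)$. A secondary bookkeeping concern is tracking the numerical constants so that the factor $3$ and the exponent $tw(D)-1$ line up exactly; this is routine but is where an off-by-a-constant error would most plausibly creep in.
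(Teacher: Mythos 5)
Your skeleton is the same as the paper's: apply Theorem \ref{theorem:obstruction}(i), use $|4v_3(K)|\geq 1$ from integrality, bound the numerator via the crossing-number estimates of Proposition \ref{proposition:estimate-conway}, bound $\det(K)$ below by Stoimenow's exponential bound, and control $|\sigma(K)|$ by the crossing number. But your numerical chain breaks at a concrete step: the claimed bound $\bigl|7a_2(K)^2-a_2(K)-10a_4(K)\bigr|\leq \frac12\gamma^{tw(D)-1}$ is not derivable from the estimates you cite. Indeed $7a_2(K)^2\leq \frac{7}{64}c^4$ and $10|a_4(K)|\leq \frac{5}{24}c^4$, and $\frac{7}{64}+\frac{5}{24}=\frac{61}{192}\approx 0.32$, so after substituting $c^4\leq 3\gamma^{tw(D)-1}$ you only obtain roughly $0.95\,\gamma^{tw(D)-1}$ (plus lower-order terms). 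The factor $3$ in the hypothesis is essentially \emph{exhausted} by the sum of constants $\frac18+\frac{5}{24}=\frac13$ (the paper absorbs the linear term via $|a_2|\leq a_2^2$, valid by integrality); there is no factor-of-two slack at all. Since you also weakened the determinant side to $\frac12\gamma^{tw(D)-1}$, the two halves of your chain do not meet: $0.95>0.5$, and the proof as written does not close.

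The repair is precisely the bookkeeping the paper carries out: compare both sides at the level $\gamma^{tw(D)-1}-\frac{c(D)}{2}$ rather than $\frac12\gamma^{tw(D)-1}$. On the numerator side, keep the negative correction $-2\binom{c/2}{4}$ in Proposition \ref{proposition:estimate-conway}, which gives the strict bound $\bigl|7a_2^2-a_2-10a_4\bigr| < \frac13 c(D)^4-\frac{c(D)}{2}\leq \gamma^{tw(D)-1}-\frac{c(D)}{2}$; on the determinant side, use $|\sigma(K)|\leq 2g_3(K)\leq c(D)-1$ together with Theorem \ref{theorem:det-key} (the crude bound \eqref{eqn:st-det} would also do, since the numerator inequality is strict) to get $\frac12\bigl(\det(K)-|\sigma(K)|-1\bigr)> \gamma^{tw(D)-1}-\frac{c(D)}{2}$. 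So the ingredients you assembled do suffice, but the ``mild adjustment of constants'' you defer is not mild --- it is the actual content of the computation: the $-\frac{c}{2}$ correction coming from the signature must be matched exactly by the $-\frac{c}{2}$ extracted from the $a_4$ estimate, because no multiplicative slack survives. Separately, your justification of the strict positivity $0<|\cdot|$ required in Theorem \ref{theorem:obstruction}(i) is not valid: the positive-knot inequalities \eqref{eqn:a2n-positive}, \eqref{eqn:v3-positive} are unavailable for a general alternating knot, and ``directly from the obstruction's hypothesis'' is circular. You should be aware, however, that the paper's own proof is equally silent on this point, so this is a shared omission rather than a place where you fall short of the paper.
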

\begin{proof}
Since $4v_3(K)\neq 0$, $|4v_3(K)|\geq 1$ so 
\begin{align*}
& \left|\frac{7a_2(K)^2-a_2(K)-10a_4(K)}{4v_3(K)}\right|\\
& \qquad \leq |7a_2^{2}(K)| + |a_2(K)| + |10a_4(K)| \\
& \qquad \leq 8|a_2(K)|^2 + 10|a_4(K)| \\
& \qquad \leq \frac{1}{8}c(D)^{4} + \frac{10}{48}c(D)^{4} - 10 \binom{\frac{c(D)}{2}}{4} & (\because \mbox{ Proposition } \ref{proposition:estimate-conway})\\ 
& \qquad < \frac{1}{3}c(D)^{4} - \frac{c(D)}{2} \\
& \qquad \leq \gamma^{tw(D)-1} -\frac{c(D)}{2} & (\because \mbox{Assumption})\\
 \end{align*}
 On the other hand, since $|\sigma(K)| \leq 2g_4(K) \leq 2g_3(K) \leq c(D)-1$, by Theorem \ref{theorem:det-key}
\begin{align*}
\frac{1}{2}\left(\det(K)-|\sigma(K)|-1\right) > \gamma^{tw(D)-1} - \frac{c(D)}{2}.
 \end{align*}
Therefore $K$ does not admit chirally cosmetic surgery by Theorem \ref{theorem:obstruction} (i).
\end{proof}

%For example, when each twist region contains at most $M$ crossings for $M$ and $tw(K)$ is sufficiently large compared with $M$, then $K$ has no chirally cosmetic surgeires.
%Proposition \ref{proposition:alt} if the density is very small (close to $0$) and $tw(K)$ is sufficiently large then $K$ does not admit chirally cosmetic surgeries.

As is clear from the proof, for a general alternating case, $\left|\frac{7a_2(K)^2-a_2(K)-10a_4(K)}{4v_3(K)}\right|$ may have an order of $c(D)^{4}$. So our estimate of $\det(K)$ in Theorem \ref{theorem:det-key} is insufficient.

However, we point out if the density is large (close to $1$) and the maximum twist region $R$ is coherent then we have an estimate analogous to Lemma \ref{lemma:coherent}.

\begin{lemma}
\label{lemma:coherent-alternating}
Let $D$ be an alternating diagram. Assume that the maximum twist region $R$ is coherent, $c(R)>5$ and that $-2(1-d(D))^{3}+d(D)^{3}>0$. Then $v_3(K)\neq 0$ and
\[ \left| \frac{7a_2(K)^2-a_2(K)-10a_4(K)}{4v_3(K)} \right| < \frac{64}{-2(1-d(D))^{3}+d(D)^{3}}c(D) \]
\end{lemma}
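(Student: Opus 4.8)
The plan is to mimic the proof of Lemma \ref{lemma:coherent}, but to replace the lower bound on $4v_3(K)$ coming from a coherent torus-knot sub-diagram with a two-sided estimate that is robust enough to survive the loss of positivity. The key structural input is the same: when the maximum twist region $R$ is coherent and we place the base point $\ast$ near $R$, the arrows of $G_D$ coming from $R$ form a parallel ``fan'' identical to the Gauss diagram of the standard $(2,c'_R)$-torus diagram, where $c'_R$ is the largest odd integer with $c'_R \le c(R)$. This guarantees that $G_D$ contains $G_{T_{2,c'_R}}$ as a sub Gauss diagram, and since the contributions of these coherent parallel arrows to $4v_3$ are (by the Gauss diagram formula of $\mathcal{C}_A + \mathcal{C}_B$) determined entirely by the torus sub-diagram, I expect to extract a lower bound of the shape $|4v_3(K)| \geq \frac{1}{24}(c(R)^3 - \text{lower order})$, hence of order $d(D)^3 c(D)^3$.

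First I would make the torus-knot lower bound precise. Using $v_3(T_{2,c'_R}) = \frac{c'_R(c'_R{}^{\!2}-1)}{24}$ and $c'_R \geq c(R)-1$, together with $c(R) = d(D)c(D) > 5$, I would produce a clean inequality of the form $|4v_3(K)| > \alpha\, d(D)^3 c(D)^3$ for an explicit constant; the term $-2(1-d(D))^3$ in the statement strongly suggests that the contribution of the \emph{other} crossings (of which there are $c(D)-c(R) = (1-d(D))c(D)$) must be subtracted off with the worst-case opposite sign, using Proposition \ref{prop:v3-bounds} applied to the sub-diagram on those crossings. So the refined bound should read $|4v_3(K)| \geq \frac{1}{24}\bigl(d(D)^3 - 2(1-d(D))^3\bigr)c(D)^3$ up to lower-order terms, and the hypothesis $-2(1-d(D))^3 + d(D)^3 > 0$ is exactly what keeps this positive, which in particular yields $v_3(K) \neq 0$.

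Next I would bound the numerator from above. Since positivity is unavailable, I cannot use $a_2, a_4 \geq 0$ or Lemma \ref{lemma:stoimenow-improvement}; instead I would bound each term in absolute value by its crossing-number estimate from Proposition \ref{proposition:estimate-conway}, giving $|7a_2(K)^2 - a_2(K) - 10a_4(K)| \leq 8a_2(K)^2 + 10|a_4(K)|$, which is of order $c(D)^4$ with leading coefficient $\tfrac{7}{64}+\text{(small)}$. Dividing the $O(c(D)^4)$ numerator by the $O(c(D)^3)$ denominator produces the linear-in-$c(D)$ bound, and tracking the explicit constants — the $\tfrac{7}{64}c(D)^4$ from $7a_2^2$ against the $\tfrac{1}{24}(d^3 - 2(1-d)^3)c(D)^3$ denominator — should assemble into the stated constant $\dfrac{64}{-2(1-d(D))^3 + d(D)^3}$ after absorbing the remaining terms into the inequality.

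The main obstacle will be controlling the sign and magnitude of the \emph{cross terms} in $4v_3(K)$: unlike the positive case where one simply discards $\langle \mathcal{C}_A, G_D\rangle \geq 0$, here the interaction between the coherent torus fan and the remaining $(1-d(D))c(D)$ crossings can contribute with either sign to $v_3$, and bounding this interference by the crude $-2(1-d(D))^3 c(D)^3$ term requires care to show no larger cancellation can occur. I would handle this by splitting $G_D$ into the torus sub-diagram and its complement, estimating the complement's total $v_3$-contribution (including mixed terms) via the degree-$3$ finite-type bound of Proposition \ref{prop:v3-bounds} on the relevant crossing count, and checking the arithmetic closes with the constraints $c(R) > 5$ and $tw(D)$ large; the bookkeeping of which mixed Gauss sub-diagrams survive the coherent parallelism is where the proof is most delicate.
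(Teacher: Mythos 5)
Your skeleton matches the paper's: a coherent fan of order $c(R)=d(D)c(D)$ contributes a positive term of size roughly $c(R)^3/48$, the rest of the diagram is charged at worst $-c^3/24$ on its own crossing count, the hypothesis $-2(1-d)^3+d^3>0$ is exactly what keeps the sum positive (hence $v_3(K)\neq 0$), and the numerator is bounded by $8a_2^2+10|a_4|<\tfrac13 c(D)^4$ via Proposition \ref{proposition:estimate-conway}, so the quotient is linear in $c(D)$. But the step you yourself flag as the main obstacle is where your proposed tool does not apply, and this is a genuine gap. Proposition \ref{prop:v3-bounds} is a statement about the knot invariant $4v_3(K)$ in terms of the crossing number of a \emph{knot}; it says nothing about a partial sum of Gauss-diagram pairings. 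The quantity you want to control --- ``the complement's total $v_3$-contribution (including mixed terms)'' --- is not the $4v_3$ of any knot: if you delete all the fan arrows from $G_D$, the remaining arrows are not even the Gauss diagram of a knot diagram (emptying a coherent twist region of a knot diagram produces a two-component link diagram), and the mixed sub-diagrams, which pair one or two fan arrows with outside arrows of either sign against $\mathcal{C}_A$, are a priori of size $O(c^3)$ with constants like $d^2(1-d)$ that are \emph{not} dominated by $d^3-2(1-d)^3$, so no crude count of them can close the argument with the stated constant.

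The paper resolves precisely this point with the base diagram $D_{base}$ introduced in the proof of Lemma \ref{lemma:incoherent}: an honest alternating knot diagram that agrees with $D$ outside $R$ and keeps the minimal number of crossings inside $R$. Since $G_{D_{base}}$ is a genuine sub-Gauss diagram of $G_D$, the pairing restricted to its arrows is exactly $4v_3(K_{base})$, a knot invariant to which Proposition \ref{prop:v3-bounds} legitimately applies, giving
\begin{equation*}
4v_3(K_{base}) \geq -\frac{c(D_{base})^3}{24} \approx -\frac{(1-d(D))^3c(D)^3}{24},
\end{equation*}
which is the source of the factor $-2(1-d)^3$ over the common denominator $48$. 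The remaining sub-diagrams (pure fan and mixed) are then bounded \emph{below} by $4v_3(T_{2,c'_R})>c(R)^3/48$ by a counting argument as in Lemma \ref{lemma:coherent}; for this sign argument to work one must first take a mirror image so that the twist region $R$ consists of positive crossings --- a normalization you omit, but which is what forces the mixed contributions against $\mathcal{C}_B$ (and, via the coherence of the fan, against $\mathcal{C}_A$) to help rather than hurt. Without the $D_{base}$ device, or some equivalent way of realizing the complement as a knot and isolating the mixed terms on the favorable side, your plan cannot be completed as written.
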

\begin{proof}
If necessary, by taking the mirror image we may assume that the maximum twist region consists of positive crossings. 
Let $D_{base}$ be the base diagram as taken in the proof of Lemma \ref{lemma:incoherent}.
Then by a similar argument as Lemma \ref{lemma:coherent}, from by counting the sub-Gauss diagram that comes from arrows in $D_{base}$ and arrows in the coherent twist region $R$ we get
\begin{align*}
4v_3(K) & \geq 4v_3(K_{base}) +  v_3(T_{2,c'_R}) \\
& \geq 4v_3(K_{base}) +  \frac{(c(R)-1)(c(R)^{2}-2c(R))}{24} \\
& >   4v_3(K_{base}) + \frac{c(R)^3}{48} & (\because c(R)>5)\footnotemark \\
& \geq -\frac{c(D_{base})^{3}}{24} +   \frac{c(R)^{3}}{48} & (\because \mbox{ Proposition } \ref{prop:v3-bounds})\\
& \geq \frac{-2(1-d(D))^{3}+d(D)^{3}}{48}c(D)^{3} \  (>0)
\end{align*}

Therefore
\begin{align*}
 \left|\frac{7a_2(K)^2-a_2(K)-10a_4(K)}{4v_3(K)}\right| & \leq \frac{1}{4v_3(K)}\left(|7a_2^{2}(K)| + |a_2(K)| + |10a_4(K)|\right) \\
 & \leq \frac{1}{4v_3(K)} \left( 8|a_2(K)|^2 + 10|a_4(K)| \right)\\
 & < \frac{1}{4v_3(K)}\left(\frac{1}{3}c(D)^{4}\right)\\
 & \leq \frac{64}{-2(1-d(D))^{3}+d(D)^{3}}c(D)
 \end{align*}
\end{proof}
\footnotetext{
Here we use quite crude estimate. By a slightly more careful estimate as we did in Lemma \ref{lemma:coherent}, one can improve the estimate.}

Using this estimate instead of Lemma \ref{lemma:coherent}, a similar argument shows that when both $tw(D)$ and $d(D)$ are sufficiently large, then $K$ has no chirally cosmetic surgeries. Here we give one concrete sufficient condition. 

\begin{corollary}
\label{cor:alternating}
Let $D$ be a reduced, twist-reduced diagram of an alternating knot $K$.
If the maximum twist region $R$ of $D$ is coherent, $4v_3(K) \neq 0$, $tw(D)>19$, and $d(K)>\frac{2}{3}$, then $K$ does not admit chirally cosmetic surgeries. 
\end{corollary}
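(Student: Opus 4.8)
The plan is to mimic the structure of the proof of Theorem~\ref{theorem:main}, but using Lemma~\ref{lemma:coherent-alternating} in place of Lemma~\ref{lemma:coherent} to control the finite type invariant ratio, and Proposition~\ref{proposition:alt} to dispose of the low-density regime. First I would observe that the two hypotheses $tw(D)>19$ and $d(K)>\frac{2}{3}$ should be combined with the basic estimates of Lemma~\ref{lemma:tw-density} to pin down the relevant quantities. Since $d(D)>\frac{2}{3}$, the quantity $-2(1-d(D))^3+d(D)^3$ is bounded below by a positive constant (at $d=\frac23$ it equals $-2\cdot\frac{1}{27}+\frac{8}{27}=\frac{6}{27}=\frac29>0$, and it is increasing in $d$), so Lemma~\ref{lemma:coherent-alternating} applies provided $c(R)>5$; the latter follows because $d(D)c(D)=c(R)$ and $c(D)$ is large once $tw(D)>19$. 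This gives $v_3(K)\neq 0$ automatically and yields a bound of the form $\left|\frac{7a_2(K)^2-a_2(K)-10a_4(K)}{4v_3(K)}\right| < C\, c(D)$ for an explicit constant $C$ depending only on the lower bound for $d(D)$.

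Next I would bound the right-hand side of the obstruction in Theorem~\ref{theorem:obstruction}~(i) from below. Using Proposition~\ref{prop:det-density}, which requires $\btw(D)>1$ (guaranteed since $tw(D)>19$), I get $\det(K)\geq 2\frac{(1-d(D))\gamma^{(\btw(D)-7)/4}}{tw(D)}c(D)^2$. I would then invoke Lemma~\ref{lemma:tw-density} to replace $1-d(D)$ by $\frac{tw(D)-1}{c(D)}$, turning the determinant bound into something linear in $c(D)$ times an exponentially growing factor $\gamma^{(\btw(D)-7)/4}/tw(D)$. Because $\sigma(K)$ and the $\pm 1$ terms in Theorem~\ref{theorem:obstruction}~(i) are at most linear in $c(D)$ (indeed $|\sigma(K)|\leq c(D)-1$), they are absorbed by the quadratic determinant bound.

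The heart of the argument is then the numerical comparison: I must show
\[ C\, c(D) \leq \frac{1}{2}\bigl(\det(K)-|\sigma(K)|-1\bigr), \]
which reduces, after the substitutions above, to an inequality asserting that the exponential factor $\gamma^{(\btw(D)-7)/4}$ dominates the fixed constant $C$ once $tw(D)>19$. Concretely I expect to verify that for $tw(D)\geq 20$ and $d(D)>\frac23$ we have $\frac{\gamma^{(\btw(D)-7)/4}}{tw(D)} \cdot (tw(D)-1) \geq C$, which holds because the left side grows like $\gamma^{\btw(D)/4}$ while $C$ is a constant. The density lower bound $d(D)>\frac23$ also needs to be propagated from the hypothesis on $d(K)$ down to $d(D)$ for the chosen minimal diagram; since $tw(D)=\btw(D)=tw(K)$ for the optimal diagram, $d(D)$ is controlled via $c(R)/c(D)$ and the twist-reducedness.

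The main obstacle I anticipate is the bookkeeping in the numerical threshold: unlike the positive case, here the ratio bound $C\,c(D)$ carries the awkward denominator $-2(1-d)^3+d^3$, so one must check that the chosen thresholds $tw(D)>19$ and $d(K)>\frac23$ are exactly compatible with making $\gamma^{(\btw(D)-7)/4}/tw(D)$ exceed $C$ — and in particular that the exponent $(\btw(D)-7)/4$ is already positive and large enough at $\btw(D)=20$ to beat the constant. I would also need to handle carefully the requirement $c(R)>5$ and confirm that the low-density case (where the coherent-region estimate fails or $d(D)$ is not genuinely large) is covered by Proposition~\ref{proposition:alt} under the stated hypotheses, so that no intermediate density range is left untreated.
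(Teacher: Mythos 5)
Your overall strategy---bound $\left|\frac{7a_2(K)^2-a_2(K)-10a_4(K)}{4v_3(K)}\right|$ via Lemma~\ref{lemma:coherent-alternating}, bound $\frac{1}{2}(\det(K)-|\sigma(K)|-1)$ from below, and invoke Theorem~\ref{theorem:obstruction}~(i)---is the same as the paper's, and your preliminary checks (positivity of $-2(1-d)^3+d^3$ for $d>\frac23$, and $c(R)=d(D)c(D)>5$ since $c(D)\geq tw(D)\geq 20$) are correct, indeed more careful than the paper, which leaves $c(R)>5$ implicit. The genuine gap is in the determinant estimate and the numerical comparison built on it. You route the determinant through Proposition~\ref{prop:det-density} combined with Lemma~\ref{lemma:tw-density}, which gives $\det(K)\geq 2\frac{(tw(D)-1)}{tw(D)}\gamma^{(tw(D)-7)/4}c(D)$, i.e.\ exponent $(tw(D)-7)/4$. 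The paper instead applies Theorem~\ref{theorem:det-key} directly, getting $\det(K)>2\gamma^{-1}\left(\gamma^{\btw(D)}+(c(D)-\btw(D))\gamma^{(\btw(D)-1)/2}\right)>2c(D)\gamma^{(\btw(D)-3)/2}$, i.e.\ exponent $(tw(D)-3)/2$, roughly twice yours. This difference is fatal at the stated threshold: at $tw(D)=20$ your factor is $\frac{19}{20}\gamma^{13/4}\approx 3.0$, whereas the constant from Lemma~\ref{lemma:coherent-alternating} satisfies $\frac{64}{-2(1-d(D))^3+d(D)^3}\geq 64$ for all $d(D)\leq 1$, and is close to $288$ for $d(D)$ near $\frac23$. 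So your claimed verification that $\frac{(tw(D)-1)\gamma^{(tw(D)-7)/4}}{tw(D)}$ exceeds this constant once $tw(D)\geq 20$ is false by more than an order of magnitude; the asymptotic growth of $\gamma^{tw/4}$ is irrelevant because the threshold $19$ is fixed, and with your bound one would need $tw(D)$ roughly $70$ or more to close the argument.

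Two further points. First, your plan to handle a ``low-density regime'' with Proposition~\ref{proposition:alt} is both unnecessary and unavailable: the hypothesis $d>\frac23$ already places the diagram in the high-density coherent case (the paper's proof has no case division at all), and Proposition~\ref{proposition:alt} requires the extra hypothesis $c(D)^4\leq 3\gamma^{tw(D)-1}$, which is not implied by the corollary's assumptions. Second, your worry about propagating ``$d(K)>\frac23$'' to $d(D)$ is a non-issue: the paper defines the density only for diagrams, so the hypothesis is simply to be read as $d(D)>\frac23$ for the given diagram $D$. I should add, in fairness to your stated ``main obstacle,'' that the constant bookkeeping really is the crux here: even the paper's own final step, asserting $\frac{128}{9}>\frac{64}{-2(1-d(D))^3+d(D)^3}$ for $d(D)>\frac23$, looks arithmetically suspect (the right-hand side is near $288$ when $d(D)$ is near $\frac23$, since $64\div\frac29=288$, not $64\cdot\frac29$), so the threshold $tw(D)>19$ is delicate even with the stronger determinant bound. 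But with your weaker bound the comparison fails decisively rather than marginally, so the proposal as written does not prove the corollary.
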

\begin{proof}
By Theorem \ref{theorem:det-key}, when $tw(K)>19$ then 
\[ \det(L) > 2\gamma^{-1}\left(\gamma^{\btw(D)} + (c(D)-\btw(D))\gamma^{(\btw(D)-1)/2} \right) > 2c(D)\gamma^{(\btw(D)-3)/2}. \]
Thus
\begin{align*}
\frac{1}{2}\left(\det(K)-|\sigma(K)|-1\right) & > \frac{1}{2}\det(K)-\frac{1}{2}c(D) \\
& > \left( \gamma^{(tw(D)-3)/2}-\frac{1}{2} \right)c(D) \\
& > \left(\gamma^{8}-\frac{1}{2}\right)c(D)  >  \frac{128}{9}c(D) & (\because tw(D)>19)\\
& >  \frac{64}{-2(1-d(D))^{3}+d(D)^{3}}c(D) & (\because d(D)>\frac{2}{3})\\
& >  \left|\frac{7a_2(K)^2-a_2(K)-10a_4(K)}{4v_3(K)}\right| & (\because \mbox{Lemma } \ref{lemma:coherent})
\end{align*}
Therefore $K$ does not admit chirally cosmetic surgery by Theorem \ref{theorem:obstruction} (i). 
\end{proof}
%For the case the maximum density is large and the maximum twist region is coherent we refer to \cite[Theorem 6.4]{iis1} which leads to similar but weaker non-existence result.

\subsection{Discussion and comparison with hyperbolic geometry argument}

We close the paper by a short discussion and comparison of an alternative approach based on hyperbolic geometry.

For a slope $s$ of hyperbolic knot $K$, let $L(s)$ be the \emph{normalized length} defined by  $L(s) = \frac{\ell(s)}{\sqrt{\mathrm{Area}(\partial C)}}$, where $\partial C$ is a cusp torus and $\ell(s)$ is the euclidian length of $s$. 
Let $\mathrm{Sys}(E(K))$ be the systole length of the knot complement $E(K)$, the length of the shortest closed geodesic. 

In \cite{bhw} it is pointed out that Thurston's hyperbolic Dehn surgery theorem and Mostov rigidity imply that $s$ and $s'$ surgeries on non-amphicheiral hyperbolic knot $K$ cannot be chirally cosmetic whenever the length of slopes $s$ and $s'$ are sufficiently large. In \cite{fps} they proved the following effective version of this criterion.

\begin{theorem}\cite[Theorem 7.30]{fps}
\label{theorem:hyperbolic-criterion}
Let $K$ be a hyperbolic knot and $s,s'$ be different slopes. If
%\begin{equation}
%\label{eqn:fps}
\[ L(s),L(s') \geq \max\left\{10.1, \sqrt{\frac{2\pi}{\mathrm{Sys}(E(K))} + 58}\right\}, \]
%\end{equation} 
then $s$ and $s'$ surgeries are not chirally cosmetic, unless $s=-s'$ and $K$ is amphichiral. 
\end{theorem}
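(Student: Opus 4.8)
The plan is to make quantitative the soft argument of \cite{bhw}, which combines Thurston's hyperbolic Dehn surgery theorem with Mostow--Prasad rigidity, by feeding in the effective cone-deformation estimates of Hodgson--Kerckhoff. First I would assume, toward a contradiction, that $s$ and $s'$ surgeries are chirally cosmetic while we are \emph{not} in the exceptional situation, i.e. it is false that ($s=-s'$ and $K$ is amphichiral); so there is an orientation-reversing homeomorphism $h\colon K(s)\to K(s')$. Because the normalized lengths satisfy $L(s),L(s')\geq 10.1$, both fillings are hyperbolic, and the filling cores $\gamma_s\subset K(s)$ and $\gamma_{s'}\subset K(s')$ are closed geodesics whose lengths are controlled from above by essentially $2\pi/L(s)^2$ and $2\pi/L(s')^2$ (Neumann--Zagier, made rigorous by Hodgson--Kerckhoff). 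By Mostow--Prasad rigidity, $h$ is isotopic to an orientation-reversing isometry $\phi\colon K(s)\to K(s')$.

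The crux is to force $\phi$ to carry $\gamma_s$ to $\gamma_{s'}$. This is precisely where the threshold $\sqrt{2\pi/\mathrm{Sys}(E(K))+58}$ is engineered to enter: it is chosen so that the core length $2\pi/L(s)^2$, together with the error terms arising from the bilipschitz comparison between the filled and unfilled metrics, stays strictly below $\mathrm{Sys}(E(K))$. Every closed geodesic of $K(s)$ other than $\gamma_s$ either persists from $E(K)$, hence has length at least $\mathrm{Sys}(E(K))$ up to the same controlled distortion, or is longer still. Thus $\gamma_s$ is the \emph{unique} shortest geodesic (the systole) of $K(s)$, and likewise $\gamma_{s'}$ in $K(s')$. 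Any isometry preserves systoles, so necessarily $\phi(\gamma_s)=\gamma_{s'}$.

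Removing $\phi$-invariant tubular neighborhoods of these cores, the isometry restricts to an orientation-reversing isometry $\bar\phi\colon E(K)\to E(K)$ of the knot exterior. On the cusp torus $\partial E(K)$ this map is affine and orientation reversing; since it must fix the meridian up to sign and send the filling slope $s$ to $s'$, tracking the induced action on $H_1(\partial E(K))$ shows that $\bar\phi$ exhibits $K$ as amphichiral and forces $s'=-s$. This is exactly the excluded exceptional case, yielding the desired contradiction.

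I expect the main obstacle to be the second step: making the uniqueness-of-systole comparison genuinely effective. The bare Thurston-plus-Mostow argument only produces some unspecified length threshold; extracting the explicit constants $10.1$ and $58$ requires the quantitative Hodgson--Kerckhoff bounds that simultaneously control the core geodesic length and the global bilipschitz distortion of the metric under filling, followed by a careful check that the shortest surviving geodesic really does exceed the core. The secondary difficulty is the orientation bookkeeping on the cusp, ensuring that ``orientation-reversing isometry matching the two filling slopes'' cleanly collapses to the single case $s=-s'$ with $K$ amphichiral.
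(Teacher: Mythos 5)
This statement is not proved in the paper at all: it is imported verbatim as \cite[Theorem 7.30]{fps}, so there is no internal proof to compare your attempt against. Judged against the actual argument in Futer--Purcell--Schleimer, your outline reconstructs the correct strategy: effective Dehn filling makes the two core geodesics the unique systoles of the filled manifolds, Mostow rigidity upgrades the orientation-reversing homeomorphism to an isometry, uniqueness of the systole forces core to go to core, and drilling yields an orientation-reversing self-homeomorphism of $E(K)$ carrying $s$ to $s'$, which collapses to the case $s'=-s$ with $K$ amphichiral. Two caveats are worth flagging. First, the step you correctly identify as the main obstacle is genuinely where the theorem lives: the constants $10.1$ and $58$ do not come from quoting Hodgson--Kerckhoff off the shelf; Futer--Purcell--Schleimer had to build new effective bilipschitz drilling/filling estimates (via cone deformations) precisely because the earlier bounds were not strong enough to run the uniqueness-of-systole comparison with explicit constants, so your sketch defers rather than supplies the quantitative core. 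Second, your final step silently uses that any self-homeomorphism of a knot exterior in $S^3$ preserves the meridian up to sign; for a hyperbolic (hence nontrivial) knot this follows from the Gordon--Luecke theorem, and it should be cited, since without it the ``orientation bookkeeping'' on $H_1(\partial E(K))$ does not by itself single out the meridian and hence cannot conclude amphichirality of $K$ or $s'=-s$.
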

Furthermore, they gave an explicit and computable finite set of candidates  of chirally cosmetic surgeries that allows us to test whether a given hyperbolic knot admit chirally cosmetic surgeries or not (see \cite[Theorem 1.13]{fps} for details).

On the other hand, for hyperbolic alternating knots, the cusp area is bounded below by the twist number.

\begin{theorem}\cite[Theorem 1.1]{lp}
\label{theorem:cusp-area}
For a prime alternating knot $K$ other than $(2,p)$ torus knots, the area of the maximal cusp $C$ of $E(K)$ satisfies 
%\begin{equation}
%\label{eqn:area} 
\[ \mathrm{Area}(\partial C) > A(tw(K)-2)\]
%\end{equation}
where $A$ is some constant, $A>2.278 \times 10^{-19}$.
\end{theorem}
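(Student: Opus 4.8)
The plan is to reduce the problem to a rigid, combinatorially controlled geometric model via fully augmented links, and then transfer a purely combinatorial lower bound through Dehn filling. First I would dispose of the degenerate cases: a prime alternating diagram that is not that of a $(2,p)$-torus knot represents a hyperbolic knot (Menasco), so $E(K)$ carries a unique complete finite-volume hyperbolic structure and the maximal cusp $C$ is well defined. Fix a reduced, twist-reduced alternating diagram $D$ with $tw(D)=tw(K)=:t$ twist regions. I would also record the standard fact that for a one-cusped manifold the volume of the maximal embedded horoball neighborhood equals half the Euclidean area of its boundary torus, so a lower bound on $\mathrm{Area}(\partial C)$ is equivalent to a lower bound on the cusp volume; I will work with whichever is more convenient.

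The model is built as follows. Form the fully augmented link $L$ by inserting a crossing circle around each of the $t$ twist regions of $D$ and removing all full twists, so that $K$ is recovered from $L$ by a $1/n_i$-Dehn filling on the $i$-th crossing circle. The complement $S^3\setminus L$ has an explicit complete hyperbolic structure: it decomposes into two ideal polyhedra all of whose dihedral angles are right angles, the crossing-circle cusps are totally geodesic thrice-punctured-sphere cusps with rigid and explicitly known Euclidean geometry, and the cusp corresponding to the knot strand is tiled by a definite number of Euclidean pieces, one family for each augmented twist region. Consequently the area of the knot cusp of $S^3\setminus L$ is bounded below by a linear function of $t$. This is the combinatorial input, and it is completely explicit.

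The transfer step is to recover $K$ by performing the $t$ Dehn fillings on the crossing-circle cusps while controlling how the knot-cusp area degrades. Because the crossing-circle cusps have uniformly sized rigid geometry, the filling slopes have large normalized length, uniformly in $t$; Thurston's hyperbolic Dehn surgery theorem then guarantees that the filled manifold is hyperbolic, and quantitative deformation estimates (the cone-manifold bounds of Hodgson--Kerckhoff, or effective Neumann--Zagier asymptotics) bound the change of the knot cusp's shape and area by a definite multiplicative factor. Hence the knot-cusp area of $E(K)$ remains bounded below by a linear function of $t$, i.e. $\mathrm{Area}(\partial C)>A(t-2)$ for an explicit constant $A$.

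The main obstacle is precisely this transfer step. Thurston's theorem alone is non-effective, so one must feed in quantitative Dehn-filling estimates and verify that the filling slopes are long enough \emph{uniformly in the number $t$ of simultaneous fillings}; the difficulty is that one fills arbitrarily many cusps at once and must ensure the geometric distortion does not accumulate as $t$ grows, while simultaneously keeping the maximal horoball neighborhood embedded. Pushing the effective deformation bounds through all $t$ fillings at once is what makes the argument delicate, and the several lossy comparisons in this step are exactly why the resulting constant $A>2.278\times 10^{-19}$ is astronomically far from optimal.
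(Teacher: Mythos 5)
There is a genuine gap in your transfer step, and it is fatal to the approach. When $K$ is recovered from the fully augmented link $L$ by $1/n_i$-filling the crossing circles, the integer $n_i$ is the number of full twists removed from the $i$-th twist region, so $n_i=\lfloor c_i/2\rfloor$ where $c_i$ is the number of crossings in that region. A twist region with one or two crossings gives $n_i\in\{0,1\}$, and the corresponding filling slope is the meridian or the $1/1$-slope of that crossing-circle cusp; such slopes have small, bounded normalized length no matter how rigid the geometry of $S^3\setminus L$ is (the normalized length of the slope $1/n_i$ grows with $n_i$, which is exactly why rigidity does not help when $n_i$ is small). Hence your claim that ``the filling slopes have large normalized length, uniformly in $t$'' is false, and it fails in the generic case: an alternating diagram may consist entirely of single-crossing twist regions. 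Every effective Dehn-filling theorem you could invoke has a hypothesis that then breaks down: Hodgson--Kerckhoff cone-deformation theory needs normalized length at least $7.515$, \cite[Theorem 7.30]{fps} needs at least $10.1$, and the augmented-link volume bounds of Futer--Kalfagianni--Purcell \cite{fkp} require every twist region to have at least $7$ crossings. This is precisely why those filling-based results carry per-twist-region hypotheses, while Theorem \ref{theorem:cusp-area} does not. Moreover, even where all slopes are long, Hodgson--Kerckhoff/Neumann--Zagier estimates control volume change rather than the shape and area of the surviving knot cusp under $t$ simultaneous fillings; the bilipschitz control of cusps needed for that is the content of \cite{fps}, which postdates the theorem being proved and still requires long slopes.

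The actual proof of Lackenby--Purcell avoids Dehn-filling estimates altogether. They construct two \emph{twisted checkerboard surfaces} from the alternating diagram, spanning surfaces whose Euler characteristics are bounded in absolute value by a linear function of $tw(K)$, and prove in a companion paper (by combinatorial/normal-surface arguments in a polyhedral decomposition associated to the augmented diagram) that these surfaces are $\pi_1$-essential; the augmentation enters only combinatorially there, never through geometric deformation estimates. The geometric step is a boundary-length argument on the maximal cusp: an essential surface $S$ can be pleated, so its boundary on $\partial C$ has length at most $6|\chi(S)|$, and an analysis of how the boundary curves of the two surfaces intersect on the flat torus $\partial C$ (many intersection points, uniformly separated) forces $\mathrm{Area}(\partial C)$ to grow linearly in $tw(K)$. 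The astronomically small constant $A$ comes from those pleated-surface constants, not from filling distortion. Your strategy, if pushed through, can only yield the weaker statement in which every twist region is assumed to have many crossings; to prove the theorem as stated one needs the essential-surface mechanism (or some other filling-free argument), which is the key idea missing from your proposal.
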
 
Since for every non-meridional slope $s$, $\mathrm{Area}(\partial C) \leq 3 \ell(s)$ (see \cite{lp}) the theorem leads to a lower bound of normalized length in terms of the twist numbers
%\begin{equation}
%\label{eqn:length} 
\[ L(s) =  \frac{\ell(s)}{\sqrt{\mathrm{Area}(\partial C)}} > \frac{\sqrt{\mathrm{Area}(\partial C)}}{3} > \frac{\sqrt{A(tw(K)-2)}}{3} \]
%\end{equation}
 
Thus, Theorem \ref{theorem:hyperbolic-criterion} and Theorem \ref{theorem:cusp-area} show that non-amphicheiral alternating knots have no chirally cosmetic surgeries, as long as $tw(K)$ is large and $\mathrm{Sys}(E(K))$ is not small. 

\begin{corollary}
\label{cor:hyperbolic}
Let $K$ be a prime alternating knot $K$. If $tw(K)>4 \times 10^{21}$ and $\mathrm{Sys}(E(K))>0.15$, then $K$ has no chirally cosmetic surgeries unless $K$ is amphichieral. 
\end{corollary}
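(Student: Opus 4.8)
The plan is to feed the cusp-area estimate of Theorem \ref{theorem:cusp-area} into the effective chirally-cosmetic criterion of Theorem \ref{theorem:hyperbolic-criterion}, so that the entire statement reduces to a numerical check that the hypotheses $tw(K) > 4 \times 10^{21}$ and $\mathrm{Sys}(E(K)) > 0.15$ simultaneously clear the two thresholds appearing in Theorem \ref{theorem:hyperbolic-criterion}.

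First I would confirm that $K$ is hyperbolic, so that both cited theorems apply. Since $K$ is prime and alternating, Menasco's classification shows $K$ is either a $(2,p)$-torus knot or hyperbolic; as a $(2,p)$-torus knot has $tw(K)=1$, the hypothesis $tw(K) > 4 \times 10^{21}$ forces $K$ to be hyperbolic. In particular $K$ is not a $(2,p)$-torus knot, so Theorem \ref{theorem:cusp-area} gives $\mathrm{Area}(\partial C) > A(tw(K)-2)$ with $A > 2.278 \times 10^{-19}$. Now let $s,s'$ be the slopes of a putative chirally cosmetic pair. Both slopes are nontrivial, hence non-meridional, so the inequality $\mathrm{Area}(\partial C) \leq 3\ell(s)$ is available and yields the lower bound on normalized length already recorded in the text,
\[ L(s) = \frac{\ell(s)}{\sqrt{\mathrm{Area}(\partial C)}} \geq \frac{\sqrt{\mathrm{Area}(\partial C)}}{3} > \frac{\sqrt{A(tw(K)-2)}}{3}, \]
and likewise for $s'$.

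It then remains to verify the numerology. From $\mathrm{Sys}(E(K)) > 0.15$ one computes $\sqrt{2\pi/\mathrm{Sys}(E(K)) + 58} < \sqrt{2\pi/0.15 + 58} \approx 9.99 < 10.1$, so the threshold $\max\{10.1, \sqrt{2\pi/\mathrm{Sys}(E(K)) + 58}\}$ in Theorem \ref{theorem:hyperbolic-criterion} is exactly $10.1$. On the other hand, the displayed bound together with $A > 2.278 \times 10^{-19}$ and $tw(K) > 4 \times 10^{21}$ gives $L(s), L(s') \geq 10.1$ after a direct computation (the enormous size of the threshold $4 \times 10^{21}$ being dictated precisely by the minuteness of $A$). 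Thus the hypothesis of Theorem \ref{theorem:hyperbolic-criterion} is met, and that theorem shows $s$ and $s'$ surgeries are not chirally cosmetic unless $s = -s'$ and $K$ is amphichiral; since we exclude amphichiral $K$, the argument is complete.

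The proof carries no real conceptual obstacle: it is an immediate concatenation of the two quoted geometric inputs. The only points demanding care are the appeal to Menasco's theorem to secure hyperbolicity (which also rules out the torus-knot exception in Theorem \ref{theorem:cusp-area}), the observation that surgery slopes of a chirally cosmetic pair are non-meridional so that the comparison $\mathrm{Area}(\partial C) \leq 3\ell(s)$ is legitimate, and the numerical bookkeeping confirming that the stated thresholds on $tw(K)$ and $\mathrm{Sys}(E(K))$ clear both bars at once.
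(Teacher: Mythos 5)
Your proposal follows exactly the route the paper itself takes (the paper's ``proof'' of this corollary is nothing more than the discussion immediately preceding it): secure hyperbolicity, feed Theorem \ref{theorem:cusp-area} and the inequality $\mathrm{Area}(\partial C)\le 3\ell(s)$ into the normalized-length bound $L(s)>\sqrt{A(tw(K)-2)}/3$, and check that $\mathrm{Sys}(E(K))>0.15$ pins the threshold of Theorem \ref{theorem:hyperbolic-criterion} at $10.1$, since $\sqrt{2\pi/0.15+58}\approx 9.99<10.1$. Your handling of hyperbolicity via Menasco and of the non-meridional hypothesis is, if anything, more explicit than the paper's.

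One caveat concerns the step you dismiss as ``a direct computation'': with the quoted constants it does not quite close. From $A>2.278\times 10^{-19}$ and $tw(K)>4\times 10^{21}$ one gets
\[
L(s) \;>\; \frac{\sqrt{A\,(tw(K)-2)}}{3} \;>\; \frac{\sqrt{2.278\times 10^{-19}\cdot\bigl(4\times 10^{21}-2\bigr)}}{3} \;\approx\; \frac{\sqrt{911.2}}{3} \;\approx\; 10.06,
\]
which falls just short of the required $10.1$; to have $\sqrt{A(tw(K)-2)}/3\ge 10.1$ one needs $tw(K)\ge 2+9(10.1)^2/A\approx 4.03\times 10^{21}$. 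This shortfall is not a defect of your method but of the constant in the corollary itself: the paper evidently rounded $4.03\times 10^{21}$ down to $4\times 10^{21}$. So your argument is the paper's argument; to make the numerology literally correct one should either raise the hypothesis to, say, $tw(K)>4.1\times 10^{21}$, or note explicitly that the stated threshold involves this harmless rounding rather than assert that the computation clears $10.1$ as written.
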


Compared with Theorem \ref{theorem:main}, the required twist number is very large, due to the smallness of the constant $A$ in Theorem \ref{theorem:cusp-area}.
The constant $A$ can be improved when we add additional assumptions.
For example, when we assume that each twist region has at most $N$ crossings, then the constant $A$ can be taken as $\frac{1.844 \times 10^{-4}}{3N-1}$ \cite[Theorem 2.9]{lp}. 
Similarly, if we further assume that $K$ is a two-bridge knot, then the constant $A$ can be taken $\frac{8\sqrt{3}}{147} = 9.426 \times 10^{-2}$ \cite[Theorem 4.8]{fkp}. Thus for hyperbolic two-bridge knot case, Corollary \ref{cor:hyperbolic} holds under the weaker condition that $tw(K)>9800$ (and $\mathrm{Sys}(E(K))>0.15$).

We emphasize that the assumption on systole is crucial since $\mathrm{Sys}(E(K))$ can be arbitrary small even if we assume that $tw(K)$ is large -- the length of a crossing circle, a circle enclosing two strands of the twist region $R$ tends to zero as the crossing number $c(R)$ grows. 

This makes a sharp contrast with Corollary \ref{cor:alternating} which treats the case where the crossings are concentrated in a single crossing region (so the systole is arbitrary small), though we need several additional assumptions. Thus the hyperbolic geometry method and our knot invariant constraint method have quite different features and range of applicabilities.

Unfortunately, both methods require additional assumptions, so extending Theorem \ref{theorem:main} for general alternating knots requires more effective new constraints, or, substantial refinements of current arguments and estimates.

\end{document}